\DeclareMathOperator{\PG}{PG}
\DeclareMathOperator{\AG}{AG}
\newcommand{\qbin}[2]{\genfrac{[}{]}{0pt}{}{#1}{#2}}
\newtheorem{definition}{Definition}[section]
\newtheorem{theorem}[definition]{Theorem}
\newtheorem{lemma}[definition]{Lemma}
\newtheorem{remark}[definition]{Remark}
\newtheorem{example}[definition]{Example}
\title{Hilton-Milner results in projective and affine spaces}
\date{}
\author{Jozefien D'haeseleer\thanks{Department of Mathematics: Analysis, Logic and Discrete Mathematics, Ghent University, Krijgslaan 281, Building S8, 9000 Gent, Flanders, Belgium}}
\begin{document}
\maketitle
 
 \begin{abstract}
 	In this article, we analyse  maximal sets of $k$-spaces,  in $\PG(n,q)$ and  $\AG(n,q)$, {$n>2k+t+2$},  that pairwise meet in at least a $t$-space. 
 	It is known that for both $\PG(n,q)$ and $\AG(n,q)$, the largest example is a \emph{$t$-pencil}, i.e. the set of all $k$-spaces containing a fixed $t$-space (see \cite[Theorem 1]{frankl} and \cite[Theorem 1.4]{GuoXu}).  In this paper, we analyse the structure of the second largest maximal example in both $\PG(n,q)$ and $\AG(n,q)$. 
 \end{abstract}

\textbf{Keywords}: $t$-intersecting family, projective spaces, affine spaces, Erd{\H{o}}s-Ko-Rado sets, Hilton-Milner sets.
\par 
\textbf{MSC 2010 codes}:  05B25,  51E20, 51E30.
 
\section{Introduction}
\emph{Before
	 we start with the introduction, we would like to indicate how this paper came about. we started investigating the Hilton-Milner problem in the affine context: we studied the second largest examples of sets of affine $k$-spaces pairwise intersecting in at least a $t$-space in $\AG(n,q)$. Thanks to Prof. Tam\'as Sz\H{o}nyi, we received notes of David Ellis about the projective analogue of this problem \cite{ellis}. In these notes, he studied the second largest families of projective $k$-spaces, pairwise intersecting in at least a $t$-space in $\PG(n,q)$. These notes helped me to shorten my, affine,  arguments. Since these notes are not published, we integrate them in this article. The results that are mostly influenced by the ideas in the notes of David Ellis are Lemma \ref{remarks}, Lemma \ref{lemmacounting1}, Lemma \ref{lemmacountingproj} and Lemma \ref{lemma1ellis}. \\
 While finishing the last details of this paper, the paper \cite{nieuw} appeared on Arxiv. In that paper, the authors deduce similar results as ours in the vector space setting. It is worth noting that our results were obtained independently, and our paper deals with both the affine and projective case at once.}\\

A family of subsets that are pairwise intersecting, is called an \textit{Erd{\H{o}}s-Ko-Rado set}. The classification of the largest Erd{\H{o}}s-Ko-Rado sets is called the \textit{Erd{\H{o}}s-Ko-Rado problem}, or \emph{EKR problem}. 
The investigation of EKR problems started in the context of set theory. The first question was  to  determine the size of the largest sets of pairwise non-trivially  intersecting subsets. In 1961,  Erd{\H{o }}s, Ko and Rado solved this problem \cite{erdos_ko_rado}, and their result was improved by Wilson in 1984 \cite{wilsonEKR}.

\begin{theorem}{\textbf{\cite{wilsonEKR}}}\label{thm1}
	Let $n, k$ and $t$ be positive integers and  suppose that $k \geq t \geq 1$ and $n \geq (t+1)(k-t+1)$. If $\mathcal{S}$ is a family of subsets of size $k$ in a set $\Omega$, with $|\Omega | = n$, such that the elements of $\mathcal{S}$ pairwise intersect in at least $t$ elements, then $|\mathcal{S}| \leq \binom{n-t}{k-t}$.\\
	Moreover, if $n \geq (t + 1)(k − t + 1) + 1$, then $|\mathcal{S}| = \binom{n-t}{k-t}$ if and only if $\mathcal{S}$ is the set of all the subsets of size $k$ through a fixed subset  of $\Omega$ of size $t$.
\end{theorem}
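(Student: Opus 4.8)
The plan is to prove Theorem~\ref{thm1} by the algebraic ratio-bound (Hoffman/Delsarte) method applied to the Johnson scheme, since this is what produces the exact threshold $n \ge (t+1)(k-t+1)$ together with the sharp value $\binom{n-t}{k-t}$ and its uniqueness. First I would fix the ambient space: take as vertices all $k$-subsets of $\Omega$, so that there are $N=\binom{n}{k}$ of them, and encode $\mathcal{S}$ by its $0$--$1$ characteristic vector $f\in\mathbb{R}^{N}$. For $0\le i\le k$ let $A_i$ be the association-scheme matrix with $(A_i)_{A,B}=1$ exactly when $|A\cap B|=k-i$. The hypothesis that $\mathcal{S}$ is $t$-intersecting says precisely that $f^{\top}A_i f=0$ for every $i\ge k-t+1$ (since $k\ge t$ forces $k-t+1\ge 1$, each such $A_i$ has zero diagonal), i.e. that $\mathcal{S}$ is a coclique of the weighted graph with adjacency matrix $M=\sum_{i\ge k-t+1}c_iA_i$ for any nonnegative weights $c_i$.

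Second, I would invoke the spectral structure of the Johnson scheme. All the $A_i$ are simultaneously diagonalised by the $k+1$ common eigenspaces $V_0,\dots,V_k$, and the eigenvalue of $A_i$ on $V_j$ is the Eberlein polynomial $E_i(j)$; hence $M$ is $d$-regular with $d=\sum_{i\ge k-t+1}c_iE_i(0)$ and eigenvalues $\mu_j=\sum_{i\ge k-t+1}c_iE_i(j)$ on $V_j$. The Hoffman ratio bound then yields $|\mathcal{S}|\le N\cdot\frac{-\mu_{\min}}{d-\mu_{\min}}$, where $\mu_{\min}=\min_{j\ge 1}\mu_j$. The crux is to choose the weights $c_i\ge 0$ so that this minimum is attained on the eigenspace $V_t$ and so that $\frac{-\mu_{\min}}{d-\mu_{\min}}=\binom{n-t}{k-t}\big/\binom{n}{k}$; a convenient feasible choice is guided by the Frankl--Wilson inclusion matrix between $t$-subsets and $k$-subsets, whose rank and kernel locate the relevant eigenspace. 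This eigenvalue optimisation is the main obstacle, and it is exactly here that the arithmetic condition $n\ge(t+1)(k-t+1)$ is forced: below this threshold the minimising eigenspace moves, the ratio bound no longer equals $\binom{n-t}{k-t}$, and the extremal family ceases to be a $t$-pencil (the Frankl/Ahlswede--Khachatrian phenomenon).

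Finally, for the uniqueness statement under the strict inequality $n\ge(t+1)(k-t+1)+1$, I would exploit the equality case of the ratio bound: equality forces $f-\tfrac{|\mathcal{S}|}{N}\mathbf{1}$ to lie entirely in the minimising eigenspace $V_t$. I would then convert this spectral constraint into combinatorics using the $0$--$1$ integrality of $f$: the projection of $f$ onto $V_t$ shows that the number of members of $\mathcal{S}$ meeting each fixed $t$-set in a prescribed way is rigidly controlled, and a short counting argument on the derived families (obtained by deleting or fixing $t$ points) then pins $\mathcal{S}$ down to the set of all $k$-sets through a single fixed $t$-set. The strict inequality is precisely what guarantees that $V_t$ is the \emph{unique} eigenspace attaining $\mu_{\min}$, so that no competing extremal configuration can survive.
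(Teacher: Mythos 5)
This theorem is not proved in the paper at all: it is quoted verbatim from Wilson's 1984 paper \cite{wilsonEKR} as a known result, so there is no internal proof to compare against. What your proposal sketches is, in outline, exactly Wilson's original argument: a weighted pseudo-adjacency matrix supported on the classes $A_i$ with $i\ge k-t+1$ of the Johnson scheme, the Hoffman/Delsarte ratio bound, and an equality analysis forcing the characteristic vector into $V_0\oplus V_t$.

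However, as a proof your text has a genuine gap, and it is precisely where the whole difficulty of the theorem lives. You write that ``the crux is to choose the weights $c_i\ge 0$'' so that the least eigenvalue is attained on $V_t$ and the ratio equals $\binom{n-t}{k-t}/\binom{n}{k}$, and that this is ``guided by the Frankl--Wilson inclusion matrix'' --- but you never exhibit such weights, never compute the Eberlein eigenvalues $\mu_j$ of your matrix $M$, and never verify that the minimisation over $j\ge1$ lands on $j=t$ exactly when $n\ge(t+1)(k-t+1)$. Wilson's paper is devoted almost entirely to constructing this matrix explicitly (via a carefully chosen polynomial in the scheme) and proving the required eigenvalue inequalities; without that construction the ratio bound gives nothing, and the threshold $(t+1)(k-t+1)$ does not ``fall out'' of any generic argument. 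The uniqueness step has the same problem in milder form: asserting that membership of $f-\frac{|\mathcal{S}|}{N}\mathbf{1}$ in $V_t$ ``rigidly controls'' the family and that ``a short counting argument'' finishes is plausible but is not a proof; one must actually show that a $0$--$1$ vector in $V_0\oplus V_t$ of the given weight is the indicator of a $t$-star, which requires an argument (e.g.\ via the $t$-to-$k$ inclusion matrix having full rank and an analysis of the degree sequence on $t$-sets). In short, the skeleton is the right one, but every load-bearing bone is missing.
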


    Hilton and Milner \cite{hilton_minler} investigated the largest  Erd{\H{o}}s-Ko-Rado sets $S$ with the property that there is no point contained in all elements of $S$. The classification of the second largest Erd{\H{o}}s-Ko-Rado set is often called a \emph{Hilton-Milner} result.

  The Erd{\H{o}}s-Ko-Rado and Hilton-Milner problem 
  can be translated to many other settings such as projective and affine geometries \cite{surveyEKR}, permutation groups \cite{EKRpermutation} and designs \cite{EKRdesign}. In this article, we work in the projective and affine geometry setting, which is sometimes called a \emph{q-analogue} setting.\\
We present the relevant results in Theorems \ref{frankl} and \ref{q-analogue_Hilton}, but we first briefly recall the definition of $q$\textit{-ary Gaussian coefficient}.
 
 \begin{definition}
 	Let $q$ be a prime power,  let $n,k$ be  non-negative integers with $k \leq n $. The $q$\textit{-ary Gaussian
 		coefficient} of $n$ and $k$ is defined by
 	\begin{equation*}
 	\qbin{n}{k}_q=
 	\begin{cases} 
 	\frac{(q^n-1)\cdots(q^{n-k+1}-1)}{(q^k-1)\cdots(q-1)}  \hspace{1.3cm}\textnormal{ if $k>0$} \\
 	\hspace{1.5cm} 1 \hspace{2.5cm}\textnormal{otherwise} 
 	
 	\end{cases}
 	\end{equation*}
 \end{definition}
 We will write $\qbin{n}{k}$, if the field size $q$ is clear from the context. The number of $k$-spaces in $\PG(n,q)$ is $\qbin{n+1}{k+1}$  and the number of $k$-spaces through a fixed $t$-space  in $\PG(n,q)$, with $0 \leq t \leq k$, is $\qbin{n-t}{k-t}$.
 Moreover, we will denote the number $\qbin{n+1}{1}$ by the symbol $\theta_n$.\\

Frankl and Wilson proved the following  Erd{\H{o}}s-Ko-Rado result in finite projective spaces; they classified the largest example of sets of $k$-spaces pairwise intersecting in at least a  $t$-space.  Here, the set of subspaces  through a fixed $t$-space will be called a \emph{$t$-pencil}, and, in particular, a \textit{point-pencil} if $t=0$.
 \begin{theorem}{\textbf{\cite[Theorem 1]{frankl}}}\label{frankl}
 	Let $t$ and $k$ be integers, with $0 \leq t \leq k$. Let $\mathcal{S}$ be a set of $k$-spaces
 	in $\PG(n,q)$, pairwise intersecting in at least a $t$-space.
 	\begin{itemize}
 		\item [(i)] If $n \geq 2k + 1$, then $| \mathcal{S} |
 		\leq \qbin{n-t}{k-t}$. Equality holds if and only if $\mathcal{S}$ is the set of all the $k$-spaces, containing a fixed $t$-space of $\PG(n,q)$, or $n = 2k +1$ and $\mathcal{S}$ is the set of all the $k$-spaces in a fixed $(2k -t)$-space.
 		\item[(ii)] If $2k-t \leq n \leq 2k$, then $|\mathcal{S}| \leq \qbin{2k-t+1}{k+1}$. Equality holds if and only if $\mathcal{S}$ is the set of all the $k$-spaces in a fixed $(2k-t)$-space.
 	\end{itemize}
 \end{theorem}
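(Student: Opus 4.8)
The plan is to pass to the vector space $V=\mathbb{F}_q^{\,n+1}$, where a $k$-space of $\PG(n,q)$ becomes a $(k+1)$-dimensional subspace and the hypothesis ``pairwise meet in at least a $t$-space'' becomes ``pairwise intersect in dimension at least $t+1$''. Two elementary remarks fix the regime. Any two $k$-spaces of $\PG(n,q)$ meet in dimension at least $2k-n$, so the intersection condition is vacuous once $n\le 2k-t$ and the value $\qbin{2k-t+1}{k+1}$ of (ii) is then just the total number of $k$-spaces in $\PG(2k-t,q)$; the condition becomes genuinely restrictive exactly in the ranges of (i) and (ii). I would separate the inequality from the classification of the equality case, and reduce case (ii) to case (i) by projective duality once (i) is settled.

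For the inequality in (i) I would work in the Grassmann ($q$-Johnson) association scheme on the $(k+1)$-subspaces of $V$, whose relations $R_i$ record $\dim(A\cap B)=k+1-i$. The family $\mathcal{S}$ is a coclique in the graph $\Gamma$ that is the union of the relations $R_{k+1-t},\dots,R_{k+1}$, i.e.\ the pairs meeting in dimension at most $t$. The eigenvalues of $\Gamma$ are sums of the scheme's eigenvalues ($q$-analogues of Eberlein polynomials), and I would feed the least eigenvalue into the Delsarte--Hoffman ratio bound; in the range $n\ge 2k+1$ this should yield exactly $|\mathcal{S}|\le\qbin{n-t}{k-t}$, the size of a $t$-pencil. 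At sketch level I would quote this eigenvalue computation rather than redo it. A more self-contained route to the bound for families with a common point is induction on $t$: if all members of $\mathcal{S}$ share a point $P$, quotient by $P$ to get a $(t-1)$-intersecting family of $(k-1)$-spaces in $\PG(n-1,q)$ with the identical bound $\qbin{(n-1)-(t-1)}{(k-1)-(t-1)}=\qbin{n-t}{k-t}$, the base case $t=0$ being the $q$-analogue of Erd\H{o}s--Ko--Rado for the $q$-Kneser graph.

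The classification of the equality case is where the genuine difficulty lies, and the ratio bound is the natural lever: when it is tight the characteristic vector of $\mathcal{S}$ must lie in the direct sum of the eigenspaces attached to the extreme eigenvalues, which is a strong regularity constraint. The task is to translate this spectral condition into geometry and conclude that $\mathcal{S}$ is either a $t$-pencil or, only when $n=2k+1$, the set of all $k$-spaces inside a fixed $(2k-t)$-space. Two features demand care. First, the non-pencil extremal family has no common point (its members fill $\PG(2k-t,q)$ yet share no point), so the ``quotient by a common point'' reduction cannot reach it, and a genuinely global argument, or a Hilton--Milner-type stability estimate forcing strict inequality off the two listed families, is needed. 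Second, one must check arithmetically that this second family meets the bound exactly at the boundary, namely $\qbin{2k-t+1}{k+1}=\qbin{2k+1-t}{k-t}$ by the symmetry of the Gaussian coefficient, while for $n>2k+1$ its size $\qbin{2k-t+1}{k+1}$ is strictly below $\qbin{n-t}{k-t}$; this is what pins it to, and only to, $n=2k+1$. I expect the conversion of the extremal spectral condition into this exact geometric list to be the main obstacle.

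Finally, case (ii) for $2k-t<n\le 2k$ follows from case (i) by a duality $\delta$ of $\PG(n,q)$. If $\mathcal{S}$ is a $t$-intersecting family of $k$-spaces, then $\dim\langle A,B\rangle\le 2k-t$ for $A,B\in\mathcal{S}$, whence $\dim(A^{\delta}\cap B^{\delta})\ge n-1-(2k-t)$; thus $\mathcal{S}^{\delta}$ is a $t'$-intersecting family of $k'$-spaces with $k'=n-k-1$ and $t'=n-2k+t-1$, and one verifies $t'\ge 0$ and $n\ge 2k'+2$, so $\mathcal{S}^{\delta}$ lies strictly inside the regime of (i) where the $t'$-pencil is the unique extremal family. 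Dualizing back, a $t'$-pencil corresponds precisely to the set of all $k$-spaces contained in a fixed $(2k-t)$-space, and its size $\qbin{n-t'}{k'-t'}=\qbin{2k-t+1}{k+1}$ is exactly the claimed bound; the boundary value $n=2k-t$ is the trivial all-$k$-spaces case noted at the start. This single reduction delivers both the bound of (ii) and the uniqueness of its extremal configuration.
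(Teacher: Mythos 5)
First, a point of comparison: the paper does not prove Theorem \ref{frankl} at all. It is quoted verbatim from Frankl and Wilson \cite[Theorem 1]{frankl} and used as a black box throughout, so there is no in-paper argument to measure your proposal against. Judged on its own terms, your reduction of (ii) to (i) by duality is correct and essentially complete: the parameters $t'=n-2k+t-1$ and $k'=n-k-1$, the check that $n\geq 2k'+2$ throughout $2k-t<n\leq 2k$ (so that the dual family sits strictly inside the regime where the pencil is the unique extremal example), and the identity $\qbin{n-t'}{k'-t'}=\qbin{2k-t+1}{k-t}=\qbin{2k-t+1}{k+1}$ all check out, and a $t'$-pencil does dualize to the set of all $k$-spaces in a fixed $(2k-t)$-space. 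Your framing of the regimes (vacuity of the hypothesis for $n\leq 2k-t$; the two extremal families having equal size exactly at $n=2k+1$) is also correct.

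The genuine gap is that everything in part (i) that carries the weight of the theorem is deferred rather than proved. The Delsarte--Hoffman bound only yields $\qbin{n-t}{k-t}$ after one determines the least eigenvalue of the union graph $R_{k+1-t}\cup\dots\cup R_{k+1}$ and identifies the eigenspace on which it is attained over the whole range $n\geq 2k+1$; this is a substantial computation that you only gesture at, and it is not what Frankl and Wilson do (their proof is a linear-algebraic rank argument with inclusion matrices, not a spectral one), so the ingredient you propose to ``quote'' is not available off the shelf in the form you need. More seriously, the classification of the equality case --- the part this paper actually leans on, since it needs the $t$-pencil to be the \emph{unique} extremal family for $n\geq 2k+2$ --- is explicitly labelled by you as ``the main obstacle'' with no argument offered: passing from ``the characteristic vector lies in the sum of the extreme eigenspaces'' to ``$\mathcal{S}$ is a $t$-pencil or fills a $(2k-t)$-space'' is precisely the hard content of the theorem. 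The induction-on-$t$ fallback you mention applies only to families with a common point and therefore cannot close this, as you yourself note. As it stands the proposal is a plausible roadmap with a sound treatment of (ii), but not a proof of (i).
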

  
 Blokhuis \textit{et al}, investigated the Hilton-Milner problem, and classified the second-largest maximal Erd{\H{o}}s-Ko-Rado sets of subspaces in a finite projective space.
  
  \begin{theorem}{\textbf{\cite[Theorem 1.3, Proposition 3.4]{q-analogue_hilton}}}\label{q-analogue_Hilton}
  	Let  $\mathcal{S}$ be a maximal set of pairwise intersecting $k$-spaces in $\PG(n,q)$, with $n \geq 2k + 2$, $k \geq 2$ and $q \geq 3$ (or
  	$n \geq 2k + 4$, $k \geq 2$ and $q = 2$). If $\mathcal{S}$ is not a point-pencil, then
  	\[ |\mathcal{S} |\leq \qbin{n}{k}-q^{k(k+1)}\qbin{n-k-1}{k}+q^{k+1}.\]
  	Moreover, if equality holds, then
  	\begin{itemize}
  		\item [(i)] either $\mathcal{S}$ consists of all the $k$-spaces through a fixed point
  		P, meeting a fixed $(k+1)$-space $\sigma$, with $P \in \sigma$, in a $j$-space, $j \geq 1$, together with all the $k$-spaces in $\sigma$;
  		\item [(ii)]or else $k = 2$ and $\mathcal{S}$ is the set of all the planes meeting a fixed plane $\pi$ in at
  		least a line.
  	\end{itemize}
  \end{theorem}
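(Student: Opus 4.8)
The plan is to exploit that $\qbin{n}{k}$ is precisely the number of $k$-spaces through a fixed point, so that a point-pencil has size $\qbin{n}{k}$ and the theorem quantifies the loss incurred by not being a point-pencil. I would begin by checking that the configuration in (i) attains the claimed value. If $P\in\sigma$ with $\dim\sigma=k+1$, then in the quotient $\PG(n,q)/P\cong\PG(n-1,q)$ a $k$-space through $P$ fails to meet $\sigma$ in a line exactly when its image, a $(k-1)$-space, is skew to the image $\overline{\sigma}$, a $k$-space. The number of $(k-1)$-spaces skew to a fixed $k$-space in $\PG(n-1,q)$ is $q^{k(k+1)}\qbin{n-k-1}{k}$, so there are exactly $\qbin{n}{k}-q^{k(k+1)}\qbin{n-k-1}{k}$ $k$-spaces through $P$ meeting $\sigma$ in at least a line; adding the $q^{k+1}$ $k$-spaces of $\sigma$ that avoid $P$ gives the stated bound. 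One also checks that all these spaces pairwise intersect, since any two meet inside $\sigma$ or else share $P$.

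For the upper bound I would let $P$ be a point lying in a maximum number of members of $\mathcal{S}$ and split $\mathcal{S}=\mathcal{S}_P\cup\mathcal{S}_{\overline{P}}$ into those through $P$ and those avoiding it; maximality together with the non-pencil hypothesis forces $\mathcal{S}_{\overline{P}}\neq\varnothing$, for if every member contained $P$ then maximality would make $\mathcal{S}$ the full point-pencil. Fixing $\pi_0\in\mathcal{S}_{\overline{P}}$, every member of $\mathcal{S}_P$ meets $\pi_0$, so the same skew-space count (now with $P\notin\pi_0$) yields $|\mathcal{S}_P|\le\qbin{n}{k}-q^{k(k+1)}\qbin{n-k-1}{k}$. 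Moreover one verifies that the $k$-spaces through $P$ meeting $\pi_0$ are exactly those meeting $\langle P,\pi_0\rangle$ in at least a line, which is already the shape of the intended configuration.

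The main obstacle is to control $|\mathcal{S}_{\overline{P}}|$, and the crux is that the two parts cannot be bounded independently: enlarging $\mathcal{S}_{\overline{P}}$ beyond the $q^{k+1}$ spaces lying in a single $(k+1)$-space through $P$ necessarily deletes members of $\mathcal{S}_P$, since a second off-$P$ space not contained in $\langle P,\pi_0\rangle$ excludes further $k$-spaces through $P$. I would therefore set up a double count of the incidences between $\mathcal{S}_P$ and the points of the off-$P$ spaces: each point $Q$ of a $\pi_i\in\mathcal{S}_{\overline{P}}$ lies on at most $\qbin{n-1}{k-1}$ members of $\mathcal{S}_P$, namely those through the line $PQ$, and I would compare the number of $k$-spaces through $P$ actually present with the number forbidden by each additional off-$P$ space. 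The target is the combined estimate $|\mathcal{S}_P|+|\mathcal{S}_{\overline{P}}|\le\qbin{n}{k}-q^{k(k+1)}\qbin{n-k-1}{k}+q^{k+1}$, with the deficit among on-$P$ spaces always dominating the surplus of off-$P$ spaces. This is the genuinely hard step: the estimate must simultaneously rule out ``spread out'' off-$P$ spaces and be tight enough to force a common $(k+1)$-space, and it is exactly here that $n\ge 2k+2$ (respectively $n\ge 2k+4$ when $q=2$) enters, guaranteeing that the skew-space counts dominate the lower-order error terms.

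Finally, for the equality discussion I would feed the equality conditions back into the counts. Equality forces $\mathcal{S}_P$ to consist of \emph{all} $k$-spaces through $P$ meeting $\sigma:=\langle P,\pi_0\rangle$ in at least a line, and, via maximality, $\mathcal{S}_{\overline{P}}$ to be all $q^{k+1}$ $k$-spaces of $\sigma$ off $P$; this is configuration (i). The exceptional family (ii) is a coincidence available only for $k=2$: there ``all planes meeting a fixed plane $\pi$ in at least a line'' is itself a maximal intersecting family of the same cardinality, since two planes meeting $\pi$ in lines automatically share the intersection point of those two lines, and I would isolate it as the separate equality case that the combined estimate permits when $k=2$. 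Throughout this final analysis the maximality of $\mathcal{S}$ is what upgrades the ``contained in'' conclusions into ``equal to all such spaces''.
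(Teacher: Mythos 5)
A preliminary remark: the paper does not prove Theorem \ref{q-analogue_Hilton} at all; it is quoted from \cite{q-analogue_hilton} as background (it is essentially the $t=0$ instance of the Hilton--Milner problem that the rest of the paper treats for $t>0$). There is therefore no internal proof to measure your attempt against, and I can only judge it on its own terms.

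The parts you actually carry out are correct: the quotient-space computation showing that configuration (i) has exactly $\qbin{n}{k}-q^{k(k+1)}\qbin{n-k-1}{k}+q^{k+1}$ elements, the observation that maximality plus the non-pencil hypothesis gives $\mathcal{S}_{\overline P}\neq\varnothing$, and the bound $|\mathcal{S}_P|\le\qbin{n}{k}-q^{k(k+1)}\qbin{n-k-1}{k}$ obtained by forcing every member of $\mathcal{S}_P$ to meet a fixed $\pi_0\in\mathcal{S}_{\overline P}$. But the theorem is not the sum of these two observations, and the step you yourself label ``the genuinely hard step'' is exactly the one you do not perform. The incidence count you propose (each point $Q$ of an off-$P$ member lies on at most $\qbin{n-1}{k-1}$ members of $\mathcal{S}_P$, namely those through $PQ$) only recovers, with multiplicity, the skew-space bound you already have; it contains no mechanism for trading a surplus in $|\mathcal{S}_{\overline P}|$ against a deficit in $|\mathcal{S}_P|$. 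What is actually needed is a dichotomy of the following kind: either all members of $\mathcal{S}_{\overline P}$ lie in the single $(k+1)$-space $\langle P,\pi_0\rangle$, in which case $|\mathcal{S}_{\overline P}|\le q^{k+1}$ and you are done, or some $\pi_1\in\mathcal{S}_{\overline P}$ is not contained in $\langle P,\pi_0\rangle$, in which case every member of $\mathcal{S}_P$ must meet both $\pi_0$ and $\pi_1$ and one must show that the resulting loss in $|\mathcal{S}_P|$ strictly exceeds any possible gain in $|\mathcal{S}_{\overline P}|$ (which is itself an intersecting family and must be bounded in turn). These estimates are precisely where the hypotheses $n\ge 2k+2$, $q\ge 3$ (respectively $n\ge 2k+4$ for $q=2$) enter, and they are also where the exceptional family (ii) for $k=2$ has to be detected and shown to be impossible for $k\ge 3$; none of this is present. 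The equality analysis is likewise asserted rather than derived. As it stands, the proposal is a correct reduction and a plausible plan, but the quantitative core of the theorem is missing.
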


  In \cite{GuoXu}, Guo and Xu investigated the Erd{\H{o}}s-Ko-Rado problem in affine spaces. They proved that the largest $t$-intersecting family of $k$-spaces in $\AG(n,q)$, $n\geq 2k+t+2$, is the set of all $k$-spaces through a fixed $t$-space.  In Section \ref{sectionappendix}, we give a shorter proof for their result and improve their bound on $n$ to $n\geq 2k+1$.  For $t=0$, the second largest $t$-intersecting set of $k$-spaces in $\AG(n,q)$ was already described in \cite{Gong}. The main goal in this article is to describe the second largest Erd\H{o}s-Ko-Rado sets for $t>0$, for both $\PG(n,q)$ and $\AG(n,q)$.\\
  
  In many cases, we will use counting arguments to find the classification results. For this, we will often use the following lemma.
 \begin{lemma}[{\cite[Section 170]{Segre}}]\label{lemmadisjunct}
	The number of $j$-spaces disjoint from a fixed $m$-space in $\PG(n,q)$ equals $q^{(m+1)(j+1)}\qbin{n-m}{j+1}$.
\end{lemma}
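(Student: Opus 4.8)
The plan is to translate the projective statement into linear algebra and count via ordered bases. Working in the underlying vector space $V = \mathbb{F}_q^{n+1}$, a $j$-space of $\PG(n,q)$ corresponds to a $(j+1)$-dimensional subspace $U$ of $V$, and the fixed $m$-space corresponds to a fixed $(m+1)$-dimensional subspace $W$. Two projective subspaces are disjoint precisely when the corresponding vector subspaces meet only in the zero vector, so the quantity to compute is the number of $(j+1)$-dimensional subspaces $U$ with $U \cap W = \{0\}$.

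First I would count the ordered linearly independent $(j+1)$-tuples $(v_1, \dots, v_{j+1})$ whose span is disjoint from $W$; equivalently, $v_1, \dots, v_{j+1}$ extend a basis of $W$ to a linearly independent set. Choosing them one at a time, $v_i$ must avoid the span of $W$ together with $v_1, \dots, v_{i-1}$, a subspace of dimension $(m+1)+(i-1)$, leaving $q^{n+1} - q^{m+i}$ choices. Hence there are $\prod_{i=1}^{j+1}(q^{n+1} - q^{m+i})$ such tuples. Dividing by the number $\prod_{i=0}^{j}(q^{j+1} - q^{i})$ of ordered bases of a single $(j+1)$-space gives the number of subspaces.

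The remaining work is purely algebraic: factor out the appropriate power of $q$ from numerator and denominator, and recognise the leftover quotient as a Gaussian coefficient. Extracting $q^{m+i}$ from the $i$-th numerator factor contributes $q^{\sum_{i=1}^{j+1}(m+i)} = q^{(m+1)(j+1) + j(j+1)/2}$, while the denominator contributes $q^{j(j+1)/2}$, so the surviving power of $q$ is exactly $q^{(m+1)(j+1)}$; the remaining ratio of terms of the form $(q^{\ell}-1)$ is precisely $\qbin{n-m}{j+1}$. This yields $q^{(m+1)(j+1)}\qbin{n-m}{j+1}$, as claimed. The only point requiring care is the bookkeeping of exponents of $q$, so that the leftover $q$-powers cancel cleanly and the Gaussian coefficient appears with the correct parameters.

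Alternatively, and more conceptually, one may pass to the quotient $V/W$, of dimension $n-m$: any $U$ with $U \cap W = \{0\}$ injects into $V/W$ as a $(j+1)$-dimensional subspace, and for each such image the lifts disjoint from $W$ are exactly the complements of $W$ inside its $(m+j+2)$-dimensional preimage. Using the standard fact that an $a$-dimensional subspace of an $(a+b)$-dimensional space has $q^{ab}$ complements, here with $a = m+1$ and $b = j+1$, this immediately gives the factor $q^{(m+1)(j+1)}$ multiplied by $\qbin{n-m}{j+1}$, avoiding the explicit simplification.
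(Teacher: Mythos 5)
Your proposal is correct. Note that the paper itself gives no proof of this lemma at all: it is quoted directly from \cite[Section 170]{Segre}, so there is no argument of the author's to compare against. Both of your derivations are valid and standard. In the first, the count $\prod_{i=1}^{j+1}(q^{n+1}-q^{m+i})$ of ordered independent tuples extending a basis of $W$, divided by $\prod_{i=0}^{j}(q^{j+1}-q^{i})$, does simplify exactly as you say: the numerator contributes $q^{(j+1)m+(j+1)(j+2)/2}=q^{(m+1)(j+1)+j(j+1)/2}$, the denominator $q^{j(j+1)/2}$, and the surviving ratio of factors $(q^{\ell}-1)$ is $\qbin{n-m}{j+1}$ with the correct parameters. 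The second, quotient-space argument is cleaner and arguably closer in spirit to how the lemma is actually deployed in the paper (the author repeatedly invokes it ``by investigating the quotient space'' of a subspace); its only external ingredient is that an $(m+1)$-dimensional subspace of an $(m+j+2)$-dimensional space has exactly $q^{(m+1)(j+1)}$ complements, which is independent of the lemma (complements are parametrised by linear maps from a fixed complement into $W$), so there is no circularity. Either version would serve as a complete, self-contained proof of the cited statement.
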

  
  In Section \ref{sectionexampleproj} and in Section \ref{sectionexampleaffien}, we give two examples of maximal sets of $k$-spaces in $\PG(n,q)$ and $\AG(n,q)$ respectively, pairwise intersecting in at least a $t$-space, which are not $t$-pencils. In Section \ref{sectionclassi}, we prove,  the Hilton-Milner results:  
    for $k>2t+2$, Example \ref{examplep} and for $k\leq2t+2$, Example \ref{example2p} is the second largest example of $k$-spaces in $\PG(n,q)$, pairwise meeting in at least a $t$-space.
  For $k>2t+1$, Example \ref{example} and for $k\leq2t+1$, Example \ref{example2} is the second largest example of $k$-spaces in $\AG(n,q)$, pairwise meeting in at least a $t$-space. In both cases we suppose that $q\geq 4$ and $n>2k+t+2$, or $q=3$ and $n>2k+t+3$.

\section{Two  examples in $\PG(n,q)$}\label{sectionexampleproj}
We start by giving two examples of maximal sets of $k$-spaces in PG$(n,q)$, pairwise meeting in at least a $t$-space. 
 Note that for $n\leq 2k-t$,  all projective $k$-spaces in $\PG(n,q)$ are pairwise intersecting in at least a $t$-space. Hence, we may suppose that $n>2k-t$.

\begin{example}\label{examplep}
	Let $\delta$ be a $t$-space, $t<k$, in $\PG(n,q)$, $n>2k-t$, and let $\pi$ be a $k$-space in $\PG(n,q)$ with $\dim(\pi\cap \delta)=t-1$. 
	Let $S_1$ be the set of all  $k$-spaces in $\langle \pi, \delta \rangle$. Let 
	$S_2$ be the set of all $k$-spaces through $\delta$ and meeting $\langle \pi, \delta \rangle$ in at least a $(t+1)$-space.
	Let $\mathcal{S}$ be the union of the sets $S_1$ and $S_2$. 
\end{example}

\begin{lemma}
	The set $\mathcal{S}$, described in Example \ref{examplep}, is a {maximal} set of $k$-spaces in $\PG(n,q)$, $n>2k-t$, pairwise intersecting in at least a $t$-space, of size
	\begin{align*}
	|\mathcal{S}|
	&=\theta_{k+1}-\theta_{k-t}+\qbin{n-t}{k-t}- q^{(k-t+1)(k-t)}\qbin{n-k-1}{k-t}\\
	&=\theta_{k+1}+\sum_{j=0}^{k-t-2} \qbin{k-t+1}{j+1}q^{(k-t-j)(k-t-j-1)}\qbin{n-k-1}{k-t-j-1}.
	\end{align*}
\end{lemma}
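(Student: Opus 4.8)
The plan is to verify the three assertions in turn: that $\mathcal{S}$ is $t$-intersecting, that the two displayed size formulas are correct and equal, and finally (the substantial part) that $\mathcal{S}$ is maximal. First set $\Sigma:=\langle\pi,\delta\rangle$; since $\dim(\pi\cap\delta)=t-1$, Grassmann's identity gives $\dim\Sigma=k+1$, so $S_1$ is precisely the set of hyperplanes of $\Sigma$. The pairwise intersection property then splits into three easy checks: two members of $S_1$ are hyperplanes of the $(k+1)$-space $\Sigma$ and so meet in a $(k-1)$-space, which contains a $t$-space because $t<k$; two members of $S_2$ both contain $\delta$; and for $\sigma_1\in S_1$ and $\sigma_2\in S_2$, writing $W=\sigma_2\cap\Sigma$ with $\dim W\ge t+1$, the hyperplane $\sigma_1$ of $\Sigma$ meets $W$ in dimension at least $\dim W-1\ge t$, and $\sigma_1\cap W\subseteq\sigma_1\cap\sigma_2$.

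For the size I would use inclusion–exclusion. Here $|S_1|=\qbin{k+2}{k+1}=\theta_{k+1}$, and $S_1\cap S_2$ is the set of hyperplanes of $\Sigma$ through $\delta$, of which there are $\theta_{k-t}$. To count $S_2$, start from the full $t$-pencil of size $\qbin{n-t}{k-t}$ and subtract the $k$-spaces through $\delta$ meeting $\Sigma$ only in $\delta$; passing to $\PG(n,q)/\delta\cong\PG(n-t-1,q)$ these become the $(k-t-1)$-spaces disjoint from the $(k-t)$-space $\Sigma/\delta$, whose number is $q^{(k-t+1)(k-t)}\qbin{n-k-1}{k-t}$ by Lemma \ref{lemmadisjunct}. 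Combining these gives the first formula. The second formula arises by instead counting $S_2\setminus S_1=\mathcal{S}\setminus S_1$ directly, grouping its members by the exact dimension $t+1+j$ of $\sigma\cap\Sigma$ for $0\le j\le k-t-2$: in the quotient these are $(k-t-1)$-spaces meeting $\Sigma/\delta$ in exactly a $j$-space, and choosing that $j$-space inside $\Sigma/\delta$ and then extending disjointly (again via Lemma \ref{lemmadisjunct}) yields the summand $\qbin{k-t+1}{j+1}q^{(k-t-j)(k-t-j-1)}\qbin{n-k-1}{k-t-j-1}$. Equivalently, the equality of the two formulas is the partition identity for $\qbin{n-t}{k-t}$ over all intersection dimensions with $\Sigma/\delta$, whose two extreme terms are the disjoint term $q^{(k-t+1)(k-t)}\qbin{n-k-1}{k-t}$ and the term $\theta_{k-t}$.

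For maximality, let $\sigma$ be any $k$-space meeting every member of $\mathcal{S}$ in at least a $t$-space; I must show $\sigma\in\mathcal{S}$. If $\delta\subseteq\sigma$, then $\sigma$ automatically meets every member of $S_2$ in at least $\delta$, so compatibility is governed only by $S_1$: should $\dim(\sigma\cap\Sigma)=t$ (so $\sigma\cap\Sigma=\delta$), a hyperplane $h$ of $\Sigma$ with $\delta\not\subseteq h$ gives $\sigma\cap h=\delta\cap h$ of dimension $t-1<t$, a contradiction; hence $\dim(\sigma\cap\Sigma)\ge t+1$ and $\sigma\in S_2$. If $\delta\not\subseteq\sigma$ but $\sigma\subseteq\Sigma$ then $\sigma\in S_1$. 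The remaining case $\delta\not\subseteq\sigma$ with $\sigma\not\subseteq\Sigma$ is the heart of the proof; here I again note that if $\dim(\sigma\cap\Sigma)\le t$ then a hyperplane of $\Sigma$ not containing $\sigma\cap\Sigma$ meets $\sigma$ in dimension below $t$, so I may assume $t+1\le\dim(\sigma\cap\Sigma)\le k-1$.

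In this last case I would derive a contradiction by producing some $\tau\in S_2$ with $\dim(\sigma\cap\tau)\le t-1$. Working in $\PG(n,q)/\delta$, a $k$-space $\tau\supseteq\delta$ corresponds to a $(k-t-1)$-space $\bar\tau$, and a short Grassmann computation (using the modular law $\langle\sigma,\delta\rangle\cap\Sigma=\langle\sigma\cap\Sigma,\delta\rangle$ and $\langle\bar\tau,\hat\sigma\rangle=\langle\sigma,\delta\rangle/\delta$) yields the key identity $\dim(\sigma\cap\tau)=\dim(\bar\tau\cap\hat\sigma)+\dim(\sigma\cap\delta)+1$, where $\hat\sigma=\langle\sigma,\delta\rangle/\delta$. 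It therefore suffices to find $\bar\tau$ meeting $\Sigma/\delta$ (so that $\tau\in S_2$) while meeting $\hat\sigma$ in dimension at most $t-\dim(\sigma\cap\delta)-2$. Fixing a point of $\Sigma/\delta$ to force $\bar\tau\cap(\Sigma/\delta)\neq\emptyset$ and choosing the remaining directions of $\bar\tau$ in general position relative to $\hat\sigma$, a dimension count shows the required $\bar\tau$ exists exactly because $n>2k-t$, the bound that makes the join $\langle\sigma,\tau\rangle$ fit inside $\PG(n,q)$. I expect this final existence statement to be the main obstacle: one must distinguish whether $\Sigma/\delta\subseteq\hat\sigma$, and check in each case that the generic intersection dimension of $\bar\tau$ with $\hat\sigma$ stays within the tolerance $t-\dim(\sigma\cap\delta)-2$. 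This is precisely where the hypothesis on $n$ is consumed, and it completes the proof that every compatible $k$-space already lies in $\mathcal{S}$.
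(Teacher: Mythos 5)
Your proposal is correct, and the counting and pairwise-intersection parts coincide with the paper's proof (same inclusion--exclusion via $|S_1\cap S_2|=\theta_{k-t}$, same use of Lemma \ref{lemmadisjunct} in the quotient of $\delta$, same stratification by intersection dimension for the second formula -- your remark that the two formulas are just two truncations of the partition identity for $\qbin{n-t}{k-t}$ is a nice bonus). The maximality argument is where you genuinely diverge. The paper keeps all cases together: for a putative new element $\alpha$ with $\delta\nsubseteq\alpha$ it shows $\dim(\alpha\cap\langle\delta,P\rangle)=t$ for \emph{every} point $P$ of a $(k-t)$-space $\sigma_0\subset\pi\setminus\delta$ (using the pencils $\mathcal{S}_P$), and deduces $\alpha\subseteq\langle\pi,\delta\rangle$. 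You instead split off the subcase $\sigma\subseteq\Sigma$ early (where $\sigma\in S_1$ immediately) and, in the remaining case, manufacture a \emph{single} incompatible witness $\tau\in S_2$ via the exact identity $\dim(\sigma\cap\tau)=\dim(\bar\tau\cap\hat\sigma)+\dim(\sigma\cap\delta)+1$ (which is indeed an equality, since $\langle\sigma,\tau\cap\langle\sigma,\delta\rangle\rangle=\langle\sigma,\delta\rangle$ forces Grassmann to be tight). Both routes consume $n>2k-t$ at the same spot, namely the existence of a $(k-t-1)$-space in $\PG(n-t-1,q)$ through a point of $\Sigma/\delta$ meeting $\hat\sigma$ in small enough dimension; you leave this as a sketch, but the paper is no more explicit there (``this gives a contradiction since $n>2k-t$''), and the count does close: the only delicate subcase is $\dim(\sigma\cap\delta)=t-1$, where you need $\bar\tau$ disjoint from the $(k-t)$-space $\hat\sigma$, and the required point $Q\in(\Sigma/\delta)\setminus\hat\sigma$ exists precisely because $\sigma\nsubseteq\Sigma$ implies $\hat\sigma\neq\Sigma/\delta$. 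So your approach buys a cleaner logical structure (one witness instead of a sweep over $\sigma_0$) at the cost of a slightly more intricate final dimension count with two subcases.
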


\begin{proof}
We start with determining the size of $\mathcal{S}$. Note first that the number of elements of $S_1\setminus S_2$ is equal to the number of $k$-spaces in the $(k+1)$-space $\langle \pi, \delta \rangle$, not containing $\delta$. Hence, $|S_1\setminus S_2|=\theta_{k+1}-\theta_{k-t}$.

Let $\sigma_0$ be a projective $(k-t)$-space in $\pi \setminus \delta$.
An element of $S_2$ has at least a point in common with $\sigma_0$. 
Hence, $|S_2|$ is the number of $k$-spaces through $\delta$, minus the number of $k$-spaces through $\delta$,
disjoint from $\sigma_0$. By investigating the quotient space of $\delta$, and due to Lemma \ref{lemmadisjunct}, 
we have that $|S_2|=\qbin{n-t}{k-t}-q^{(k-t+1)(k-t)}\qbin{n-k-1}{k-t} $. Hence

\begin{align}
|\mathcal{S}|
&=\theta_{k+1}-\theta_{k-t}+\qbin{n-t}{k-t}- q^{(k-t+1)(k-t)}\qbin{n-k-1}{k-t}. 
\end{align}

On the other hand, there also holds that $|\mathcal{S}|=\theta_{k+1}+\sum_{j=0}^{k-t-2} Q_j(n,k,t)$, with $Q_j(n,k,t)=|\{ \beta\in S_2| \beta \nsubseteq \langle \pi, \delta\rangle, \dim(\beta\cap \sigma_0)=j \}|$. Note that the first term is the number of $k$-spaces in $\langle \pi, \delta \rangle$.  Since there are $\qbin{k-t+1}{j+1}$ $j$-spaces in $\sigma_0$, and by using Lemma \ref{lemmadisjunct}, we find that 
\begin{align}\label{Sproj1somnotatie}
|\mathcal{S}|&=\theta_{k+1}+\sum_{j=0}^{k-t-2} \qbin{k-t+1}{j+1}q^{(k-t-j)(k-t-j-1)}\qbin{n-k-1}{k-t-j-1}.
\end{align}

	It is clear that all elements of $S_2$ pairwise meet in at least the $t$-space $\delta$. Every two elements of $S_1$ meet in a $(k-1)$-space, since they are contained in a $(k+1)$-space. Note that $k-1\geq t$. 
	Consider now a $k$-space $\pi_1$ in $S_1$ and a $k$-space $\pi_2$ in $S_2$. Note that $\pi_1 \subset  \langle \pi, \delta \rangle$, and $\pi_2$ meets $\langle \pi,\delta \rangle$ in at least a $(t+1)$-space. Again, due to the Grassmann dimension property, it follows that they meet in at least a $t$-space. 
	
	Now we prove that $\mathcal{S}$ cannot be extended to a larger set of $k$-spaces pairwise intersecting in at least a $t$-space. Suppose that $\alpha \notin \mathcal{S}$ is a $k$-space that meets every element of $\mathcal{S}$ in at least a $t$-space. If $\delta \subset \alpha$, then, since $\alpha \notin \mathcal{S}$, $\alpha$ meets $\langle \pi, \delta \rangle$ only in $\delta$. Hence, $\alpha$ meets $\pi$ in a $(t-1)$-space, which is a contradiction. This implies that we can suppose that $\delta \nsubseteq \alpha$. So, $\alpha$ meets $\delta$ in a $d$-space with $d\leq t-1$. Note that $\dim(\alpha\cap \langle \pi, \delta\rangle)\geq t+1$ since $\alpha$ meets all elements of $S_1$ in at least a $t$-space.
 For every point $P\in \sigma_0$, consider the set $\mathcal{S}_P$ of elements of $\mathcal{S}$ that meet $\langle \pi, \delta\rangle$ in $\langle \delta, P \rangle$. If $\dim (\alpha\cap \langle \delta, P \rangle)<t$, then $\alpha$ must meet all elements of $\mathcal{S}_P$ in a subspace outside of $\langle \pi, \delta\rangle$. This gives a contradiction since $n>2k-t$. Hence, $\dim (\alpha\cap \langle \delta, P \rangle)=t$ for all points $P\in \sigma_0$. This implies that $\dim(\alpha\cap \delta)=t-1$, and $\alpha$ must have a $t$-space in comon with all $(t+1)$-spaces $\langle \delta, P \rangle$ with $P\in \sigma_0$. Hence, $\alpha \in  \langle \pi, \delta \rangle$, and so $\alpha\in S_1$, a contradiction. 
\end{proof}

\begin{example}\label{example2p}
	Suppose $k>t$ and let $\Gamma$ be a $(t+2)$-space in $\PG(n,q)$, $n>2k-t$.  Let $\mathcal{S}$ be the set of all $k$-spaces in $\PG(n,q)$, meeting $\Gamma$ in at least a $(t+1)$-space.
\end{example}

\begin{lemma}
	The set $\mathcal{S}$, described in Example \ref{example2p}, is a maximal set of $k$-spaces in $\PG(n,q)$, pairwise intersecting in at least a $t$-space, of size \begin{align*}|\mathcal{S}| &=\qbin{n-t-2}{k-t-2} \left( 1+ \theta_{t+2}q^{k-t-1}\frac{q^{n-k}-1}{q^{k-t-1}-1}  \right).
	\end{align*}
\end{lemma}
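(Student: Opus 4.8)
The plan is to establish three things in turn: that the members of $\mathcal{S}$ pairwise meet in at least a $t$-space, that $|\mathcal{S}|$ has the stated value, and that $\mathcal{S}$ is maximal. For the intersection property I would note that any two members $\alpha,\beta$ of $\mathcal{S}$ meet $\Gamma$ in subspaces of dimension at least $t+1$, and that $\alpha\cap\Gamma$ and $\beta\cap\Gamma$ are both subspaces of the $(t+2)$-space $\Gamma$. The Grassmann identity inside $\Gamma$ then gives
\[
\dim(\alpha\cap\beta)\ \geq\ \dim(\alpha\cap\Gamma)+\dim(\beta\cap\Gamma)-\dim\Gamma\ \geq\ (t+1)+(t+1)-(t+2)=t,
\]
so every pair meets in at least a $t$-space.

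For the size I would split $\mathcal{S}$ according to $\dim(\alpha\cap\Gamma)\in\{t+1,t+2\}$. The members containing $\Gamma$ number $\qbin{n-t-2}{k-t-2}$. For the members meeting $\Gamma$ in exactly a $(t+1)$-space, I would fix one of the $\theta_{t+2}$ hyperplanes $\tau$ of $\Gamma$ and pass to the quotient $\PG(n,q)/\tau\cong\PG(n-t-2,q)$: there $\Gamma$ becomes a point and each admissible $k$-space becomes a $(k-t-2)$-space disjoint from that point. By Lemma \ref{lemmadisjunct} (with $m=0$, $j=k-t-2$) there are $q^{k-t-1}\qbin{n-t-2}{k-t-1}$ of these for each $\tau$, hence $\theta_{t+2}\,q^{k-t-1}\qbin{n-t-2}{k-t-1}$ in total. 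Adding the two contributions and factoring out $\qbin{n-t-2}{k-t-2}$, the stated closed form follows from the identity $\qbin{N}{j+1}/\qbin{N}{j}=(q^{N-j}-1)/(q^{j+1}-1)$ with $N=n-t-2$, $j=k-t-2$, which rewrites the ratio $\qbin{n-t-2}{k-t-1}/\qbin{n-t-2}{k-t-2}$ as $(q^{n-k}-1)/(q^{k-t-1}-1)$.

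For maximality, suppose $\alpha\notin\mathcal{S}$ meets every member of $\mathcal{S}$ in at least a $t$-space; then $d:=\dim(\alpha\cap\Gamma)\leq t$. I would choose a hyperplane $\tau$ of $\Gamma$ (a $(t+1)$-space) not containing $\alpha\cap\Gamma$, so that $\dim(\alpha\cap\tau)=d-1$, and then hunt for a $k$-space $\beta\supseteq\tau$ — which is automatically in $\mathcal{S}$ — with $\dim(\alpha\cap\beta)<t$. Passing again to the quotient by $\tau$, the space $\beta$ corresponds to a $(k-t-2)$-space and $\langle\alpha,\tau\rangle$ to a $(k-d)$-space, and a dimension count gives $\dim(\alpha\cap\beta)\leq d+\dim\!\big((\beta/\tau)\cap(\langle\alpha,\tau\rangle/\tau)\big)$. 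It therefore suffices to place $\beta/\tau$ so that it meets the projection of $\alpha$ in dimension at most $t-1-d$. The minimal intersection dimension of a $(k-t-2)$-space and a $(k-d)$-space in $\PG(n-t-2,q)$ is $\max(-1,2k-d-n)$, and the hypothesis $n>2k-t$ together with $d\leq t$ forces this to be at most $t-1-d$ in both the disjoint and the non-disjoint case. This yields $\dim(\alpha\cap\beta)\leq t-1$, contradicting the assumption on $\alpha$, so $\mathcal{S}$ admits no proper extension.

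The two counts and the Gaussian-coefficient algebra are routine; I expect the main obstacle to be the maximality step, where the quotient dimension bookkeeping must be arranged so that the single hypothesis $n>2k-t$ already guarantees a member of $\mathcal{S}$ that $\alpha$ fails to meet in a $t$-space. This is entirely parallel to the maximality argument already carried out for Example \ref{examplep}, and I would model the write-up on it.
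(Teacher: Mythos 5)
Your proposal is correct and follows essentially the same route as the paper: the same decomposition of $\mathcal{S}$ by $\dim(\alpha\cap\Gamma)\in\{t+1,t+2\}$ (your count $q^{k-t-1}\qbin{n-t-2}{k-t-1}$ per hyperplane $\tau$ of $\Gamma$ is just the paper's $\qbin{n-t-1}{k-t-1}-\qbin{n-t-2}{k-t-2}$ written via Lemma \ref{lemmadisjunct}), the same Grassmann argument inside $\Gamma$ for the pairwise intersection, and the same maximality idea of finding a $(t+1)$-space of $\Gamma$ meeting $\alpha$ in dimension at most $t-1$ and a member of $\mathcal{S}$ through it that $\alpha$ misses, which $n>2k-t$ guarantees. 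Your quotient-space bookkeeping in the maximality step merely makes explicit what the paper dispatches in one line.
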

\begin{proof}

The number of elements in $\mathcal{S}$ is the number of $k$-spaces through $\Gamma$,  together with the number of $k$-spaces, meeting $\Gamma$ in a $(t+1)$-space: \\
\begin{align*}|\mathcal{S}|&=\qbin{n-t-2}{k-t-2}+\theta_{t+2}\cdot\left(\qbin{n-t-1}{k-t-1}-\qbin{n-t-2}{k-t-2}\right)\\ &=\qbin{n-t-2}{k-t-2} \left( 1+ \theta_{t+2}q^{k-t-1}\frac{q^{n-k}-1}{q^{k-t-1}-1}  \right).
\end{align*}

	Consider two elements $\pi_1, \pi_2\in \mathcal{S}$. Then $\pi_1 \cap \Gamma$ and $\pi_2\cap \Gamma$ are two subspaces with dimension at least $t+1$ in a $(t+2)$-space, and so, they meet in at least a $t$-space. 
	
	Now we prove that $\mathcal{S}$ cannot be extended to a larger set of $k$-spaces pairwise intersecting in at least a $t$-space. Suppose that $\alpha \notin \mathcal{S}$ is a $k$-space that meets every element of $\mathcal{S}$ in at least a $t$-space. Since $\alpha \notin \mathcal{S}$, we know that $\dim(\alpha \cap \Gamma)\leq t$. Let $\gamma$ be a $(t+1)$-space in $\Gamma$ such that $\dim(\alpha \cap \Gamma\cap \gamma)\leq t-1$. 
	Then $\alpha$ must meet all elements of $\mathcal{S}$ through $\gamma$ in a subspace outside of $\Gamma$. 
	 For $n>2k-t$, this is not possible.
	 Hence, $\mathcal{S}$ cannot be extended. 
\end{proof}

\begin{remark}
	Note that for $k=t+1$, Example \ref{examplep} and Example \ref{example2p} coincide. In that case, $\mathcal{S}$ is the set of all $(t+1)$-spaces in a fixed $(t+2)$-space in $\PG(n,q)$.
\end{remark}

\section{Two  examples in $\AG(n,q)$}\label{sectionexampleaffien}
We also give two examples of maximal sets of $k$-spaces in AG$(n,q)$, pairwise meeting in at least a $t$-space. In Section \ref{sectionclassi}, we prove that the largest non-trivial sets of $k$-spaces, pairwise meeting in at least a $t$-space, in AG$(n,q)$ are given by Example \ref{example} or Example \ref{example2}. If $k>2t+1$, Example \ref{example} is the largest set, whereas if $k\leq 2t+1$, Example \ref{example2} is the largest one. 

From now on, for an affine subspace $\alpha$ we denote the projective extension of $\alpha$ by $\tilde{\alpha}$, and let $H_\infty=\PG(n,q)\setminus \AG(n,q)$ be the hyperplane at infinity. Similarly, if $\mathcal{S}$ is a set of affine spaces, then we denote the corresponding set of projective spaces by $\tilde{\mathcal{S}}$.

Let $\mathcal{S}$ be a set of $k$-spaces in $\AG(n,q)$ with the property that for every two elements $\pi_1, \pi_2 \in \mathcal{S}$, $\tilde{\pi}_1\cap H_\infty \neq \tilde{\pi}_2\cap H_\infty$. Then we find, for $n\leq 2k-t$, that $\mathcal{S}$  is a set of $k$-spaces pairwise intersecting in at least an affine $t$-space. Hence, we suppose from now on, that $n>2k-t$.

\begin{example}\label{example}
	Let $\delta$ be a $t$-space, $t<k$, in $\AG(n,q)$, and let $\pi$ be a $k$-space in AG$(n,q)$ with $\pi\cap \delta$ an affine $(t-1)$-space. 
	Let $S_1$ be a \textcolor{black}{maximal} set of 
	 affine $k$-spaces in $\langle \pi, \delta \rangle$, including $\pi$, such that for any two elements $\pi_1, \pi_2$ of $S_1$, $\tilde{\pi}_1\cap H_\infty \neq \tilde{\pi}_2\cap H_\infty$, and such that for every $\pi_1 \in S_1$: $ \tilde{\delta}\cap H_\infty \nsubseteq\tilde{\pi}_1$.
	Let $S_2$ be the set of all $k$-spaces through $\delta$ and meeting $\langle \pi, \delta \rangle$ in at least a $(t+1)$-space.
	Let $\mathcal{S}$ be the union of the sets $S_1$ and $S_2$. 
\end{example}

\begin{lemma}
	The set $\mathcal{S}$, described in Example \ref{example}, is a {maximal} set of $k$-spaces in $\AG(n,q)$, $n>2k-t$, pairwise intersecting in at least a $t$-space, of size 	\begin{align}
	|\mathcal{S}|
	&=\theta_{k}-\theta_{k-t}+\qbin{n-t}{k-t}- q^{(k-t+1)(k-t)}\qbin{n-k-1}{k-t}\\
	&=\theta_{k}+\sum_{j=0}^{k-t-2} \qbin{k-t+1}{j+1}q^{(k-t-j)(k-t-j-1)}\qbin{n-k-1}{k-t-j-1}. 
	\end{align}
\end{lemma}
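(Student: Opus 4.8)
The plan is to follow the two-part strategy used for the projective Example~\ref{examplep}, writing $|\mathcal{S}|=|S_1|+|S_2|$ and then checking the $t$-intersection and maximality properties; the one genuinely new ingredient is that every intersection produced must be controlled at the hyperplane at infinity $H_\infty$ so that it is affine of the claimed dimension. Throughout I set $W=\widetilde{\langle\pi,\delta\rangle}\cap H_\infty$, a projective $k$-space, and $E=\tilde\delta\cap H_\infty\subseteq W$, a projective $(t-1)$-space; for an affine $k$-space $\beta\subseteq\langle\pi,\delta\rangle$ its \emph{direction} is the hyperplane $\tilde\beta\cap H_\infty$ of $W$. The elementary fact I use repeatedly is that $E\nsubseteq\tilde\beta$ holds exactly when $\beta$ meets $\delta$ transversally, i.e.\ $\beta\cap\delta$ is an affine $(t-1)$-space; in particular $E\nsubseteq\tilde\beta$ forces $\delta\nsubseteq\beta$.

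For the size, the defining properties of $S_1$ say precisely that its members have pairwise distinct directions, none of which contains $E$. The number of hyperplanes of $W\cong\PG(k,q)$ avoiding the fixed $(t-1)$-space $E$ is $\theta_k-\theta_{k-t}$, and each such direction is realised by an affine $k$-space inside $\langle\pi,\delta\rangle$, so maximality of $S_1$ gives $|S_1|=\theta_k-\theta_{k-t}$. Since every member of $S_1$ satisfies $E\nsubseteq\tilde\pi_1$, it has $\delta\nsubseteq\pi_1$, whereas every member of $S_2$ contains $\delta$; hence $S_1\cap S_2=\varnothing$ and $|\mathcal{S}|=|S_1|+|S_2|$. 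The computation of $|S_2|$ is verbatim the projective one: translating $\delta$ through the origin identifies the affine $k$-spaces through $\delta$ with the $(k-t)$-dimensional vector subspaces of the quotient by $\delta$, so there are $\qbin{n-t}{k-t}$ of them, and passing to the quotient by $\delta$ and applying Lemma~\ref{lemmadisjunct} yields $|S_2|=\qbin{n-t}{k-t}-q^{(k-t+1)(k-t)}\qbin{n-k-1}{k-t}$. Splitting $S_2$ instead into the $\theta_{k-t}$ members lying in $\langle\pi,\delta\rangle$ (the affine $k$-spaces through $\delta$ inside $\langle\pi,\delta\rangle$) and those not in $\langle\pi,\delta\rangle$ reproduces exactly the terms $\qbin{k-t+1}{j+1}q^{(k-t-j)(k-t-j-1)}\qbin{n-k-1}{k-t-j-1}$ computed for Example~\ref{examplep}; adding $|S_1|$ then gives both displayed expressions.

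Next I would verify that $\mathcal{S}$ is $t$-intersecting. Two members of $S_2$ share $\delta$, an affine $t$-space. Two distinct members of $S_1$ have distinct directions, hence are non-parallel affine hyperplanes of $\langle\pi,\delta\rangle$ and meet in an affine $(k-1)$-space with $k-1\ge t$. For $\pi_1\in S_1$ and $\pi_2\in S_2$ we have $\pi_1\cap\pi_2=\pi_1\cap(\pi_2\cap\langle\pi,\delta\rangle)$; the transversality fact gives that $\pi_1\cap\delta$ is an affine $(t-1)$-space lying in both $\pi_1$ and $\pi_2$, so the intersection is nonempty, and the \emph{affine} Grassmann inequality inside $\langle\pi,\delta\rangle$ (which needs the intersection to be nonempty) forces an affine subspace of dimension at least $k+(t+1)-(k+1)=t$. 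Guaranteeing nonemptiness through an explicit common subspace is precisely what replaces the purely projective dimension count of the projective case.

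Finally I turn to maximality, which I expect to be the main obstacle. Suppose $\alpha\notin\mathcal{S}$ is an affine $k$-space meeting every element of $\mathcal{S}$ in at least an affine $t$-space. If $\delta\subseteq\alpha$ then $\alpha\notin S_2$ forces $\alpha\cap\langle\pi,\delta\rangle=\delta$, so $\alpha\cap\pi\subseteq\pi\cap\delta$ is at most a $(t-1)$-space, contradicting that $\alpha$ meets $\pi\in S_1$. Hence $\delta\nsubseteq\alpha$. For each $P\in\sigma_0$ one considers the large subfamily $\mathcal{S}_P\subseteq S_2$ of $k$-spaces meeting $\langle\pi,\delta\rangle$ exactly in $\langle\delta,P\rangle$; if $\dim(\alpha\cap\langle\delta,P\rangle)<t$ then $\alpha$ would have to meet every member of $\mathcal{S}_P$ in a $t$-space outside $\langle\pi,\delta\rangle$, impossible for $n>2k-t$ by the same counting as in Example~\ref{examplep}. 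Thus $\dim(\alpha\cap\langle\delta,P\rangle)=t$ for every $P\in\sigma_0$, which forces $\alpha\cap\delta$ to be an affine $(t-1)$-space, and then projecting from $\delta$ exactly as in Example~\ref{examplep} gives $\alpha\subseteq\langle\pi,\delta\rangle$. Here the affine case diverges from the projective one: since $S_1$ is not all of $\langle\pi,\delta\rangle$, I cannot conclude $\alpha\in S_1$ immediately. Instead, the direction of $\alpha$ cannot contain $E$ (otherwise $\alpha$ would be parallel to $\delta$ and $\alpha\cap\delta$ could not be a $(t-1)$-space), so the direction of $\alpha$ is admissible, and by maximality of $S_1$ it equals the direction of some $\pi_1\in S_1$; being non-disjoint affine hyperplanes of the same direction, $\alpha=\pi_1\in\mathcal{S}$, a contradiction. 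The recurring technical difficulty, and the reason the projective proof cannot simply be quoted, is exactly this need to track each configuration at $H_\infty$: to keep every intersection affine of full dimension, and to replace ``all $k$-spaces of $\langle\pi,\delta\rangle$'' by ``one representative per admissible direction'' in the final identification of $\alpha$.
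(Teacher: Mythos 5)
Your proof is correct and follows essentially the same route as the paper: the same split $|\mathcal{S}|=|S_1|+|S_2|$ with the quotient counts via Lemma \ref{lemmadisjunct}, the same Grassmann-based intersection check controlled at $H_\infty$, and the same maximality argument via the families $\mathcal{S}_P$. Your final step — observing that $\alpha\subseteq\langle\pi,\delta\rangle$ has an admissible direction and, being non-disjoint from the parallel representative in $S_1$, must coincide with it — is in fact slightly more careful than the paper, which passes directly from $\alpha\subseteq\langle\pi,\delta\rangle$ to $\alpha\in S_1$.
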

\begin{proof}
We start with determining the size of $\mathcal{S}$. Note first that the number of elements of $S_1$ is equal to the number of $(k-1)$-spaces in $H_\infty\cap \langle \pi, \delta \rangle$, not containing $\delta\cap H_\infty$. Hence, $|S_1|=\theta_k-\theta_{k-t}$.

	Let $\tilde{\sigma_0}$ be a projective $(k-t)$-space in $\tilde{\pi} \setminus \tilde{\delta}$.
	An extended element of $S_2$ to $\PG(n,q)$, has at least a point in common with $\tilde{\sigma_0}$. 
	Hence, $|S_2|$ is the number of projective $k$-spaces through $\delta$, minus the number of $k$-spaces through $\delta$, disjoint to $\tilde{\sigma_0}$. By investigating the quotient space of $\tilde{\delta}\cap \tilde{\pi}$, and due to Lemma \ref{lemmadisjunct}, we have that $|S_2|=\qbin{n-t}{k-t}-q^{(k-t+1)(k-t)}\qbin{n-k-1}{k-t} $. Hence,
	
	\begin{align}
	|\mathcal{S}|
&=\theta_{k}-\theta_{k-t}+\qbin{n-t}{k-t}- q^{(k-t+1)(k-t)}\qbin{n-k-1}{k-t} 
	\end{align}
	
	On the other hand, we find that $|\mathcal{S}|=\theta_{k}+\sum_{j=0}^{k-t-2} Q_j(n,k,t)$, with $Q_j(n,k,t)=\{ \beta\in S_2| \beta \nsubseteq \langle \pi, \delta\rangle, \delta\subset \beta, \dim(\beta\cap (\pi\setminus \delta))=j \}$. Note that the first term is the number of $k$-spaces in $\langle \pi, \delta \rangle$ such that any two of these elements meet in an affine $(k-1)$-space.  Since there are $\qbin{k-t+1}{j+1}$ $j$-spaces in $\pi \setminus \delta$, and by using Lemma \ref{lemmadisjunct}, we find that 
	\begin{align}\label{Saffien1somformule}
	|\mathcal{S}|&=\theta_{k}+\sum_{j=0}^{k-t-2} \qbin{k-t+1}{j+1}q^{(k-t-j)(k-t-j-1)}\qbin{n-k-1}{k-t-j-1}.
	\end{align}


	It is clear that all elements of $S_2$ pairwise meet in at least a $t$-space ($\delta$). Consider now two elements $\pi_1, \pi_2\in S_1$. It follows, from the Grassmann dimension property,  that $\tilde{\pi}_1\cap \tilde{\pi}_2$ is a $(k-1)$-space in the $(k+1)$-space $\langle \tilde{\pi}, \tilde{\delta}\rangle$. This $(k-1)$-space is not contained in $H_\infty$ by the definition of $S_1$.
	Consider now a $k$-space $\pi_1$ in $S_1$ and a $k$-space $\pi_3$ in $S_2$. Note that $\tilde{\pi}_1 \subset \langle \tilde{\pi}, \tilde{\delta}\rangle$, and $\tilde{\pi}_3$ meets $\langle \tilde{\pi}, \tilde{\delta}\rangle$ in at least a $(t+1)$-space. Again, due to the Grassmann dimension property, it follows that $\tilde{\pi}_1\cap \tilde{\pi}_3$ is at least a $t$-space in $\langle \tilde{\pi}, \tilde{\delta}\rangle$. This $t$-space is not contained in $H_\infty$ as $\tilde{\pi}_1$ does not contain $\tilde{\delta}\cap H_\infty$.
	
	Now we prove that $\mathcal{S}$ cannot be extended to a larger set of $k$-spaces pairwise intersecting in at least an affine $t$-space. Suppose that $\alpha \notin \mathcal{S}$ is an affine $k$-space that meets every element of $\mathcal{S}$ in at least an affine $t$-space. If $\alpha$ contains $\delta$, then, since $\alpha \notin \mathcal{S}$, we know that $\alpha \cap \langle \pi, \delta \rangle=\delta$. Hence, $\alpha$ meets $\pi$ only in a $(t-1)$-space, a contradiction. So we can suppose that  $\delta \nsubseteq \alpha$.
	 Hence, $\dim(\alpha \cap \delta)\leq t-1$, and note that $\dim(\alpha\cap \langle \pi, \delta\rangle)\geq t+1$ as $\alpha$ meets all elements of $S_1$ in at least a $t$-space.  Let $\sigma_0$ be a $(k-t)$-space in $\pi\setminus \delta$.  For every point $P\in \sigma_0$, consider the set $\mathcal{S}_P$ of elements of $\mathcal{S}$ that meet $\langle \pi, \delta\rangle$ in $\langle \delta, P \rangle$. If $\dim (\alpha\cap \langle \delta, P \rangle)<t$, then $\alpha$ must meet all elements of $\mathcal{S}_P$ in a subspace outside of $\langle \pi, \delta\rangle$. This gives a contradiction since $n>2k-t$. Hence, $\dim (\alpha\cap \langle \delta, P \rangle)=t$ for all points $P\in \sigma_0$. This implies that $\dim(\alpha\cap \delta)=t-1$, and $\alpha$ must have a $t$-space in comon with all $(t+1)$-spaces $\langle \delta, P \rangle$ with $P\in \sigma_0$. Hence, $\alpha \in  \langle \pi, \delta \rangle$, and so $\alpha\in S_1$, a contradiction. 
	
\end{proof}

\begin{example}\label{example2}
	Suppose $k>t$. Let $\Gamma$ be an affine $(t+2)$-space in $AG(n,q)$, and let $\mathcal{R}$ be a maximal set of $\theta_{t+1}$ affine $(t+1)$-spaces in $\Gamma$ such that for every two distinct elements $\sigma_1, \sigma_2\in \mathcal{R}$, $\tilde{\sigma}_1\cap H_\infty \neq \tilde{\sigma}_2\cap H_\infty$. Note that every two different elements of $R$ meet in an affine $t$-space. Let $\mathcal{S}$ be the set of all $k$-spaces in $AG(n,q)$, containing $\Gamma$ or  meeting $\Gamma$ in an element of $\mathcal{R}$.
\end{example}

\begin{lemma}
	The set $\mathcal{S}$, described in Example \ref{example2}, is a {maximal} set of $k$-spaces in $\AG(n,q)$, $n>2k-t$, pairwise intersecting in at least a $t$-space, of size 
	\begin{align*}|\mathcal{S}|
	&=\qbin{n-t-2}{k-t-2} \left( 1+ \theta_{t+1}q^{k-t-1}\frac{q^{n-k}-1}{q^{k-t-1}-1}  \right).
	\end{align*}
\end{lemma}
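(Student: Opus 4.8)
The plan is to prove the three assertions in turn: the cardinality formula, the pairwise $t$-intersection property, and maximality. Throughout I would pass to the projective extension and count there, using that an affine $k$-space is precisely a projective $k$-space not contained in $H_\infty$, and I would assume $k>t+1$ (the case $k=t+1$ being the degenerate one of the preceding remark, where $\mathcal{S}=\mathcal{R}$).

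For the size, I would split $\mathcal{S}$ according to whether a $k$-space contains $\Gamma$ or meets it in an element of $\mathcal{R}$. The affine $k$-spaces through $\Gamma$ are exactly the projective $k$-spaces through $\tilde{\Gamma}$ (all avoiding $H_\infty$ since $\tilde{\Gamma}\not\subseteq H_\infty$), so there are $\qbin{n-t-2}{k-t-2}$ of them. For a fixed $\sigma\in\mathcal{R}$, a $k$-space meets $\Gamma$ exactly in $\sigma$ iff it passes through $\sigma$ but not through $\Gamma$; as $\sigma$ has codimension $1$ in $\Gamma$, this count is $\qbin{n-t-1}{k-t-1}-\qbin{n-t-2}{k-t-2}$. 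Summing over the $\theta_{t+1}$ members of $\mathcal{R}$ yields
\[
|\mathcal{S}| = \qbin{n-t-2}{k-t-2} + \theta_{t+1}\left(\qbin{n-t-1}{k-t-1} - \qbin{n-t-2}{k-t-2}\right),
\]
and the closed form follows from the identity $\qbin{n-t-1}{k-t-1}=\frac{q^{n-t-1}-1}{q^{k-t-1}-1}\qbin{n-t-2}{k-t-2}$ after factoring out $\qbin{n-t-2}{k-t-2}$ and simplifying $\frac{q^{n-t-1}-1}{q^{k-t-1}-1}-1=q^{k-t-1}\frac{q^{n-k}-1}{q^{k-t-1}-1}$.

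For the $t$-intersection property I would distinguish cases by how two chosen elements meet $\Gamma$. If both contain $\Gamma$, or one contains $\Gamma$ and the other meets it in some $\sigma\in\mathcal{R}$, the intersection already contains $\sigma$ (resp. $\Gamma$), of dimension $\geq t+1$. The only case needing the defining property of $\mathcal{R}$ is that of two $k$-spaces meeting $\Gamma$ in distinct $\sigma_1,\sigma_2\in\mathcal{R}$: then the intersection contains $\sigma_1\cap\sigma_2$, and I must check this is an affine $t$-space. Projectively $\tilde{\sigma}_1\cap\tilde{\sigma}_2$ is a $t$-space of $\tilde{\Gamma}$; were it inside $H_\infty$ it would lie in both $t$-spaces $\tilde{\sigma}_i\cap H_\infty$, forcing $\tilde{\sigma}_1\cap H_\infty=\tilde{\sigma}_2\cap H_\infty$, against the definition of $\mathcal{R}$. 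Hence $\sigma_1\cap\sigma_2$ is affine of dimension $t$.

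The maximality is the main obstacle, and the affine setting is genuinely more delicate than the projective Example \ref{example2p}, since $\mathcal{S}$ uses only the $\theta_{t+1}$ representatives in $\mathcal{R}$. Suppose $\alpha\notin\mathcal{S}$ meets every element of $\mathcal{S}$ in an affine $t$-space; then $\alpha\not\supseteq\Gamma$ and $\alpha\cap\Gamma\notin\mathcal{R}$. My first step is to produce a single $\sigma\in\mathcal{R}$ with $d:=\dim(\alpha\cap\sigma)\leq t-1$. If $\dim(\alpha\cap\Gamma)=t+1$, then $\sigma':=\alpha\cap\Gamma$ is an affine hyperplane of $\Gamma$ not in $\mathcal{R}$, so $\mathcal{R}$ contains the representative $\sigma$ with the same trace at infinity; these are parallel, giving $\alpha\cap\sigma=\sigma'\cap\sigma=\emptyset$. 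If $\dim(\alpha\cap\Gamma)\leq t-1$, every $\sigma\in\mathcal{R}$ works, while if $\dim(\alpha\cap\Gamma)=t$, then at most $\theta_1$ members of $\mathcal{R}$ contain this $t$-space, so (as $\theta_{t+1}>\theta_1$ for $t\geq 1$) some $\sigma\in\mathcal{R}$ avoids it and meets $\alpha$ in only a $(t-1)$-space. Projecting from $\tilde{\sigma}$ onto $Q:=\PG(n,q)/\tilde{\sigma}\cong\PG(n-t-2,q)$, the $k$-spaces $\beta\in\mathcal{S}$ with $\beta\cap\Gamma=\sigma$ correspond to the $(k-t-2)$-spaces $\bar{\beta}$ of $Q$ avoiding the single point $G:=\tilde{\Gamma}/\tilde{\sigma}$, while $\bar{\alpha}:=\langle\tilde{\alpha},\tilde{\sigma}\rangle/\tilde{\sigma}$ has dimension $k-d-1$; the requirement $\dim(\tilde{\alpha}\cap\tilde{\beta})\geq t$ forces $\dim(\bar{\alpha}\cap\bar{\beta})\geq t-d-1$. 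The final step is a dimension count in $Q$: since $n>2k-t$ one has $2k-d-1-n\leq t-d-2$, so there exists a $(k-t-2)$-space meeting $\bar{\alpha}$ in dimension at most $t-d-2$, and with the room available it can be taken off $G$; the corresponding $\beta\in\mathcal{S}$ is then met by $\alpha$ in dimension $<t$, a contradiction. I expect the fussiest point to be exactly this last clause, guaranteeing that the low-intersection $\bar{\beta}$ can be chosen to avoid the point $G$, which is where the hypothesis $n>2k-t$ is used to its full strength.
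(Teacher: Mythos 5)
Your computation of $|\mathcal{S}|$ and your verification of the pairwise $t$-intersection property are correct and essentially identical to the paper's (your check that $\sigma_1\cap\sigma_2$ cannot lie in $H_\infty$ is the observation the paper builds into the definition of $\mathcal{R}$). The divergence is in the maximality argument: the paper disposes of it in three lines by asserting that, for each $\sigma\in\mathcal{R}$, meeting all $k$-spaces through $\sigma$ in an affine $t$-space forces $\alpha$ to contain a $t$-space of $\sigma$ (citing $n>2k-t$), and then invoking the maximality of $\mathcal{R}$; you try to carry out that forcing explicitly via a quotient count. The plan is sound, but your central inequality is false as stated.

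You set $d=\dim(\alpha\cap\sigma)$, the \emph{affine} intersection dimension, and claim that $\dim(\tilde{\alpha}\cap\tilde{\beta})\geq t$ forces $\dim(\bar{\alpha}\cap\bar{\beta})\geq t-d-1$. Projection from $\tilde{\sigma}$ loses $d'+1$ dimensions where $d':=\dim(\tilde{\alpha}\cap\tilde{\sigma})$ is the \emph{projective} intersection dimension; the correct statements are $\dim\bar{\alpha}=k-d'-1$ and $\dim(\bar{\alpha}\cap\bar{\beta})\geq \dim(\tilde{\alpha}\cap\tilde{\beta})-d'-1$. In your own first (and main) case, where $\dim(\alpha\cap\Gamma)=t+1$ and $\sigma$ is the parallel representative, one has $d=-1$ but $d'=t$, because $\tilde{\sigma}\cap H_\infty=\tilde{\sigma}'\cap H_\infty\subset\tilde{\alpha}$. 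Consequently \emph{every} $\beta\supseteq\sigma$ satisfies $\dim(\tilde{\alpha}\cap\tilde{\beta})\geq t$, so the conclusion you aim for (a $\beta$ with projective intersection $<t$) is unattainable, and achieving $\dim(\bar{\alpha}\cap\bar{\beta})\leq t-d-2=t-1$ yields no contradiction. The fix is to argue about \emph{where} the intersection lies rather than only how large it is: since $(k-t-2)+(k-d'-1)-(n-t-2)=2k-d'-1-n\leq-2$ here, you can choose $\bar{\beta}$ disjoint from $\bar{\alpha}$ (and off $G$); then $\tilde{\alpha}\cap\tilde{\beta}\subseteq\tilde{\sigma}$, hence $\tilde{\alpha}\cap\tilde{\beta}=\tilde{\alpha}\cap\tilde{\sigma}$, whose affine part is $\alpha\cap\sigma$ of dimension $d\leq t-1$ — the excess sits entirely in $H_\infty$ and does not count affinely. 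In general you need both branches: if $2k-d'-1-n\geq 0$, the generic quotient intersection is at most $t-d'-2$ (using $n>2k-t$) and already $\dim(\tilde{\alpha}\cap\tilde{\beta})\leq 2k-n\leq t-1$; if it is negative, you are in the situation just described and must fall back on the affine dimension $d$. With that affine/projective distinction made explicit your argument closes; as written, it does not.
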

\begin{proof}
	Since $\mathcal{R}$ is a maximal set, we have that $|\mathcal{R}|$ is the number of all $t$-spaces in $\tilde{\Gamma} \cap H_\infty$. Hence,  $|\mathcal{R}|=\theta_{t+1}$. 
	The number of elements in $\mathcal{S}$ is the number of $k$-spaces through $\Gamma$,  together with the number of $k$-spaces, meeting $\Gamma$ in an element of $\mathcal{R}$: \\
 \begin{align*}|\mathcal{S}|&=\qbin{n-t-2}{k-t-2}+\theta_{t+1}\cdot\left(\qbin{n-t-1}{k-t-1}-\qbin{n-t-2}{k-t-2}\right)\\ 
 &=\qbin{n-t-2}{k-t-2} \left( 1+ \theta_{t+1}q^{k-t-1}\frac{q^{n-k}-1}{q^{k-t-1}-1}  \right).
\end{align*}

	Consider two elements $\pi_1, \pi_2\in \mathcal{S}$. If $\pi_1$ or $\pi_2$ contains $\Gamma$, then $\pi_1$ and $\pi_2$ intersect in at least a $(t+1)$-dimensional space. Hence, we suppose that $\pi_1 \cap \Gamma$ and $\pi_2\cap \Gamma$ are two $(t+1)$-spaces of $\mathcal{R}$ in a $(t+2)$-space. Since every two elements of $\mathcal{R}$ meet  in an affine space with dimension at least $t$, we have that $\pi_1$ and $\pi_2$ meet in at least an affine $t$-space. 
	
	Now we prove that $\mathcal{S}$ cannot be extended to a larger set of $k$-spaces pairwise intersecting in at least a $t$-space. Suppose that $\alpha \notin \mathcal{S}$ is an affine $k$-space that meets every element of $\mathcal{S}$ in at least a $t$-space.
	Consider an element $\sigma\in \mathcal{R}$. Since $\alpha$ must meet all affine $k$-spaces through $\sigma$, we find that $\alpha$ contains a $t$-space of $\sigma$, as $n>2k-t$. As $\sigma$ is an arbitrary element of $\mathcal{R}$, we see that $\alpha$ must meet every element of $\mathcal{R}$ in at least an affine $t$-space. This implies that $\alpha$ meets $\Gamma$ in a $(t+1)$-space $\alpha_\Gamma$. Now, $\alpha_\Gamma$ must meet every element of $\mathcal{R}$ in an affine $t$-space. Due to the maximality of $\mathcal{R}$, we have that $\alpha_\Gamma \in \mathcal{R}$, and so that $\alpha \in \mathcal{S}$, a contradiction.
	
\end{proof}


\section{Classification results}\label{sectionclassi}
We start with a classification result on maximal sets of $k$-spaces pairwise intersecting in a $(k-1)$-space. In the projective case, the following classification result was proven by Brouwer, Cohen and Neumaier, based on the link with distance regular graphs.

\begin{theorem}{\textbf{\cite[Section 9.3]{brouwer}}}\label{basisEKRthm}
	Let $\mathcal{S}$ be a set of projective $k$-spaces, pairwise intersecting in a $(k-1)$-space in $\PG(n,q)$, $n\geq k+2$, then all the $k$-spaces of $\mathcal{S}$ go through a fixed $(k-1)$-space or they are contained in a fixed $(k + 1)$-space.
\end{theorem}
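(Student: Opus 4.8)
The plan is to fix two members of $\mathcal{S}$ and to show that every other member is forced into one of exactly two configurations relative to them. First I note that two distinct $k$-spaces meeting in at least a $(k-1)$-space must in fact meet in exactly a $(k-1)$-space, so for distinct $\pi_1,\pi_2\in\mathcal{S}$ the intersection $\pi_1\cap\pi_2$ is always a $(k-1)$-space; in particular I may assume $|\mathcal{S}|\geq 3$, since for $|\mathcal{S}|\leq 2$ both conclusions hold trivially. Fixing $A,B\in\mathcal{S}$, I set $M:=A\cap B$, a $(k-1)$-space, and $\Pi:=\langle A,B\rangle$; by the Grassmann identity $\dim\Pi = 2k-(k-1)=k+1$, so $\Pi$ is a $(k+1)$-space (this is where $n\geq k+1$ enters).

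The central step is the dichotomy: every $C\in\mathcal{S}$ either contains $M$ or is contained in $\Pi$. To establish it I would examine $C\cap\Pi$. Since $C\cap A$ is a $(k-1)$-space contained in $C\cap\Pi$, we have $\dim(C\cap\Pi)\geq k-1$. If $C\subseteq\Pi$ we are in the second alternative. Otherwise $\dim(C\cap\Pi)\leq k-1$, hence $C\cap\Pi$ is a $(k-1)$-space; as it contains both of the $(k-1)$-spaces $C\cap A$ and $C\cap B$, all three coincide, so $C\cap A=C\cap B\subseteq A\cap B=M$, and by equality of dimensions $C\cap A=M$. Thus $M\subseteq C$, the first alternative.

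It then remains to rule out a genuinely mixed family. I would write $\mathcal{A}=\{C\in\mathcal{S}:M\subseteq C\}$ and $\mathcal{B}=\{C\in\mathcal{S}:C\subseteq\Pi\}$; the dichotomy gives $\mathcal{S}=\mathcal{A}\cup\mathcal{B}$, and $A,B$ lie in both. Suppose, for contradiction, that there exist $C_1\in\mathcal{A}\setminus\mathcal{B}$ and $C_2\in\mathcal{B}\setminus\mathcal{A}$. Since $M\subseteq C_1$ but $C_1\not\subseteq\Pi$, I compute $C_1\cap\Pi=M$ (it contains $M$ and has dimension at most $k-1$). As $C_2\subseteq\Pi$, this forces $C_1\cap C_2=M\cap C_2$; but $M\not\subseteq C_2$ gives $\dim(M\cap C_2)\leq k-2$, contradicting that $C_1$ and $C_2$ meet in a $(k-1)$-space. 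Hence $\mathcal{S}=\mathcal{A}$, i.e. all members pass through the fixed $(k-1)$-space $M$, or $\mathcal{S}=\mathcal{B}$, i.e. all members lie in the fixed $(k+1)$-space $\Pi$, which is the desired conclusion.

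I do not expect a deep obstacle here, as the whole argument is a short sequence of Grassmann dimension counts carried out inside the $(k+1)$-space $\Pi$. The one point that requires care is making the dichotomy genuinely exhaustive and then verifying that the two families $\mathcal{A}$ and $\mathcal{B}$ cannot both contribute strictly; this is precisely the step where the hypothesis that the pairwise intersections are \emph{full} $(k-1)$-spaces (rather than merely meeting in at least a smaller $t$-space) is essential, and it is what drives the final contradiction.
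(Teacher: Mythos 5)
Your proof is correct. The paper itself does not prove this statement: it is quoted from Brouwer--Cohen--Neumaier, Section 9.3, where it arises from the classification of maximal cliques in Grassmann graphs (the two alternatives being the two families of maximal cliques, ``stars'' and ``tops''), i.e.\ via distance-regular-graph machinery. Your argument replaces that external reference by a short, self-contained Grassmann dimension count: the dichotomy ``$C\supseteq M$ or $C\subseteq\Pi$'' relative to a fixed pair $A,B$, followed by the observation that a genuinely mixed family would contain two members meeting in at most a $(k-2)$-space. Every step checks out, including the key computations $\dim(C\cap\Pi)=k-1$ when $C\not\subseteq\Pi$ (forcing $C\cap A=C\cap B=M$) and $C_1\cap C_2=M\cap C_2$ in the mixed case. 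Two trivial remarks: for $C\in\{A,B\}$ both alternatives hold simultaneously, so the dichotomy argument need only be run for $C\neq A,B$ (where $C\cap A$ is indeed a $(k-1)$-space); and your proof in fact only needs $n\geq k+1$, the stronger hypothesis $n\geq k+2$ in the statement serving only to make the two alternatives genuinely distinct. What your approach buys is independence from the cited literature; what the citation buys is brevity and a place in a broader classification. Either is acceptable here.
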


We used the above classification to deduce the classification of maximal sets of $k$-spaces pairwise intersecting in a $(k-1)$-space in $\AG(n,q)$. 

\begin{theorem}
	Let $\mathcal{S}$ be a set of $k$-spaces in $\AG(n,q)$, $n>k$, pairwise intersecting in a $(k-1)$-space such that $\mathcal{S}$ is not a $(k-1)$-pencil, 
	then $|\mathcal{S}|\leq \theta_k$, and equality occurs if and only if 
	$\mathcal{S}$ is Example \ref{example2}. Hence, 
	all elements of $\mathcal{S}$ are contained in a $(k+1)$-space.
\end{theorem}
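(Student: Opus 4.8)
The plan is to lift the configuration to $\PG(n,q)$ and apply the projective classification in Theorem~\ref{basisEKRthm}. First I would pass to the projective closures $\tilde{\mathcal{S}}=\{\tilde{\pi}:\pi\in\mathcal{S}\}$. For distinct $\pi_1,\pi_2\in\mathcal{S}$ the affine intersection $\pi_1\cap\pi_2$ is an affine $(k-1)$-space, so its closure $\widetilde{\pi_1\cap\pi_2}$ is a projective $(k-1)$-space avoiding $H_\infty$ and contained in $\tilde{\pi}_1\cap\tilde{\pi}_2$; since the closure map is injective on affine subspaces, $\tilde{\pi}_1\neq\tilde{\pi}_2$, whence $\dim(\tilde{\pi}_1\cap\tilde{\pi}_2)\le k-1$ and therefore $\dim(\tilde{\pi}_1\cap\tilde{\pi}_2)=k-1$. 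Thus $\tilde{\mathcal{S}}$ is a family of projective $k$-spaces pairwise meeting in exactly a $(k-1)$-space, with the additional feature that none of these $(k-1)$-spaces lies in $H_\infty$.

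If $n=k+1$ the whole space $\PG(n,q)$ is a $(k+1)$-space and I would go directly to the counting below. For $n\ge k+2$, Theorem~\ref{basisEKRthm} gives a dichotomy: either all elements of $\tilde{\mathcal{S}}$ share a common projective $(k-1)$-space $\tau$, or they all lie in a common projective $(k+1)$-space $\Sigma$.

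I would dispose of the pencil case using $H_\infty$. If $\tau\subseteq H_\infty$, then each $\tilde{\pi}$ is a $k$-space off $H_\infty$ through the $(k-1)$-space $\tau$, forcing $\tilde{\pi}\cap H_\infty=\tau$ and hence $\tilde{\pi}_1\cap\tilde{\pi}_2=\tau\subseteq H_\infty$ for every pair, contradicting that these intersections avoid $H_\infty$. So $\tau\not\subseteq H_\infty$, and with $\tau_a=\tau\setminus H_\infty$ every $\pi\in\mathcal{S}$ contains the affine $(k-1)$-space $\tau_a$; that is, $\mathcal{S}$ lies in the $(k-1)$-pencil through $\tau_a$. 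Since every $k$-space through $\tau_a$ meets each element of $\mathcal{S}$ in at least $\tau_a$, a maximal $\mathcal{S}$ inside this pencil would be the whole pencil, contradicting the hypothesis that $\mathcal{S}$ is not a $(k-1)$-pencil; this excludes the pencil case.

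There remains the case that all $\tilde{\pi}$ lie in a fixed projective $(k+1)$-space $\Sigma$. As the $\tilde{\pi}$ avoid $H_\infty$ we have $\Sigma\not\subseteq H_\infty$, so $\Sigma_a=\Sigma\setminus H_\infty\cong\AG(k+1,q)$ is an affine $(k+1)$-space containing all of $\mathcal{S}$. Inside $\Sigma_a$ the elements of $\mathcal{S}$ are affine hyperplanes, and two of them meet in an affine $(k-1)$-space exactly when their traces on $\tilde{\Sigma}\cap H_\infty\cong\PG(k,q)$ are distinct $(k-1)$-spaces, i.e. when they are non-parallel; in particular the trace map $\pi\mapsto\tilde{\pi}\cap H_\infty$ is injective on $\mathcal{S}$. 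As $\PG(k,q)$ has exactly $\theta_k$ hyperplanes, this gives $|\mathcal{S}|\le\theta_k$, with all of $\mathcal{S}$ inside the $(k+1)$-space $\Sigma_a$. Equality forces the trace map to be a bijection onto all $(k-1)$-spaces of $\tilde{\Sigma}\cap H_\infty$, which is precisely the transversal set $\mathcal{R}$ defining Example~\ref{example2} for $t=k-1$; hence equality holds if and only if $\mathcal{S}$ is Example~\ref{example2}. The main obstacle I anticipate is not the counting but the case analysis around $H_\infty$: one must verify that the $(k-1)$-intersections genuinely avoid the hyperplane at infinity, both to rule out the degenerate at-infinity pencil and to identify the extremal family with $\mathcal{R}$.
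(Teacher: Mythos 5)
Your proposal is correct and follows essentially the same route as the paper: extend to $\PG(n,q)$, apply Theorem \ref{basisEKRthm} to land in a $(k+1)$-space, and bound $|\mathcal{S}|$ by the injectivity of the trace map $\pi\mapsto\tilde{\pi}\cap H_\infty$ onto the $\theta_k$ hyperplanes of a $\PG(k,q)$ at infinity, with equality characterising Example \ref{example2} for $t=k-1$. You are in fact slightly more careful than the paper on two points it glosses over — the case $n=k+1$ (where Theorem \ref{basisEKRthm} does not directly apply) and the justification that no common $(k-1)$-space exists, which requires reading "not a $(k-1)$-pencil" as excluding subfamilies of pencils via maximality.
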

\begin{proof}
	As before, Let $\tilde{\mathcal{S}}$ be the set of  projective extensions of the elements in $\mathcal{S}$.  So, $\tilde{\mathcal{S}}$ is a set of projective $k$-spaces pairwise intersecting in a $(k-1)$-space, and such that there is no $(k-1)$-space contained in all these elements. Hence, $\tilde{\mathcal{S}}$ is contained in a $(k+1)$-space $\Pi$, by Theorem \ref{basisEKRthm}. 
	Now, every two elements of $\mathcal{S}$ must meet in $\AG(n,q)$. So, for every two elements $\pi_1,\pi_2 \in \mathcal{S}$, $\tilde{\pi}_1\cap \tilde{\pi}_2 \nsubseteq H_\infty$. This implies that every $k$-space in $\Pi\cap H_\infty$ is contained in precisely one element of $\tilde{\mathcal{S}}$. This is Example \ref{example2}, for $k=t+1$, which proves the theorem.
\end{proof}
\begin{remark}
	Note that for $t=k-1$, Example \ref{example} and Example \ref{example2} are similar. In both examples, all elements are contained in a fixed $(t+2)$-space $\Gamma$, and every projective $t$-space in $\tilde{\Gamma} \cap H_\infty$ is contained in a unique (extended) $k$-space of the set.  The two examples are not equal, since in Example \ref{example}, there is freedom to choose the affine $k$-space $\pi$, such that $\tilde{\pi}$ goes through a fixed $t$-space in $H_\infty$. 
\end{remark}
For $t=k-1$, the number of elements of Example \ref{example} is $\theta_{k}$, while, the number of affine subspaces in $\AG(n,q)$ through a fixed affine $(k-1)$-space is $\theta_{n-k}$. Hence, for $n<2k$, Example \ref{example} is the largest example of a set of affine $k$-spaces, pairwise intersecting in at least a $(k-1)$-space.\\

From now on, we suppose that $k>t+1$. In Section \ref{sectionproj} and Section \ref{sectionaffien}, we classify the largest non-trivial $t$-intersecting sets of $k$-spaces in $\PG(n,q)$ and $\AG(n,q)$, respectively. Several ideas in the following subsection are based on the notes of David Ellis \cite{ellis}.

\subsection{Classification result in $\PG(n,q)$}\label{sectionproj}

Let $\mathcal{S}_p$
 be a maximal set of $k$-spaces in $\PG(n,q)$, $n>2k-t$, $k>t+1$, pairwise meeting in at least a $t$-space. Let 
\begin{align*}
	 \psi(\mathcal{S}_p)=\min\{\dim(T)| T\subset \PG(n,q), \dim(T\cap \alpha)\geq t, \ \forall \alpha \in \mathcal{S}_p \}. \\
\end{align*}
Note that $\psi(S_p)$ is well-defined. Every element $\beta\in \mathcal{S}_p$ is an example of a subspace such that $\dim(\beta\cap \alpha)\geq t, \forall \alpha \in \mathcal{S}_p$.
Let $\mathcal{T}$ be the collection of all $\psi(\mathcal{S}_p)$-dimensional spaces in $\PG(n,q)$, that meet every element of $\mathcal{S}_p$ in at least a $t$-space. 
\begin{lemma}\label{remarks}
	\begin{enumerate}
		\item We have that $t\leq\psi(\mathcal{S}_p)\leq k$, and if $\psi(\mathcal{S}_p)=t$, then $\mathcal{S}_p$ is a $t$-pencil. 
		\item If $T\in  \mathcal{T}$, then all $k$-spaces through $T$ are contained in $\mathcal{S}_p$.
		\item The elements of  $\mathcal{T}$ are $t$-intersecting in $\PG(n,q)$.
	\end{enumerate}
\end{lemma}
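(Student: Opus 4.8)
The plan is to handle the three parts in the stated order, since the first two are quick and feed directly into the third.

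For part~1, I would bound $\psi(\mathcal{S}_p)$ from both sides. The lower bound is immediate: any subspace $T$ with $\dim(T\cap\alpha)\geq t$ must itself satisfy $\dim T\geq t$, so $\psi(\mathcal{S}_p)\geq t$. The upper bound is the observation recorded just before the lemma, namely that every $\beta\in\mathcal{S}_p$ meets each element of $\mathcal{S}_p$ (including itself) in at least a $t$-space, so $\beta$ is an admissible $T$ and $\psi(\mathcal{S}_p)\leq k$. For the equality case, suppose $\psi(\mathcal{S}_p)=t$ and let $T$ be a $t$-space meeting every $\alpha\in\mathcal{S}_p$ in at least a $t$-space; since $\dim T=t$ forces $T\cap\alpha=T$, every element of $\mathcal{S}_p$ contains $T$. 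Thus $\mathcal{S}_p$ is contained in the $t$-pencil through $T$, a family pairwise meeting in at least $T$; the maximality of $\mathcal{S}_p$ then forces equality, so $\mathcal{S}_p$ is exactly this $t$-pencil.

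For part~2, let $T\in\mathcal{T}$ and let $\gamma$ be any $k$-space with $T\subseteq\gamma$. For each $\alpha\in\mathcal{S}_p$ we have $T\cap\alpha\subseteq\gamma\cap\alpha$, hence $\dim(\gamma\cap\alpha)\geq\dim(T\cap\alpha)\geq t$. So $\gamma$ meets every element of $\mathcal{S}_p$ in at least a $t$-space, and maximality of $\mathcal{S}_p$ forces $\gamma\in\mathcal{S}_p$.

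Part~3 is the real content, and I would argue by contradiction. Write $s=\psi(\mathcal{S}_p)$, take $T_1,T_2\in\mathcal{T}$, set $d=\dim(T_1\cap T_2)$, and suppose $d<t$. The key computation is the minimal intersection of $T_2$ with a $k$-space through $T_1$: by the Grassmann identity $\dim(\gamma\cap T_2)=k+s-\dim\langle\gamma,T_2\rangle$, and as $\gamma$ ranges over $k$-spaces containing $T_1$ the span $\langle\gamma,T_2\rangle$ can be enlarged from $\langle T_1,T_2\rangle$ (dimension $2s-d$) using the $k-s$ free directions of $\gamma$, up to dimension $\min(n,\,k+s-d)$. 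Hence
\[
\min_{\substack{\gamma\supseteq T_1\\ \dim\gamma=k}}\dim(\gamma\cap T_2)=\max(d,\,k+s-n).
\]
Now $d<t$ by assumption, and $k+s-n<t$ because $s\leq k$ (part~1) together with $n>2k-t$ gives $k+s-n\leq 2k-n\leq t-1$. Therefore there is a $k$-space $\gamma\supseteq T_1$ with $\dim(\gamma\cap T_2)<t$; by part~2 this $\gamma$ lies in $\mathcal{S}_p$, but then $T_2\in\mathcal{T}$ must meet it in at least a $t$-space, a contradiction. Hence $d\geq t$ and $\mathcal{T}$ is $t$-intersecting.

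The step I expect to be the main obstacle is this dimension count in part~3: one must check both that the extremal $\gamma$ genuinely exists (the $k-s$ extra directions can be chosen generically to realise $\dim\langle\gamma,T_2\rangle=\min(n,k+s-d)$, the degenerate case $s=k$ giving simply $\gamma=T_1\in\mathcal{S}_p$, where the claim follows directly from the defining property of $\mathcal{T}$), and that the inequality $k+s-n<t$ is exactly what the standing hypothesis $n>2k-t$ provides. This is precisely where the bound on $n$ enters, so I would isolate the estimate $k+s-n\leq 2k-n\leq t-1$ to make the contradiction clean.
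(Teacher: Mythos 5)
Your proposal is correct and follows essentially the same route as the paper: parts 1 and 2 are the same immediate observations, and part 3 rests on the same key step of extending an element of $\mathcal{T}$ to a $k$-space that fails to $t$-intersect, which lies in $\mathcal{S}_p$ by part 2, using $n>2k-t$. The only (harmless) variation is that you extend just $T_1$ to a single $\gamma$ and contradict the defining property of $T_2\in\mathcal{T}$, whereas the paper extends both $T_1$ and $T_2$ to $k$-spaces $\pi_1,\pi_2\in\mathcal{S}_p$ with $\dim(\pi_1\cap\pi_2)<t$; your explicit dimension count $\min_\gamma\dim(\gamma\cap T_2)=\max(d,k+s-n)$ in fact supplies the justification the paper leaves implicit.
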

\begin{proof}
	\begin{enumerate}
		\item Let $\pi_1\in \mathcal{S}_p$. Since every element of $\mathcal{S}_p$ meets $\pi_1$ in at least a $t$-space, we have that $\psi(\mathcal{S}_p)\leq k$. Let $T\in \mathcal{T}$. Since all elements of $\mathcal{S}_p$ meet $T$ in at least a $t$-space, we have that $\psi(\mathcal{S}_p)\geq t$. If $\psi(\mathcal{S}_p)=t$, then all elements of $\mathcal{S}_p$ contain the $t$-space $T$, and hence, $\mathcal{S}_p$ is a $t$-pencil.
		\item This property follows from the maximality of $\mathcal{S}_p$.
		\item Suppose that there are two elements $T_1, T_2 \in \mathcal{T}$, with $\dim(T_1\cap T_2)=l<t$. Let $\pi_i$ be a $k$-space through $T_i$, $i=1,2$, such that $\dim(\pi_1 \cap \pi_2)<t$. Note that we can find these $k$-spaces since $n>2k-t$. From the second item, we have that $\pi_1, \pi_2\in \mathcal{S}_p$, a contradiction since they have no $t$-space in common.
	\end{enumerate}
\end{proof}
\begin{lemma}\label{lemmacounting1}
	Let $\psi(\mathcal{S}_p)=t+x,\  x\geq2$, $k>t+1$ and $n> 2k-t$. Then the number of elements of $\mathcal{S}_p$ through a projective $(t+x-j)$-space, with $j\in \{0,1,2,\dots, x\}$, is at most
	$(\theta_{k-t})^j \qbin{n-t-x}{k-t-x}.$ 
\end{lemma}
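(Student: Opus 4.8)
The plan is to prove the statement by induction on $j$, establishing for every projective $(t+x-j)$-space $U$ that the number $N(U)$ of elements of $\mathcal{S}_p$ containing $U$ is at most $(\theta_{k-t})^{j}\qbin{n-t-x}{k-t-x}$. The base case $j=0$ is immediate: then $U$ is a $(t+x)$-space, and $N(U)$ is bounded by the total number of $k$-spaces through $U$, which is exactly $\qbin{n-t-x}{k-t-x}$, independently of $\mathcal{S}_p$.

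For the inductive step I fix a $(t+x-j)$-space $U$ with $1\le j\le x$, so that $\dim(U)=t+x-j<t+x=\psi(\mathcal{S}_p)$. By the minimality in the definition of $\psi(\mathcal{S}_p)$, a subspace of dimension strictly below $\psi(\mathcal{S}_p)$ cannot meet every element of $\mathcal{S}_p$ in at least a $t$-space; hence there is an element $\beta\in\mathcal{S}_p$ with $\dim(U\cap\beta)=d\le t-1$. The goal is to show that every $\alpha\in\mathcal{S}_p$ through $U$ contains one of at most $\theta_{k-t}$ distinct $(t+x-j+1)$-spaces through $U$, after which the induction hypothesis applied to each of these spaces finishes the argument.

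To produce these spaces, I would choose inside $\beta$ a $(k-t)$-space $\sigma$ disjoint from $U$. This is possible because $\sigma$ need only avoid $U\cap\beta$, which is a $d$-space with $(k-t)+d<k=\dim(\beta)$ since $d\le t-1$; then $\sigma\cap U=\sigma\cap(U\cap\beta)=\emptyset$. Now let $\alpha\in\mathcal{S}_p$ with $U\subseteq\alpha$. The subspace $\alpha\cap\beta$ has dimension at least $t$ inside the $k$-space $\beta$, so by the Grassmann dimension property it meets the $(k-t)$-space $\sigma$ in at least a point $P_\alpha$. Since $P_\alpha\in\sigma\subseteq\beta$ and $P_\alpha\in\alpha$ while $P_\alpha\notin U$, the span $\langle U,P_\alpha\rangle$ is a $(t+x-j+1)$-space with $U\subseteq\langle U,P_\alpha\rangle\subseteq\alpha$. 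As $\sigma\cap U=\emptyset$, projection from $U$ is injective on $\sigma$, so distinct points of $\sigma$ give distinct spans; hence the number of spaces $\langle U,P_\alpha\rangle$ arising this way is at most the number of points of $\sigma$, namely $\theta_{k-t}$. Since each $\alpha$ through $U$ contains at least one such span, summing the induction hypothesis over these at most $\theta_{k-t}$ spaces of dimension $t+x-(j-1)$ yields
\[
N(U)\le \theta_{k-t}\cdot(\theta_{k-t})^{j-1}\qbin{n-t-x}{k-t-x}=(\theta_{k-t})^{j}\qbin{n-t-x}{k-t-x}.
\]

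The main obstacle is precisely this span-counting step. A naive approach projects all of $\beta$ from $U$ and produces $\theta_{k-d-1}$ candidate spans, which is too many whenever $d<t-1$, so one is tempted to first locate an element $\beta$ meeting $U$ in exactly a $(t-1)$-space. The observation that avoids this difficulty is that one never needs all of $\beta$: a single $(k-t)$-space $\sigma\subseteq\beta$ disjoint from $U$ already meets $\alpha\cap\beta$ for every $\alpha\supseteq U$, simply because $\dim(\alpha\cap\beta)\ge t$ forces a nonempty intersection with $\sigma$ inside $\beta$. Thus it suffices to enumerate the $\theta_{k-t}$ points of $\sigma$, and the bound holds for every admissible value of $d$.
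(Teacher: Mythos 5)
Your proof is correct, and it follows the same overall strategy as the paper: induction on $j$, with the inductive step carried out by locating a witness $\beta\in\mathcal{S}_p$ meeting the $(t+x-j)$-space $U$ in at most a $(t-1)$-space (guaranteed by the minimality of $\psi(\mathcal{S}_p)$) and then covering all members of $\mathcal{S}_p$ through $U$ by at most $\theta_{k-t}$ one-step extensions $\langle U,P\rangle$. The one place where you genuinely improve on the paper's exposition is the inductive step: the paper splits into two cases according to $d=\dim(U\cap\beta)$, handling $d=t-1$ exactly as you do, but for $d\le t-2$ it introduces the quantity $l$ with $\max\dim(\gamma_j\cap\pi)=t-l$, covers the elements through $\gamma_j$ by $(l-1)$-spaces of a $(k-t+l-1)$-dimensional complement, applies the induction hypothesis $l$ levels down, and then needs the extra estimate $\qbin{k-t+l}{l}(\theta_{k-t})^{j-l}\le(\theta_{k-t})^j$. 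Your observation that a $(k-t)$-space $\sigma\subseteq\beta$ disjoint from $U$ exists whenever $d\le t-1$, and that $\dim(\alpha\cap\beta)\ge t$ already forces $\alpha$ to meet $\sigma$ in a point by Grassmann inside the $k$-space $\beta$, makes the case distinction and the auxiliary inequality unnecessary while yielding the same bound. Both arguments are valid; yours is shorter and arguably cleaner.
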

\begin{proof} 
	Let $\psi(\mathcal{S}_p)=t+x, \ x\geq 2$. 
	We prove, by induction on $j\in \{0,1,2,\dots, x\}$, that the number of $k$-spaces of ${\mathcal{S}_p}$  through a  $(t+x-j)$-space is at most $ \qbin{n-t-x}{k-t-x}(\theta_{k-t})^j$. Note that, by Lemma \ref{remarks}$(2)$, the statement is true for $j=0$. 
	Let  $j\in \{1,2, 3,\dots, x\}$ and suppose now that the number of $k$-spaces of ${\mathcal{S}_p}$ through a projective $(t+x-j_0)$-space, is at most $(\theta_{k-t})^{j_0} \qbin{n-t-x}{k-t-x}$, for all $j_0<j$. Then we prove that this also holds for $j$.
	Consider a projective $(t+x-j)$-space $\gamma_j$. 
	Since $\psi(\mathcal{S}_p)=t+x$, we know that there exists a $k$-space $\pi_j$ of ${\mathcal{S}_p}$, meeting $\gamma_j$ in at most a $(t-1)$-space. 
	Suppose first that $\dim(\pi_j\cap \gamma_j)=t-1$ and let $\pi_{j\gamma}$ be a projective $(k-t)$-space in $\pi_j \setminus \gamma_j$. Then every element of ${\mathcal{S}_p}$ through $\gamma_j$ contains at least a point of $\pi_{j\gamma}$. Since there are $\theta_{k-t}$ points in $\pi_{j\gamma}$, and since the number of projective $k$-spaces through a $(t+x-j+1)$-space is at most $(\theta_{k-t})^{j-1}\qbin{n-t-x}{k-t-x}$, we find that the number of elements of ${\mathcal{S}_p}$ through $\gamma_j$ is at most $ (\theta_{k-t})^{j} \qbin{n-t-x}{k-t-x}.$
	
Suppose now that every element $\pi$ of ${\mathcal{S}_p}$ meets $\gamma_j$ in a $t$-space or in at most a $(t-2)$-space. 
Let $\max\{\dim(\gamma_j\cap \pi)|\pi \in {\mathcal{S}_p}, \dim(\gamma_j\cap \pi)< t \}=t-l$, then $l\geq 2$,  and suppose that $\pi_j\in \mathcal{S}_p$ is an element such that $\dim(\pi_j\cap \gamma_j)=t-l$. Let $\pi_{j\gamma}$ be a projective $(k-t+l-1)$-space in $\pi_j \setminus \gamma_j$.  Then every element of $\mathcal{S}_p$
through $\gamma_j$ contains at least an $(l-1)$-space of $\pi_{j\gamma}$. Since there are $\qbin{k-t+l}{l}$ $(l-1)$-spaces in $\pi_{j\gamma}$, and since the number of projective $k$-spaces through a $(t+x-j+l)$-space is at most $(\theta_{k-t})^{j-l}\qbin{n-t-x}{k-t-x}$, we find that the number of elements of ${\mathcal{S}_p}$ through $\gamma_j$ is at most 
 $ \qbin{k-t+l}{l} (\theta_{k-t})^{j-l}\qbin{n-t-x}{k-t-x}$. Note that 
\begin{align*}
	\qbin{k-t+l}{l} (\theta_{k-t})^{j-l}= \frac{(q^{k-t+l}-1)\dots (q^{k-t+1}-1)}{(q^l-1)\dots (q-1)} (\theta_{k-t})^{j-l} \leq  \left(\frac{(q^{k-t+1}-1)}{(q-1)}\right)^l (\theta_{k-t})^{j-l}=(\theta_{k-t})^{j}
\end{align*}
		Hence, also in this case, 
		we find that the number of elements of ${\mathcal{S}_p}$ through $\gamma_j$ is at most $ (\theta_{k-t})^{j} \qbin{n-t-x}{k-t-x}.$
\end{proof}

\begin{lemma}\label{lemmacountingproj}
	Let $\mathcal{S}_p$ be a set of $k$-spaces, pairwise intersecting in at least a $t$-space in $\PG(n,q)$. If $\psi(\mathcal{S}_p)=t+x,\  x\geq2$, $k>t+1$ and $n> 2k-t$, then 
	$|\mathcal{S}_p|\leq (\theta_{k-t})^x\qbin{n-t-x}{k-t-x}\qbin{t+x+1}{t+1}  .$ 
\end{lemma}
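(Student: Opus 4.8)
The plan is to anchor the count on a single element of $\mathcal{T}$ and then distribute the elements of $\mathcal{S}_p$ over the $t$-subspaces of that element. Since $\psi(\mathcal{S}_p)=t+x$ is realized, there exists a $(t+x)$-space $T\in\mathcal{T}$, i.e. a $(t+x)$-space with $\dim(T\cap\alpha)\ge t$ for every $\alpha\in\mathcal{S}_p$. For each $\alpha\in\mathcal{S}_p$ the intersection $T\cap\alpha$ is a subspace of $T$ of dimension at least $t$, so $\alpha$ contains at least one $t$-space that lies inside $T$. Hence every element of $\mathcal{S}_p$ is counted among the $k$-spaces of $\mathcal{S}_p$ passing through some $t$-subspace of $T$.

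First I would write this as a union/double-counting bound. Let $\mathcal{T}_t$ denote the set of $t$-spaces contained in $T$. Then
\begin{align*}
|\mathcal{S}_p|\ \le\ \sum_{\tau\in\mathcal{T}_t}\bigl|\{\alpha\in\mathcal{S}_p : \tau\subseteq\alpha\}\bigr|.
\end{align*}
A $(t+x)$-space contains exactly $\qbin{t+x+1}{t+1}$ subspaces of dimension $t$, so $|\mathcal{T}_t|=\qbin{t+x+1}{t+1}$. Next I would invoke Lemma \ref{lemmacounting1} with $j=x$, which is available since $t+x-x=t$ and the hypotheses $x\ge 2$, $k>t+1$, $n>2k-t$ are exactly those assumed here: the number of elements of $\mathcal{S}_p$ through any fixed projective $t$-space is at most $(\theta_{k-t})^{x}\qbin{n-t-x}{k-t-x}$. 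Substituting this uniform bound into each summand yields
\begin{align*}
|\mathcal{S}_p|\ \le\ \qbin{t+x+1}{t+1}\,(\theta_{k-t})^{x}\qbin{n-t-x}{k-t-x},
\end{align*}
which is exactly the claimed inequality.

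There is essentially no deep obstacle here; the only point that needs a word of care is that an element $\alpha$ meeting $T$ in dimension strictly greater than $t$ contains several $t$-subspaces of $T$ and is therefore counted more than once in the sum above. This overcounting only weakens the upper bound, so it is harmless for the inequality we want, and I would simply note it rather than correct for it. I would also remark that $\mathcal{T}\neq\varnothing$, so that the anchor $T$ genuinely exists: this is immediate because $\psi(\mathcal{S}_p)$ is well-defined (as observed before Lemma \ref{remarks}, any $\beta\in\mathcal{S}_p$ already meets every element of $\mathcal{S}_p$ in at least a $t$-space), and $\mathcal{T}$ collects precisely the minimizers of dimension $\psi(\mathcal{S}_p)=t+x$. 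With these two remarks in place the argument is complete.
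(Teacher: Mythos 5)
Your proof is correct and follows essentially the same route as the paper: fix a $(t+x)$-space $T\in\mathcal{T}$, observe that every element of $\mathcal{S}_p$ contains one of the $\qbin{t+x+1}{t+1}$ $t$-subspaces of $T$, and apply Lemma \ref{lemmacounting1} with $j=x$ to each. The extra remarks on overcounting and on $\mathcal{T}\neq\varnothing$ are harmless additions; nothing is missing.
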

\begin{proof}
	Suppose that $\psi(\mathcal{S}_p)=t+x, \ x\geq 2$. By Lemma \ref{lemmacounting1}, we know, for  $j\in \{0,1,2,\dots, x\}$, that the number of $k$-spaces of ${\mathcal{S}_p}$  through a  $(t+x-j)$-space is at most $ \qbin{n-t-x}{k-t-x}(\theta_{k-t})^j$. 
	
	Consider now an element $T\in \mathcal{T}$. Then every element of $\mathcal{S}_p$ meets $T$ in at least a $t$-space. Since there are $ \qbin{t+x+1}{t+1}$ projective $t$-spaces in $T$ and since every $t$-space is contained in at most $(\theta_{k-t})^{x} \qbin{n-t-x}{k-t-x}$ elements of ${\mathcal{S}_p}$, we find that  $\mathcal{S}_p$ has at most $(\theta_{k-t})^{x} \qbin{n-t-x}{k-t-x}\qbin{t+x+1}{t+1}$ elements.
	\end{proof}

\begin{lemma}\label{lemma1ellis}
	Let $\mathcal{S}_p$ be a maximal set of $k$-spaces, pairwise intersecting in at least a $t$-space in $\PG(n,q)$, {$n> 2k-t, k>t$}. If $\psi(\mathcal{S}_p)=t+1$ and $|\mathcal{T}|\leq 2$, then \[|\mathcal{S}_p|\leq 2\qbin{n-t-1}{k-t-1}+ (\theta_{t+1}\theta_{k-t}-\theta_{t+1}-1) \theta_{k-t} \qbin{n-t-2}{k-t-2}.\]
\end{lemma}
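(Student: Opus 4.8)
The plan is to exploit the two sources of control we are handed: the at most two $(t+1)$-dimensional transversals, and the fact that every \emph{other} $(t+1)$-space sitting inside an element of $\mathcal{S}_p$ must be non-transversal. Write $\mathcal{T}=\{T_1\}$ or $\mathcal{T}=\{T_1,T_2\}$; by Lemma \ref{remarks}(3) the $T_i$ are $t$-intersecting, so when there are two of them $T_1\cap T_2$ is a $t$-space. First I would record, via Lemma \ref{remarks}(2), that all $k$-spaces through $T_1$ and all $k$-spaces through $T_2$ lie in $\mathcal{S}_p$, that there are $\qbin{n-t-1}{k-t-1}$ of each, and hence that the elements through $T_1$ or $T_2$ number at most the sum $2\qbin{n-t-1}{k-t-1}$. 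This crude bound is the first term of the claim, and one checks using $n>2k-t$ that the case $|\mathcal{T}|=1$ only improves the overall estimate.

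It then remains to bound the set $C$ of elements of $\mathcal{S}_p$ containing neither $T_1$ nor $T_2$. Fix such an $\alpha$. Since $T_1$ is a transversal and $\alpha\not\supseteq T_1$, the trace $\tau:=\alpha\cap T_1$ is a $t$-space of $T_1$, and there are $\theta_{t+1}$ possibilities for $\tau$. As $\psi(\mathcal{S}_p)=t+1>t$, the $t$-space $\tau$ is \emph{not} a transversal, so there is $\pi_\tau\in\mathcal{S}_p$ with $\dim(\pi_\tau\cap\tau)\le t-1$; in the generic case $\dim(\pi_\tau\cap\tau)=t-1$ I fix a $(k-t)$-space $\rho_\tau\subseteq\pi_\tau$ disjoint from $\tau$, and a Grassmann count (as in Lemmas \ref{lemmadisjunct} and \ref{lemmacounting1}) shows every element of $\mathcal{S}_p$ through $\tau$ meets $\rho_\tau$. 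Hence $\alpha$ contains a point $P\in\rho_\tau$ and therefore the $(t+1)$-space $W:=\langle\tau,P\rangle$. The crucial observation is that $W$ is non-transversal: since $W\subseteq\alpha$ and $\alpha$ contains neither $T_1$ nor $T_2$, we have $W\neq T_1,T_2$, and because $|\mathcal{T}|\le 2$ these are the only $(t+1)$-dimensional transversals, so $W\notin\mathcal{T}$. This is exactly where the hypothesis $|\mathcal{T}|\le 2$ enters.

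Next I would bound, for each such $W$, the number of elements of $\mathcal{S}_p$ passing through $W$. Non-transversality of $W$ yields $\pi_W\in\mathcal{S}_p$ with $\dim(W\cap\pi_W)\le t-1$; repeating the disjointness/Grassmann argument forces every element through $W$ to contain a $(t+2)$-space $\langle W,P'\rangle$ with $P'$ ranging over a $(k-t)$-space of $\pi_W$, whence at most $\theta_{k-t}\qbin{n-t-2}{k-t-2}$ elements pass through $W$. The degenerate cases $\dim(W\cap\pi_W)\le t-2$, and likewise $\dim(\pi_\tau\cap\tau)\le t-2$ above, are absorbed by the same Gaussian-coefficient estimate as in the proof of Lemma \ref{lemmacounting1}, using $n>2k-t$; I would verify that these only decrease the bound. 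Thus $|C|\le |\mathcal{W}|\cdot\theta_{k-t}\qbin{n-t-2}{k-t-2}$, where $\mathcal{W}$ is the family of $(t+1)$-spaces $W$ arising in this way.

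Finally I would count $\mathcal{W}$. Each $W\in\mathcal{W}$ meets $T_1$ in the $t$-space $\tau=W\cap T_1$, so there are no coincidences across distinct $\tau$; for fixed $\tau$ it has the form $\langle\tau,P\rangle$ with $P\in\rho_\tau$, giving a priori $|\mathcal{W}|\le\theta_{t+1}\theta_{k-t}$. From this total I discard the spans that are transversal and hence cannot occur for $\alpha\in C$: the point $\rho_\tau\cap T_1$ yields $W=T_1$, one such pair for each of the $\theta_{t+1}$ choices of $\tau$, and one further pair yields $W=T_2$, removing $\theta_{t+1}+1$ spaces and producing $|\mathcal{W}|\le\theta_{t+1}\theta_{k-t}-\theta_{t+1}-1$. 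Combining the three bounds gives the stated inequality. I expect the delicate part to be precisely this last bookkeeping: choosing the witnesses $\pi_\tau$ and complements $\rho_\tau$ consistently so that the transversal spans $T_1$ and $T_2$ are subtracted exactly once each, and confirming that the per-$W$ estimate survives the low-dimensional intersection cases. Everything else reduces to Grassmann's identity and the counting inputs of Lemmas \ref{lemmadisjunct} and \ref{lemmacounting1}.
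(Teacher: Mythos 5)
Your proposal is correct and takes essentially the same route as the paper: the paper likewise puts the $\qbin{n-t-1}{k-t-1}$ $k$-spaces through each transversal aside, attaches to every remaining element a non-transversal $(t+1)$-space extending its $t$-dimensional trace on $T_1$ via a witness with $(t-1)$-dimensional intersection, bounds the elements through each such $(t+1)$-space by $\theta_{k-t}\qbin{n-t-2}{k-t-2}$, and performs exactly your bookkeeping $(\theta_{t+1}-1)(\theta_{k-t}-1)+(\theta_{k-t}-2)=\theta_{t+1}\theta_{k-t}-\theta_{t+1}-1$ by excluding $T_1$ once per trace and $T_2$ once at the trace $T_1\cap T_2$. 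The only cosmetic difference is that the paper parametrises the $(t+1)$-spaces as those through a fixed $t$-space $E\subset T$ meeting a fixed witness $\alpha\setminus E$, whereas you use points of a complement $\rho_\tau$; both give the same count, and your reduction of the case $|\mathcal{T}|=1$ to the stated bound is the paper's appeal to Lemma \ref{Lemma 47}.
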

\begin{proof}
	Let $T$ be a $(t+1)$-space  of $\mathcal{T}$. Since $\mathcal{S}_p$ is a maximal set, we know that all $\qbin{n-t-1}{k-t-1}$ $k$-spaces through $T$ are contained in $\mathcal{S}_p$. Now we count the size of the set $\mathcal{S}_{p0}$ of $k$-spaces of $\mathcal{S}_p$ not through $T$. For every $\pi\in \mathcal{S}_{p0}$, $\dim(\pi\cap T)=t$. Let $E$ be a $t$-space in $T$, then there exists an element $\alpha\in \mathcal{S}_{p0}$ not through $E$, and so $\dim(\alpha \cap E)=t-1$. Hence, every element $\pi$ of $\mathcal{S}_{p0}$, through $E$ must contain a $(t+1)$-space $\tau$, different from  $T$, such that $E\subset \tau$ and $\tau\cap (\alpha \setminus E) \neq \emptyset$. Note that there are $\theta_{k-t}-1$ possibilities for $\tau$. Fix such a $(t+1)$-space $\tau$. 
	\begin{itemize}
		\item If $\mathcal{T}=\{T\}$, we know that $\tau \notin \mathcal{T}$, and hence there exists an element $\sigma$ of $\mathcal{S}_p$, meeting $\tau$ in at most a $(t-1)$-space. Hence, every element of $\mathcal{S}_{p0}$ through $\tau$ meets $\sigma \setminus \tau$, and so the number of elements of $\mathcal{S}_{p0}$ through $\tau$ is at most $\theta_{k-t} \qbin{n-t-2}{k-t-2}$. 
		Since there are $\theta_{t+1}$ possibilities for $E$, and at most $\theta_{k-t} -1$ for $\tau$, we have that \[|\mathcal{S}_p|\leq \qbin{n-t-1}{k-t-1}+\theta_{t+1} (\theta_{k-t}-1)\theta_{k-t} \qbin{n-t-2}{k-t-2}.\]
		\item Suppose $|\mathcal{T}|=2$, and let $\mathcal{T}=\{T, \Psi\}$. If $\tau=\Psi$, then $\mathcal{S}_p$ contains all $\qbin{n-t-1}{k-t-1}$ $k$-spaces through $\tau$. If $\tau\neq \Psi$, then we can follow the argument in the previous item, and we find that the number of elements of $\mathcal{S}_{p0}$ through $\tau$ is at most $\theta_{k-t} \qbin{n-t-2}{k-t-2}$. 
		Note that there are $\theta_{t+1}-1$ possibilities for $E\neq T\cap \Psi$. If $E\neq T\cap \Psi$, there are at most $\theta_{k-t} -1$ possibilities for $\tau\neq \Psi, T$, through $E$.  Next to this, if $E= T\cap \Psi$, there are at most $\theta_{k-t} -2$ possibilities for $\tau\neq \Psi$ through $E= T\cap \Psi$. Hence, we have that \begin{align*}
			|\mathcal{S}_p|\leq& \qbin{n-t-1}{k-t-1}+ \sum_{E\subset T}\sum_{\tau \supseteq E} |\{\pi \in \mathcal{S}_{p0}|\tau \subset \pi\}| \\
			\leq & \qbin{n-t-1}{k-t-1}+ \sum_{E\neq T\cap \Psi }\sum_{\tau \supseteq E} \theta_{k-t} \qbin{n-t-2}{k-t-2}+\sum_{\tau \supset T\cap \Psi} |\{\pi \in \mathcal{S}_{p0}|\tau \subset \pi\}|\\
			\leq & \qbin{n-t-1}{k-t-1}+ (\theta_{t+1}-1)(\theta_{k-t}-1) \theta_{k-t} \qbin{n-t-2}{k-t-2}+\sum_{\tau \neq  \Psi} \theta_{k-t}\qbin{n-t-2}{k-t-2}+\qbin{n-t-1}{k-t-1}\\
			\leq &  2\qbin{n-t-1}{k-t-1}+ (\theta_{t+1}-1)(\theta_{k-t}-1) \theta_{k-t} \qbin{n-t-2}{k-t-2}+(\theta_{k-t}-2) \theta_{k-t}\qbin{n-t-2}{k-t-2}\\
		= &  2\qbin{n-t-1}{k-t-1}+ (\theta_{t+1}\theta_{k-t}-\theta_{t+1}-1) \theta_{k-t} \qbin{n-t-2}{k-t-2}.\\
		\end{align*}
	\end{itemize}
The lemma follows since \[2\qbin{n-t-1}{k-t-1}+ (\theta_{t+1}\theta_{k-t}-\theta_{t+1}-1) \theta_{k-t} \qbin{n-t-2}{k-t-2}\geq \qbin{n-t-1}{k-t-1}+\theta_{t+1} (\theta_{k-t}-1)\theta_{k-t} \qbin{n-t-2}{k-t-2}\] for $n>2k-t, k>t, q\geq 2$ (see Lemma \ref{Lemma 47}).
\end{proof}

From now on, we define $f_p(q,n,k,t)$ as the maximum of the number of elements in the sets described in Example \ref{examplep} and Example \ref{example2p}.
\begin{align*}
f_p(q,n,k,t)=&\max\{\theta_{k+1}-\theta_{k-t}+\qbin{n-t}{k-t}- q^{(k-t+1)(k-t)}\qbin{n-k-1}{k-t} ,\\ &\qquad \quad \theta_{t+2}\cdot\left(\qbin{n-t-1}{k-t-1}-\qbin{n-t-2}{k-t-2}\right)+\qbin{n-t-2}{k-t-2} \}.\\
\end{align*}

From Lemma \ref{appendixprojverschil1}, \ref{appendixprojverschil2} and \ref{appendixprojverschil3}, we find, for $n>2k-t, k>t+1, q\geq 3$, that 

\begin{align*}
	f_p(q,n,k,t)=&\begin{cases}
	\theta_{k+1}-\theta_{k-t}+\qbin{n-t}{k-t}- q^{(k-t+1)(k-t)}\qbin{n-k-1}{k-t} &\text{if }{ k>2t+2} \\
	\theta_{t+2}\cdot\left(\qbin{n-t-1}{k-t-1}-\qbin{n-t-2}{k-t-2}\right)+\qbin{n-t-2}{k-t-2} &\text{if } k\leq2t+2.
	\end{cases}	\\
\end{align*}

\begin{theorem}
	Let $\mathcal{S}_p$ be a maximal set of $k$-spaces, pairwise intersecting in at least a $t$-space in $\PG(n,q)$, {$ k>t+1$, $t>0$,} with $q\geq 4$, and $n> 2k+t+2,$ (or $q=3$ and $n> 2k+t+3$). If $\mathcal{S}_p$ is not a $t$-pencil, then 
	\begin{align*}
	|\mathcal{S}_p|\leq f_p(q,n,k,t).
	\end{align*}
	Equality occurs if and only if $\mathcal{S}_p$ is Example \ref{examplep} for $k>2t+2$ or Example \ref{example2p} for {$k\leq 2t+2$}.
\end{theorem}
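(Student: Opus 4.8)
The plan is to organise the whole argument around the invariant $\psi(\mathcal{S}_p)$. By Lemma~\ref{remarks}(1) we have $t\le \psi(\mathcal{S}_p)\le k$, with equality on the left exactly when $\mathcal{S}_p$ is a $t$-pencil; since $\mathcal{S}_p$ is assumed not to be a $t$-pencil, I would write $\psi(\mathcal{S}_p)=t+x$ with $x\ge 1$ and split into the two regimes $x\ge 2$ and $x=1$. In each regime the goal is the same: either produce an upper bound for $|\mathcal{S}_p|$ that is \emph{strictly} smaller than $f_p(q,n,k,t)$ (so that this configuration cannot be extremal), or else pin down $\mathcal{S}_p$ exactly as one of the two examples.

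For $x\ge 2$ I would invoke Lemma~\ref{lemmacountingproj}, which gives $|\mathcal{S}_p|\le (\theta_{k-t})^{x}\qbin{n-t-x}{k-t-x}\qbin{t+x+1}{t+1}$, and compare this to $f_p$. A leading-order estimate in $q$ (writing $m=k-t$ and using $\qbin{a}{b}\sim q^{b(a-b)}$) shows that this bound has $q$-degree $m(n-k)+x(2k+1-n)$, while the size of Example~\ref{examplep} has $q$-degree $m(n-k)+m-(n-k)$; their difference is $n-2k-t-2$ at $x=2$ and increases with $x$. This is precisely the point where the hypothesis $n>2k+t+2$ enters: it guarantees a strictly positive degree gap, and the finer conditions $q\ge 4$ (or $q=3$ with $n>2k+t+3$) are what is needed to absorb the leading constants and turn the degree comparison into a genuine strict inequality. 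An analogous, and more comfortable, estimate handles the comparison with Example~\ref{example2p} in the range $k\le 2t+2$.

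The substantive regime is $x=1$, i.e. $\psi(\mathcal{S}_p)=t+1$. Here I would examine the family $\mathcal{T}$ of all $(t+1)$-spaces meeting every element of $\mathcal{S}_p$ in at least a $t$-space; by Lemma~\ref{remarks}(3) any two of its members meet in exactly a $t$-space. If $|\mathcal{T}|\le 2$, Lemma~\ref{lemma1ellis} bounds $|\mathcal{S}_p|$, and since that bound has $q$-degree only $(m-1)(n-k)$ while both example sizes carry an extra positive-degree factor ($q^{m}$, respectively $q^{t+2}$), it is comfortably below $f_p$, so no extremal set arises here. If $|\mathcal{T}|\ge 3$, I would apply the Brouwer--Cohen--Neumaier classification (Theorem~\ref{basisEKRthm} with $k$ replaced by $t+1$) to the $t$-intersecting family $\mathcal{T}$: either all its members share a common $t$-space $\delta$, or they all lie in a common $(t+2)$-space $\Gamma$.

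The two outcomes correspond to the two examples. In the pencil case I would set $\delta$ to be the common $t$-space and, using Lemma~\ref{remarks}(2) (all $k$-spaces through any member of $\mathcal{T}$ lie in $\mathcal{S}_p$) together with maximality, identify the $(k+1)$-space $\langle\pi,\delta\rangle$ and argue that every element of $\mathcal{S}_p$ is either a $k$-space of $\langle\pi,\delta\rangle$ or a $k$-space through $\delta$ meeting $\langle\pi,\delta\rangle$ in at least a $(t+1)$-space, that is, $\mathcal{S}_p$ is exactly Example~\ref{examplep}. In the spanning case I would show symmetrically that every element of $\mathcal{S}_p$ meets $\Gamma$ in at least a $(t+1)$-space, so that $\mathcal{S}_p$ is Example~\ref{example2p}. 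Finally, combining all cases and using the comparison of the two example sizes recorded in the definition of $f_p$ (Lemmas~\ref{appendixprojverschil1}, \ref{appendixprojverschil2} and \ref{appendixprojverschil3}) gives $|\mathcal{S}_p|\le f_p(q,n,k,t)$, with equality exactly for Example~\ref{examplep} when $k>2t+2$ and for Example~\ref{example2p} when $k\le 2t+2$. I expect the genuine obstacle to be the reconstruction in the pencil case: converting the structural information ``$\mathcal{T}$ is a pencil through $\delta$'' into the rigid conclusion that $\mathcal{S}_p$ is \emph{precisely} Example~\ref{examplep} requires a careful use of maximality and of $n>2k-t$ to exclude any stray $k$-space, whereas the degree estimates above, though tedious, are essentially mechanical once the leading behaviour is identified.
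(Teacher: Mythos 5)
Your overall architecture coincides with the paper's: bound $\psi(\mathcal{S}_p)\ge t+2$ away via Lemma~\ref{lemmacountingproj} (your degree computation, giving the gap $n-2k-t-2$ at $x=2$, is exactly what Lemma~\ref{lemmaappendixlelijk} formalises), dispose of $|\mathcal{T}|\le 2$ via Lemma~\ref{lemma1ellis}, and for $|\mathcal{T}|\ge 3$ apply Theorem~\ref{basisEKRthm} to $\mathcal{T}$ to split into the $(t+2)$-space case (Example~\ref{example2p}) and the $t$-pencil case (Example~\ref{examplep}). The spanning case is handled exactly as you describe. So up to that point this is the paper's proof.

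The genuine gap is in the step you yourself flag as the obstacle, and your proposed remedy for it is not sufficient. You claim that in the pencil case the conclusion ``every element of $\mathcal{S}_p$ not through $\delta$ lies in the single $(k+1)$-space $\langle\pi,\delta\rangle$'' can be extracted from maximality together with $n>2k-t$. It cannot: there is nothing structurally impossible about a maximal family in which two elements $\pi,\pi_2$ with $\delta\nsubseteq\pi_i$ span different $(k+1)$-spaces with $\delta$; such families are merely \emph{small}. The paper excludes them by a further counting contradiction against the standing assumption $|\mathcal{S}_p|\ge f_p(q,n,k,t)$: if such a $\pi_2$ exists, every element of $\mathcal{S}_p$ through $\delta$ must meet both $\pi\setminus\delta$ and $\pi_2\setminus\delta$, which caps the number of elements through $\delta$ by $\theta_{k-t}^{2}\qbin{n-t-2}{k-t-2}+\theta_{k-t-1}\qbin{n-t-1}{k-t-1}$, while the elements not through $\delta$ are controlled via the span $Z$ of $\mathcal{T}$ (one first shows $\dim Z\le k+1$ and that such elements meet $Z$ in a hyperplane of $Z$, contributing at most $\theta_{t+x}\qbin{n-t-x+1}{k-t-x+1}$). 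The resulting total is below $f_p$ by Lemma~\ref{appendixlaatste}, and that lemma genuinely needs $n>2k+t+2$ and $k>2t+2$, not just $n>2k-t$. Without this quantitative step your plan stalls at exactly the point you predicted would be hardest; everything else in your sketch is sound.
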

\begin{proof}
	Let $\mathcal{S}_p$ be a maximal set of $k$-spaces, pairwise intersecting in at least a $t$-space, in $\PG(n,q)$, with $\mathcal{S}_p$ not a $t$-pencil, and suppose that $|\mathcal{S}_p|\geq f_p(q,n,k,t)$. From Lemma \ref{lemmacountingproj} and Lemma \ref{lemmaappendixlelijk}, it follows that $\psi(\mathcal{S}_p)<t+2$ for  {$ k>t+1, t>0$,} and $q\geq 4, n>2k+t+2$, (or $q=3$ and $n> 2k+t+3$). Since $\mathcal{S}_p$ is not a $t$-pencil, $\psi(\mathcal{S}_p)>t$, and so $\psi(\mathcal{S}_p)=t+1$.
From Lemma \ref{lemma1ellis}, it follows that if $|{\mathcal{T}}|\leq 2$,
 then $|{\mathcal{S}_p}|\leq 2\qbin{n-t-1}{k-t-1}+ (\theta_{t+1}\theta_{k-t}-\theta_{t+1}-1) \theta_{k-t} \qbin{n-t-2}{k-t-2}$,
 a contradiction for $n>2k+t+2, q\geq 3, k>t+1, t>0$, due to Lemma \ref{appendixgeennaammeer}. Hence, $|{\mathcal{T}}|>2$. 
	From Lemma \ref{remarks}$(3)$, it follows that $\mathcal{T}$ is a $t$-intersecting set of $(t+1)$-spaces. Hence, 
	$\mathcal{T}$ is contained in a $t$-pencil or all elements of $\mathcal{T}$ are contained in a $(t+2)$-space (see Theorem \ref{basisEKRthm}).
	
	We first suppose that there is no $t$-space contained in all elements of  $\mathcal{T}$. Hence, we know that all elements of $\mathcal{T}$ are contained in a $(t+2)$-space $\Gamma$. 
This implies that every element $\pi_1$ of $\mathcal{S}_p$ must meet $\Gamma$ in at least a $(t+1)$-space.
	Since $\mathcal{S}_p$ is maximal, we know that $\mathcal{S}_p$ contains all $k$-spaces meeting $\Gamma$ in at least a $(t+1)$-space, which is Example \ref{example2p}. Hence,  $|\mathcal{S}_p|= \theta_{t+2}\cdot\left(\qbin{n-t-1}{k-t-1}-\qbin{n-t-2}{k-t-2}\right)+\qbin{n-t-2}{k-t-2}$, if there is no $t$-space contained in all elements of $\mathcal{T}$.  This number is larger than $\theta_{k+1}-\theta_{k-t}+\qbin{n-t}{k-t}- q^{(k-t-1)(k-t)}\qbin{n-k-1}{k-t}$, if and only if $k\leq 2t+2$. So, for $k>2t+2$, we find a contradiction. 
	
	Hence, for $k>2t+2$, we know that the elements of $\mathcal{T}$ are contained in a $t$-pencil with vertex the $t$-space $\delta$. 
	Let $Z$ be the span of all elements of $\mathcal{T}$ and let $\dim(Z)=t+x$, $x\geq 2$. Since $\mathcal{S}_p$ is not a $t$-pencil, we know that there are $k$-spaces in $\mathcal{S}_p$ that do not contain $\delta$. Note that these elements of $\mathcal{S}_p$, not through $\delta$, meet $\delta$ in a $(t-1)$-space, and meet $Z$ in a $(t+x-1)$-space. 
	The dimension of the span $Z$ of all the $(t+1)$-spaces in $\mathcal{T}$ is at most $k+1$:  if $\dim(Z)>k+1$ then every $k$-space of $\mathcal{S}_p$, not through $\delta$ would meet $Z$ in a subspace with dimension $\dim(Z)-1>k$, a contradiction.
	
	Let $\pi\in \mathcal{S}_p$ be an element that does not contain $\delta$. Note that every element of $\mathcal{S}_p$ through $\delta$ has at least a $(t+1)$-space in common with $\langle \pi, \delta \rangle$.
	Now we claim that all elements of $\mathcal{S}_p$, not through $\delta$, are contained in $\langle \pi, \delta \rangle$.
	Suppose that this is not the case, then there exists an element $\pi_2\in \mathcal{S}_p$ with $\delta \nsubseteq \pi_2$ and $\pi_2 \nsubseteq\langle \pi,\delta \rangle$. Then every element of $\mathcal{S}_p$ through $\delta$ meets both $\pi\setminus \delta$ and $\pi_2\setminus \delta$. Hence, the number of elements of $\mathcal{S}_p$, through $\delta$, is at most $\theta_{k-t}^{2}\qbin{n-t-2}{k-t-2}+\theta_{k-t-1}\qbin{n-t-1}{k-t-1}$. Here, the first term is an upper bound on the number of elements meeting both $\pi \setminus \pi_2$ and $\pi_2 \setminus \pi$. The second term is an upper bound on the number of elements meeting $(\pi\cap \pi_2) \setminus \delta$. Since every element of $\mathcal{S}_p$ not through $\delta$ meets $Z$ in a $(t+x-1)$-space, we find that $|\mathcal{S}_p|\leq \theta_{t+x}\qbin{n-t-x+1}{k-t-x+1}+\theta_{k-t}^{2}\qbin{n-t-2}{k-t-2}+\theta_{k-t-1}\qbin{n-t-1}{k-t-1}$. For $2\leq x\leq k-t+1, k>2t+2, n>2k+t+2, t>0$ and $q\geq 3$; this gives a contradiction by Lemma \ref{appendixlaatste}, since $|S|\geq f_p(q,n,k,t)$. Hence, we find that $\mathcal{S}_p$ is contained in Example \ref{examplep}. The theorem follows from the maximality of $\mathcal{S}_p$.
	\end{proof}

\subsection{Classification result in $\AG(n,q)$}\label{sectionaffien}

In this subsection, we investigate the largest non-trivial sets of $k$-spaces in $\AG(n,q)$ pairwise intersecting in at least a $t$-space. Many results and proofs in this affine setting are similar to the results and proofs in the projective setting, but because of some structural differences, we decided to discuss the Hilton-Milner problem, in the projective and affine context,  in different subsections.

We again suppose that $k>t+1$. Let $\mathcal{S}_a$
be a maximal set of $k$-spaces in $\AG(n,q)$, $n>2k-t$, pairwise meeting in at least a $t$-space. Let 
\begin{align*}
\psi(\mathcal{S}_a)=\min\{\dim(T)| T\subset \AG(n,q), \dim(T\cap \alpha)\geq t, \ \forall \alpha \in \mathcal{S}_a \}. \\
\end{align*}
Let  $\mathcal{T}$ be the set of all $\psi(\mathcal{S}_a)$-dimensional spaces in $\AG(n,q)$ that meet every element of $\mathcal{S}_a$ in at least a $t$-space. 
\begin{lemma}\label{remarksaffien}
	\begin{enumerate}
		\item $t\leq\psi(\mathcal{S}_a)\leq k$, and if $\psi(\mathcal{S}_a)=t$, then $\mathcal{S}_a$ is a $t$-pencil. 
		\item Let $T\in \mathcal{T}$, then all $k$-spaces through $T$ are contained in $\mathcal{S}_a$.
		\item The elements of $\mathcal{T}$ are $t$-intersecting in $\AG(n,q)$.
	\end{enumerate}
\end{lemma}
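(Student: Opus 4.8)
The plan is to follow the proof of the projective analogue, Lemma~\ref{remarks}, adapting each of the three steps to the affine setting; the only genuinely new difficulty will appear in the third item, where the hyperplane at infinity $H_\infty$ must be tracked. For the first item, I fix any $\pi_1 \in \mathcal{S}_a$: every element of $\mathcal{S}_a$ meets $\pi_1$ in at least an affine $t$-space, so $\pi_1$ is a $k$-dimensional space witnessing the condition defining $\psi$, whence $\psi(\mathcal{S}_a) \leq k$; and taking any $T \in \mathcal{T}$, the inequality $\dim(T \cap \alpha) \geq t$ cannot exceed $\dim T$, so $\psi(\mathcal{S}_a) = \dim T \geq t$. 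If $\psi(\mathcal{S}_a) = t$, such a $T$ is an affine $t$-space with $\dim(T \cap \alpha) \geq t = \dim T$ for all $\alpha$, forcing $T \subseteq \alpha$ for every $\alpha \in \mathcal{S}_a$; then $\mathcal{S}_a$ lies in the $t$-pencil with vertex $T$, and maximality gives equality, so $\mathcal{S}_a$ is a $t$-pencil. The second item is immediate from maximality: for $T \in \mathcal{T}$ and any affine $k$-space $\rho \supseteq T$, each $\alpha \in \mathcal{S}_a$ satisfies $\rho \cap \alpha \supseteq T \cap \alpha$ of affine dimension at least $t$, so $\rho$ is $t$-intersecting with all of $\mathcal{S}_a$ and maximality places $\rho \in \mathcal{S}_a$.

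For the third item I would argue by contradiction, exactly as in Lemma~\ref{remarks}$(3)$: supposing $T_1, T_2 \in \mathcal{T}$ do not meet in an affine $t$-space, I would extend $T_i$ to an affine $k$-space $\pi_i$ so that $\pi_1$ and $\pi_2$ also fail to meet in an affine $t$-space; item~$(2)$ then forces $\pi_1, \pi_2 \in \mathcal{S}_a$, contradicting that $\mathcal{S}_a$ is $t$-intersecting. I expect the construction of $\pi_1, \pi_2$ to be the main obstacle, and I would carry it out by passing to the projective extensions and writing $w = \dim(\tilde{T}_1 \cap \tilde{T}_2)$. If $w \leq t-1$, I would grow $\tilde{T}_1, \tilde{T}_2$ generically into $k$-spaces whose projective intersection has dimension exactly $t-1$; this requires an ambient span of dimension $2k-t+1$, which fits since $n > 2k-t$, and whether this $(t-1)$-space lies in $H_\infty$ or not, the resulting affine intersection has dimension at most $t-1$. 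If instead $w \geq t$, then the affine intersection of $T_1$ and $T_2$ being smaller than a $t$-space forces $\tilde{T}_1 \cap \tilde{T}_2 \subseteq H_\infty$; here I would grow the two spaces keeping their projective intersection equal to $\tilde{T}_1 \cap \tilde{T}_2$, which costs only a span of dimension $2k-w \leq 2k-t < n$ and leaves the affine intersection empty. In both cases the extensions are automatically affine, since they contain $\tilde{T}_i \not\subseteq H_\infty$, and the delicate point is just to verify that such generic extensions exist inside $\PG(n,q)$, which is exactly where the hypothesis $n > 2k-t$ is used.
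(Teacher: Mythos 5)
Your proposal is correct and follows exactly the route the paper intends: the published proof of Lemma~\ref{remarksaffien} is literally ``analogous to the proof of Lemma~\ref{remarks}'', and you carry out that analogy item by item, with the witness/maximality arguments for (1) and (2) and the extension-to-$k$-spaces contradiction for (3) using $n>2k-t$. Your case split on $w=\dim(\tilde{T}_1\cap\tilde{T}_2)$ and the observation that a projective intersection of dimension at least $t$ with small affine intersection must lie in $H_\infty$ supply precisely the affine details the paper leaves implicit.
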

\begin{proof}Analogous to the proof of Lemma \ref{remarks}.
\end{proof}

\begin{lemma}\label{lemmacounting1affien}{
		Let $\psi(\mathcal{S}_a)=t+x,\  x\geq2$, $k>t+1$ and $n> 2k-t$. Then the number of elements of $\mathcal{S}_a$ through an affine $(t+x-j)$-space, with $j\in \{0,1,2,\dots, x\}$, is at most
		$(\theta_{k-t})^j \qbin{n-t-x}{k-t-x}.$ }
\end{lemma}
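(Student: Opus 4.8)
The plan is to mirror the proof of the projective analogue, Lemma \ref{lemmacounting1}, almost verbatim, carrying out all the incidence arguments in the projective extension $\PG(n,q)$ while keeping the counted objects affine. I would induct on $j\in\{0,1,\dots,x\}$, proving that the number of elements of $\mathcal{S}_a$ through a fixed affine $(t+x-j)$-space is at most $(\theta_{k-t})^j\qbin{n-t-x}{k-t-x}$. The base case $j=0$ is immediate from Lemma \ref{remarksaffien}$(2)$: an affine $(t+x)$-space $T\in\mathcal{T}$ lies in every $k$-space of $\mathcal{S}_a$ through it, and passing to the quotient $\AG(n-t-x,q)$ shows the number of affine $k$-spaces through $T$ is exactly $\qbin{n-t-x}{k-t-x}$.

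For the inductive step, fix an affine $(t+x-j)$-space $\gamma_j$ with $j\geq 1$. Since $\dim(\gamma_j)<t+x=\psi(\mathcal{S}_a)$, the definition of $\psi$ forces the existence of an element $\pi_j\in\mathcal{S}_a$ with $\dim(\gamma_j\cap\pi_j)\leq t-1$, and I split into the same two cases as in Lemma \ref{lemmacounting1}. If $\dim(\gamma_j\cap\pi_j)=t-1$, choose an affine $(k-t)$-space $\pi_{j\gamma}\subseteq\pi_j$ complementary to $\gamma_j\cap\pi_j$ in $\pi_j$. For any $\beta\in\mathcal{S}_a$ through $\gamma_j$, the $t$-intersecting property gives $\dim(\tilde\beta\cap\tilde\pi_j)\geq t$, so in the projective $k$-space $\tilde\pi_j$ the Grassmann relation $t+(k-t)\geq k$ guarantees that $\tilde\beta\cap\tilde\pi_j$ meets $\tilde\pi_{j\gamma}$ in at least a projective point $P$. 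Then $\langle\gamma_j,P\rangle$ is an affine $(t+x-j+1)$-space contained in $\beta$, and ranging $P$ over the $\theta_{k-t}$ projective points of $\tilde\pi_{j\gamma}$ yields at most $\theta_{k-t}$ such affine $(t+x-(j-1))$-spaces, each carrying at most $(\theta_{k-t})^{j-1}\qbin{n-t-x}{k-t-x}$ elements by the induction hypothesis. This gives the desired bound $(\theta_{k-t})^{j}\qbin{n-t-x}{k-t-x}$.

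In the second case, where every element of $\mathcal{S}_a$ meets $\gamma_j$ in a $t$-space or in at most a $(t-2)$-space, I set $t-l=\max\{\dim(\gamma_j\cap\pi):\pi\in\mathcal{S}_a,\ \dim(\gamma_j\cap\pi)<t\}$ with $l\geq 2$, pick $\pi_j$ attaining it, and take an affine $(k-t+l-1)$-space $\pi_{j\gamma}\subseteq\pi_j$ complementary to $\gamma_j\cap\pi_j$. Now the projective Grassmann count gives that $\tilde\beta\cap\tilde\pi_j$ meets $\tilde\pi_{j\gamma}$ in at least an $(l-1)$-space, whose affine span with $\gamma_j$ is an affine $(t+x-(j-l))$-space inside $\beta$; summing over the $\qbin{k-t+l}{l}$ such $(l-1)$-spaces of $\tilde\pi_{j\gamma}$ and applying the induction hypothesis produces the bound $\qbin{k-t+l}{l}(\theta_{k-t})^{j-l}\qbin{n-t-x}{k-t-x}$, which is at most $(\theta_{k-t})^j\qbin{n-t-x}{k-t-x}$ by the same elementary inequality $\qbin{k-t+l}{l}(\theta_{k-t})^{j-l}\leq(\theta_{k-t})^j$ already verified in Lemma \ref{lemmacounting1}.

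The main obstacle is not the combinatorial skeleton, which transfers directly, but the careful affine-to-projective bookkeeping: one must check that the complements $\pi_{j\gamma}$ can be chosen affinely, that spanning $\gamma_j$ with a projective point or an $(l-1)$-space of $\tilde\pi_{j\gamma}$ (possibly lying at infinity) always produces an honest affine subspace of the correct dimension, and that using the projective point count $\theta_{k-t}$ rather than the affine count $q^{k-t}$ keeps the bound valid (it only weakens it). Because the intersection conditions are $t$-intersecting in the affine sense, one should confirm that $\dim(\tilde\beta\cap\tilde\pi_j)\geq t$ indeed follows, so that the projective Grassmann argument applies without losing the affine nature of the counted $k$-spaces.
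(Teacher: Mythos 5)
Your proposal is correct and follows essentially the same route as the paper: induction on $j$ with base case from Lemma \ref{remarksaffien}$(2)$, the same two-case split according to whether some element of $\mathcal{S}_a$ meets $\gamma_j$ in exactly a $(t-1)$-space or only in dimension at most $t-2$, the same counting via points (resp.\ $(l-1)$-spaces) of a projective complement $\pi_{j\gamma}$ inside the extended $k$-space $\pi_j$, and the same inequality $\qbin{k-t+l}{l}(\theta_{k-t})^{j-l}\leq(\theta_{k-t})^j$. The affine-to-projective bookkeeping you flag is handled in the paper exactly as you describe, by working with $\tilde{\mathcal{S}_a}$ and $\tilde{\gamma}_j$ throughout and noting that spanning the affine space $\gamma_j$ with a subspace of $\pi_{j\gamma}$ (possibly at infinity) yields an affine space of the right dimension to which the induction hypothesis applies.
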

\begin{proof}
	Suppose that $\psi(\mathcal{S}_a)=t+x, \ x\geq 2$. 
	We prove, by induction on $j\in \{0,1,2,\dots, x\}$, that the number of $k$-spaces of ${\mathcal{S}_a}$  through an affine  $(t+x-j)$-space is at most $ \qbin{n-t-x}{k-t-x}(\theta_{k-t})^j$. Note that the statement is true for $j=0$, by Lemma \ref{remarksaffien}$(2)$.

	Let  $j\in \{1,2, 3,\dots, x\}$ and suppose now that the number of $k$-spaces of ${\mathcal{S}_a}$ through an affine $(t+x-j_0)$-space, is at most $(\theta_{k-t})^{j_0} \qbin{n-t-x}{k-t-x}$, for all $j_0<j$. Then we prove that this also holds for $j$.
	Consider an affine $(t+x-j)$-space $\gamma_j$. 
	Since $\psi(\mathcal{S}_p)=t+x$, we know that there exists a $k$-space $\pi_j$ of $\tilde{\mathcal{S}_a}$, meeting $\tilde{\gamma_j}$ in at most a $(t-1)$-space. 
	
	Suppose first that $\dim(\pi_j\cap \tilde{\gamma}_j)=t-1$ and let $\pi_{j\gamma}$ be a (projective) $(k-t)$-space in $\pi_j \setminus \tilde{\gamma}_j$. Then every element of $\tilde{\mathcal{S}_a}$ through $\tilde{\gamma}_j$ contains at least a point of $\pi_{j\gamma}$. Since there are $\theta_{k-t}$ points in $\pi_{j\gamma}$, and since the number of affine $k$-spaces in $\mathcal{S}_a$ through a $(t+x-j+1)$-space is at most $(\theta_{k-t})^{j-1}\qbin{n-t-x}{k-t-x}$, we find that the number of elements of $\tilde{\mathcal{S}_a}$ through $\tilde{\gamma}_j$ is at most $ (\theta_{k-t})^{j} \qbin{n-t-x}{k-t-x}.$

	Suppose now that every element $\pi$ of $\tilde{\mathcal{S}_a}$ meets $\tilde{\gamma}_j$ in a $t$-space or in at most a $(t-2)$-space. 
	Let $\max\{\dim(\tilde{\gamma}_j\cap \pi)|\pi \in \tilde{\mathcal{S}_a}, \dim(\tilde{\gamma}_j\cap \pi)< t \}=t-l$, then $l\geq 2$,  and suppose that $\pi_j\in \tilde{\mathcal{S}_a}$ is an element such that $\dim(\pi_j\cap \tilde{\gamma}_j)=t-l$. Let $\pi_{j\gamma}$ be a projective $(k-t+l-1)$-space in $\pi_j \setminus \tilde{\gamma}_j$.  Then every element of $\tilde{\mathcal{S}_a}$
	through $\tilde{\gamma}_j$ contains at least an $(l-1)$-space of $\pi_{j\gamma}$. Since there are $\qbin{k-t+l}{l}$ $(l-1)$-spaces in $\pi_{j\gamma}$, and since the number of affine $k$-spaces in $\mathcal{S}_a$ through a $(t+x-j+l)$-space is at most $(\theta_{k-t})^{j-l}\qbin{n-t-x}{k-t-x}$, we find that the number of elements of $\tilde{\mathcal{S}_a}$ through $\tilde{\gamma}_j$ is at most 
	$ \qbin{k-t+l}{l} (\theta_{k-t})^{j-l}\qbin{n-t-x}{k-t-x}$. Note that 
	\begin{align*}
	\qbin{k-t+l}{l} (\theta_{k-t})^{j-l}= \frac{(q^{k-t+l}-1)\dots (q^{k-t+1}-1)}{(q^l-1)\dots (q-1)} (\theta_{k-t})^{j-l} \leq  \left(\frac{(q^{k-t+1}-1)}{(q-1)}\right)^l (\theta_{k-t})^{j-l}=(\theta_{k-t})^{j}
	\end{align*}
	Hence, also in this case, 
	we find that the number of elements of $\tilde{\mathcal{S}_a}$ trough $\tilde{\gamma}_j$, and so, the number of elements of $\mathcal{S}_a$ through $\gamma_j$ is at most $ (\theta_{k-t})^{j} \qbin{n-t-x}{k-t-x}.$

\end{proof}

\begin{lemma}\label{lemmacountingaffien}
	Let $\mathcal{S}_a$ be a set of $k$-spaces, pairwise intersecting in at least a $t$-space in $\AG(n,q)$. If $\psi(\mathcal{S}_a)=t+x,\  x\geq2$, $k>t+1$ and $n> 2k-t$, then 
	$|\mathcal{S}_a|\leq q^x \qbin{t+x}{x}(\theta_{k-t})^{x} \qbin{n-t-x}{k-t-x}.$ 
\end{lemma}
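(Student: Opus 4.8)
The plan is to imitate the proof of Lemma~\ref{lemmacountingproj}, replacing the projective count of $t$-subspaces of a $(t+x)$-space by the corresponding affine count. Since $\psi(\mathcal{S}_a)=t+x$ is attained (any $k$-space of $\mathcal{S}_a$ meets every element in at least a $t$-space, so the minimum is realised), the family $\mathcal{T}$ is nonempty; fix an affine $(t+x)$-space $T\in\mathcal{T}$. By the definition of $\mathcal{T}$, every $\alpha\in\mathcal{S}_a$ satisfies $\dim(\alpha\cap T)\ge t$, and $\alpha\cap T$ is an affine subspace of $T$ of dimension at least $t$, so $\alpha$ contains at least one affine $t$-subspace of $T$. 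Summing over the affine $t$-subspaces $E$ of $T$ therefore overcounts $\mathcal{S}_a$:
\[
|\mathcal{S}_a|\le \sum_{\substack{E\subseteq T,\ \dim E=t}} |\{\alpha\in\mathcal{S}_a: E\subseteq \alpha\}|.
\]

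For each summand I would invoke Lemma~\ref{lemmacounting1affien} with $j=x$: an affine $t$-space is an affine $(t+x-j)$-space for $j=x$, so the number of elements of $\mathcal{S}_a$ through a fixed affine $t$-space is at most $(\theta_{k-t})^{x}\qbin{n-t-x}{k-t-x}$. It then remains only to count the affine $t$-subspaces of the affine $(t+x)$-space $T$, which is the single genuinely new ingredient compared with the projective case. An affine $t$-subspace of $T$ is determined by a $t$-dimensional direction inside the $(t+x)$-dimensional direction space of $T$, of which there are $\qbin{t+x}{t}=\qbin{t+x}{x}$, together with one of the $q^{(t+x)-t}=q^{x}$ translates of that direction. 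Hence $T$ contains exactly $q^{x}\qbin{t+x}{x}$ affine $t$-spaces, and multiplying the two bounds yields
\[
|\mathcal{S}_a|\le q^{x}\qbin{t+x}{x}(\theta_{k-t})^{x}\qbin{n-t-x}{k-t-x},
\]
which is the asserted inequality.

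Because Lemma~\ref{lemmacounting1affien} already carries the analytic weight, I do not expect a serious obstacle here. The only points that require care are the affine $t$-subspace count $q^{x}\qbin{t+x}{x}$ (which differs from the projective value $\qbin{t+x+1}{t+1}$ precisely by the factor $q^{x}$ of translates and the shift from $\qbin{t+x+1}{t+1}$ to $\qbin{t+x}{x}$), and the elementary observation that an intersection $\alpha\cap T$ of affine dimension at least $t$ always contains a genuine affine $t$-subspace of $T$, so that the union bound over these $t$-spaces is legitimate.
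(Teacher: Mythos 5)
Your proposal is correct and follows essentially the same route as the paper: fix $T\in\mathcal{T}$, bound the number of elements of $\mathcal{S}_a$ through each affine $t$-subspace of $T$ by Lemma \ref{lemmacounting1affien} with $j=x$, and multiply by the number $q^{x}\qbin{t+x}{x}$ of affine $t$-subspaces of $T$. Your explicit justification of that affine count (direction space plus translates) and of the fact that $\alpha\cap T$ always contains an affine $t$-subspace of $T$ is a slightly more careful write-up of exactly what the paper does.
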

\begin{proof}
	Suppose that $\psi(\mathcal{S}_a)=t+x, \ x\geq 2$. By Lemma \ref{lemmacounting1affien}, we know, for $j\in \{0,1,2,\dots, x\}$, that the number of $k$-spaces of ${\mathcal{S}_a}$  through an affine $(t+x-j)$-space is at most $ \qbin{n-t-x}{k-t-x}(\theta_{k-t})^j$. 
	
	Consider now an element $T\in \mathcal{T}$. Then every element of $\mathcal{S}_a$ meets $T$ in at least a $t$-space. Since there are at least $q^x \qbin{t+x}{x}$ affine $t$-spaces in $T$ and since every $t$-space is contained in at most $(\theta_{k-t})^{x} \qbin{n-t-x}{k-t-x}$ elements of 
	$\mathcal{S}_a$, we find that $\mathcal{S}_a$ has at most $q^x \qbin{t+x}{x}(\theta_{k-t})^{x} \qbin{n-t-x}{k-t-x}$ elements.
\end{proof}

\begin{lemma}\label{lemma1ellisaffien}
	Let $\mathcal{S}_a$ be a maximal set of $k$-spaces, pairwise intersecting in at least a $t$-space in $\AG(n,q)$, {$n> 2k-t, k>t$}. If $\psi(\mathcal{S}_a)=t+1$ and $|\mathcal{T}|\leq 2$, then \[|\mathcal{S}_a|\leq 2\qbin{n-t-1}{k-t-1}+ (\theta_{t+1}\theta_{k-t}-\theta_{t+1}-\theta_{k-t}) \theta_{k-t} \qbin{n-t-2}{k-t-2}.\]
\end{lemma}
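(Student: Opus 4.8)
The plan is to mirror, step for step, the proof of the projective analogue Lemma \ref{lemma1ellis}, using the affine structural facts collected in Lemma \ref{remarksaffien}. Fix a $(t+1)$-space $T\in\mathcal{T}$. By Lemma \ref{remarksaffien}(2) and the maximality of $\mathcal{S}_a$, all $\qbin{n-t-1}{k-t-1}$ affine $k$-spaces through $T$ already lie in $\mathcal{S}_a$, so it remains to bound the set $\mathcal{S}_{a0}$ of elements of $\mathcal{S}_a$ not containing $T$. Since $\psi(\mathcal{S}_a)=t+1$, each such $\pi$ satisfies $\dim(\pi\cap T)=t$, i.e.\ $\pi$ meets $T$ in an affine $t$-space, which is forced to equal $\pi\cap T$. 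I would then bound $|\mathcal{S}_{a0}|$ by a double count: for every affine $t$-space $E\subseteq T$ and every affine $(t+1)$-space $\tau\supseteq E$ with $\tau\neq T$, estimate the number of $\pi\in\mathcal{S}_{a0}$ with $\tau\subseteq\pi$, and observe that each $\pi\in\mathcal{S}_{a0}$ is caught at least once, namely for $E=\pi\cap T$ (if $\tau\subseteq\pi$ and $\tau\neq T$, then $\tau\cap T=E$ forces $\pi\cap T=E$).

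The one genuinely affine ingredient is the number of choices for $E$. A $(t+1)$-space $T$ contains $\theta_{t+1}$ projective $t$-subspaces, but exactly one of these, $\tilde{T}\cap H_\infty$, lies at infinity; hence there are only $\theta_{t+1}-1$ affine $t$-spaces in $T$. This single missing subspace is precisely what turns the projective coefficient $\theta_{t+1}\theta_{k-t}-\theta_{t+1}-1$ into the affine coefficient $\theta_{t+1}\theta_{k-t}-\theta_{t+1}-\theta_{k-t}$. Concretely, in the case $\mathcal{T}=\{T\}$ each $\tau\neq T$ is not in $\mathcal{T}$, so there is a $\sigma\in\mathcal{S}_a$ meeting $\tau$ in at most a $(t-1)$-space, the elements through $\tau$ meet $\sigma\setminus\tau$, giving at most $\theta_{k-t}\qbin{n-t-2}{k-t-2}$ of them; with $\theta_{t+1}-1$ choices for $E$ and $\theta_{k-t}-1$ for $\tau$ this yields $\qbin{n-t-1}{k-t-1}+(\theta_{t+1}-1)(\theta_{k-t}-1)\theta_{k-t}\qbin{n-t-2}{k-t-2}$. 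In the case $\mathcal{T}=\{T,\Psi\}$ I single out the affine $t$-space $E_0:=T\cap\Psi$ (which is an affine $t$-space by Lemma \ref{remarksaffien}(3)) from the remaining $\theta_{t+1}-2$ choices of $E$: for $E=E_0$ the space $\tau=\Psi$ contributes the full $\qbin{n-t-1}{k-t-1}$ $k$-spaces through $\Psi$ and leaves $\theta_{k-t}-2$ other $\tau$, while for $E\neq E_0$ the space $\Psi$ does not pass through $E$ and all $\theta_{k-t}-1$ choices of $\tau$ are bounded as before. Summing gives $2\qbin{n-t-1}{k-t-1}+\left[(\theta_{t+1}-2)(\theta_{k-t}-1)+(\theta_{k-t}-2)\right]\theta_{k-t}\qbin{n-t-2}{k-t-2}$, and the bracket simplifies to $\theta_{t+1}\theta_{k-t}-\theta_{t+1}-\theta_{k-t}$, exactly the claimed bound. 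Finally, this second expression exceeds the first whenever $\qbin{n-t-1}{k-t-1}\geq\theta_{k-t}\qbin{n-t-2}{k-t-2}$, an inequality that holds throughout our range $n>2k-t$ (it reduces to $(q^{n-t-1}-1)(q-1)\geq(q^{k-t+1}-1)(q^{k-t-1}-1)$, a routine estimate), so the single stated bound covers both cases.

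The main obstacle, and the place where the affine argument genuinely departs from the projective one, is justifying that for each affine $t$-space $E\subseteq T$ one may take a witness $\alpha\in\mathcal{S}_{a0}$ with $\dim(\alpha\cap E)=t-1$, so that the count of admissible $\tau$ through $E$ is indeed $\theta_{k-t}-1$ and not larger. Because $\psi(\mathcal{S}_a)=t+1$, the $t$-space $E$ is not a transversal and some element meets it in less than a $t$-space; but in affine space the trace $\alpha\cap T$ may be \emph{parallel} to $E$, in which case the affine intersection drops below $t-1$ and the projective bookkeeping must be redone. I expect to handle this by passing to the projective extensions $\tilde{\alpha}$ and $\tilde{E}$ inside $\tilde{T}$, where two $t$-spaces always meet in a $(t-1)$-space: one counts the projective $(t+1)$-spaces $\tilde{\tau}$ through $\tilde{E}$ that meet $\tilde{\alpha}$ off $\tilde{E}$ (of which exactly one is $\tilde{T}$, leaving $\theta_{k-t}-1$), and then checks that the relevant $\tau$ and the chosen witness are affine, equivalently that $\tilde{\tau}\not\subseteq H_\infty$ and that $\tilde{\alpha}\cap\tilde{E}$ is not forced into $H_\infty$. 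Keeping this translation between the affine objects and their extensions consistent throughout the double count is the delicate part; once it is in place, the remaining arithmetic is routine.
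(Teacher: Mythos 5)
Your argument is correct and follows essentially the same route as the paper's proof: fix $T\in\mathcal{T}$, include all $\qbin{n-t-1}{k-t-1}$ $k$-spaces through $T$, and double-count the remaining elements over pairs $(E,\tau)$ with the same case split on $|\mathcal{T}|\in\{1,2\}$; your count of $\theta_{t+1}-1$ affine $t$-spaces in $T$ is the paper's $q\theta_t$, and your coefficient algebra $(\theta_{t+1}-2)(\theta_{k-t}-1)+(\theta_{k-t}-2)=\theta_{t+1}\theta_{k-t}-\theta_{t+1}-\theta_{k-t}$ matches the paper's computation exactly. Your extra care about the parallel case --- where the witness $\alpha$ may satisfy $\alpha\cap E=\emptyset$ rather than $\dim(\alpha\cap E)=t-1$, so that one must pass to the projective extensions $\tilde{\alpha},\tilde{E}$ inside $\tilde{T}$ to recover the bound of $\theta_{k-t}-1$ admissible $\tau$ --- is a legitimate refinement of a point the paper elides by simply asserting $\dim(\alpha\cap E)=t-1$.
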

\begin{proof}
	Let $T$ be an element of $\mathcal{T}$. Since $\mathcal{S}_a$ is a maximal set, we know that all $\qbin{n-t-1}{k-t-1}$ $k$-spaces through $T$ are contained in $\mathcal{S}_a$. Now we count the size of the set $\mathcal{S}_{a0}$ of $k$-spaces of $\mathcal{S}_a$ not through $T$. For every $\pi\in \mathcal{S}_{a0}$, $\dim(\pi\cap T)=t$, and let $E$ be an affine $t$-space in $T$. Then there exists an element $\alpha\in \mathcal{S}_{a0}$ not through $E$, and so $\dim(\alpha \cap E)=t-1$. Hence, every element $\pi$ of $\mathcal{S}_{a0}$, through $E$ must contain a $(t+1)$-space $\tau$, different from  $T$, such that $E\subset \tau$ and $\tau\cap (\alpha \setminus E) \neq \emptyset$. Note that there are $\theta_{k-t}-1$ possibilities for $\tau$. Fix such a $(t+1)$-space $\tau$. 
	\begin{itemize}
		\item If $\mathcal{T}=\{T\}$, we know that $\tau \notin \mathcal{T}$, and hence there exists an element $\sigma$ of $\mathcal{S}_a$, meeting $\tau$ in at most a $(t-1)$-space. Hence, every element of $\mathcal{S}_{a0}$ through $\tau$ meets $\sigma \setminus \tau$, and so the number of elements of $\mathcal{S}_{a0}$ through $\tau$ is at most $\theta_{k-t} \qbin{n-t-2}{k-t-2}$. 
		Since there are $q\theta_{t}$ possibilities for $E$, and at most $\theta_{k-t} -1$ for $\tau$, we have that \[|\mathcal{S}_a|\leq \qbin{n-t-1}{k-t-1}+q\theta_{t} (\theta_{k-t}-1)\theta_{k-t} \qbin{n-t-2}{k-t-2}.\]
		
		\item Suppose $|\mathcal{T}|=2$, and let $\mathcal{T}=\{T, \Psi\}$. If $\tau=\Psi$, then $\mathcal{S}_a$ contains all $\qbin{n-t-1}{k-t-1}$ $k$-spaces through $\tau$. If $\tau\neq \Psi$, then we can follow the argument in the previous item, and we find that the number of elements of $\mathcal{S}_{a0}$ through $\tau$ is at most $\theta_{k-t} \qbin{n-t-2}{k-t-2}$. 
		Note that there are $q\theta_{t}-1$ possibilities for $E\neq T\cap \Psi$, and at most $\theta_{k-t} -1$ for $\tau\neq \Psi, T$, through $E\neq T\cap \Psi$. Moreover, there are at most $\theta_{k-t} -2$ possibilities for $\tau\neq \Psi$ through $E= T\cap \Psi$. Hence, we have that \begin{align*}
		|\mathcal{S}_a|\leq& \qbin{n-t-1}{k-t-1}+ \sum_{E\subset T}\sum_{\tau \supseteq E} |\{\pi \in \mathcal{S}_{a0}|\tau \subset \pi\}| \\
		\leq & \qbin{n-t-1}{k-t-1}+ \sum_{E\neq T\cap \Psi }\sum_{\tau \supseteq E} \theta_{k-t} \qbin{n-t-2}{k-t-2}+\sum_{\tau \supset T\cap \Psi} |\{\pi \in \mathcal{S}_{a0}|\tau \subset \pi\}|\\
		\leq & \qbin{n-t-1}{k-t-1}+ (q\theta_{t}-1)(\theta_{k-t}-1) \theta_{k-t} \qbin{n-t-2}{k-t-2}+\sum_{\tau \neq  \Psi} \theta_{k-t}\qbin{n-t-2}{k-t-2}+\qbin{n-t-1}{k-t-1}\\
		\leq &  2\qbin{n-t-1}{k-t-1}+ (q\theta_{t}-1)(\theta_{k-t}-1) \theta_{k-t} \qbin{n-t-2}{k-t-2}+(\theta_{k-t}-2) \theta_{k-t}\qbin{n-t-2}{k-t-2}\\
		= &  2\qbin{n-t-1}{k-t-1}+ (\theta_{t+1}\theta_{k-t}-\theta_{t+1}-\theta_{k-t}) \theta_{k-t} \qbin{n-t-2}{k-t-2}.\\
		\end{align*}
	\end{itemize}
	The lemma follows since \[2\qbin{n-t-1}{k-t-1}+ (\theta_{t+1}\theta_{k-t}-\theta_{t+1}-\theta_{k-t}) \theta_{k-t} \qbin{n-t-2}{k-t-2}\geq \qbin{n-t-1}{k-t-1}+q\theta_{t} (\theta_{k-t}-1)\theta_{k-t} \qbin{n-t-2}{k-t-2}\] for $k>t, n\geq 2k-t, q\geq 2$ (see Lemma \ref{Lemma 47B}).
\end{proof}


From now on, we define $f_a(q,n,k,t)$ as the maximum of the number of elements in the sets described in Example \ref{example} and Example \ref{example2} for $n>2k-t$.
\begin{align*}
f_a(q,n,k,t)=\max\{&\theta_{k}-\theta_{k-t}+\qbin{n-t}{k-t}- q^{(k-t+1)(k-t)}\qbin{n-k-1}{k-t} ,\\ &\theta_{t+1}\cdot\left(\qbin{n-t-1}{k-t-1}-\qbin{n-t-2}{k-t-2}\right)+\qbin{n-t-2}{k-t-2} \}.
\end{align*}

From Lemma \ref{appendixaffienverschil1}, \ref{appendixaffienverschil2} and \ref{appendixaffienverschil3}, we find for $n>2k-t, k>t+1, q\geq 3$ that 

\begin{align*}
f_a(q,n,k,t)=&\begin{cases}
\theta_{k}-\theta_{k-t}+\qbin{n-t}{k-t}- q^{(k-t+1)(k-t)}\qbin{n-k-1}{k-t} &\text{if } k>2t+1 \\
\theta_{t+1}\cdot\left(\qbin{n-t-1}{k-t-1}-\qbin{n-t-2}{k-t-2}\right)+\qbin{n-t-2}{k-t-2} &\text{if } k\leq2t+1.
\end{cases}	
\end{align*}


\begin{theorem}
	Let $\mathcal{S}_a$ be a maximal set of $k$-spaces, pairwise intersecting in at least a $t$-space in $\AG(n,q)$, {$ k>t+1$, $t>0$,} with $q\geq 4$, and $n> 2k+t+2,$ (or $q=3$ and $n> 2k+t+3$). If $\mathcal{S}_a$ is not a $t$-pencil, then 
	\begin{align*}
	|\mathcal{S}_a|\leq f_a(q,n,k,t). 
	\end{align*}
	Equality occurs if and only if $\mathcal{S}_a$ is Example \ref{example} for $k>2t+1$ or Example \ref{example2} for $k\leq 2t+1$.
\end{theorem}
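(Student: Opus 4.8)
The plan is to mirror the projective argument, treating the hyperplane at infinity $H_\infty$ as the only genuine source of extra bookkeeping. Assume $\mathcal{S}_a$ is a maximal, non-$t$-pencil family with $|\mathcal{S}_a|\geq f_a(q,n,k,t)$; I will show it must be one of the two examples, so that in particular $|\mathcal{S}_a|=f_a$. First I determine $\psi(\mathcal{S}_a)$. Lemma \ref{remarksaffien}$(1)$ gives $t\leq\psi(\mathcal{S}_a)\leq k$, and excluding the $t$-pencil forces $\psi(\mathcal{S}_a)>t$. To discard $\psi(\mathcal{S}_a)\geq t+2$, I feed the estimate $|\mathcal{S}_a|\leq q^x\qbin{t+x}{x}(\theta_{k-t})^x\qbin{n-t-x}{k-t-x}$ of Lemma \ref{lemmacountingaffien} into an affine size comparison (the analogue of the projective Lemma \ref{lemmaappendixlelijk}) showing this quantity is strictly below $f_a(q,n,k,t)$ for every $x\geq2$ in the stated ranges of $q$ and $n$; this contradicts $|\mathcal{S}_a|\geq f_a$ and leaves $\psi(\mathcal{S}_a)=t+1$.

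Now $\mathcal{T}$ consists of $(t+1)$-spaces. By Lemma \ref{lemma1ellisaffien}, if $|\mathcal{T}|\leq2$ then $|\mathcal{S}_a|$ is bounded by a quantity lying strictly below $f_a(q,n,k,t)$ (the affine analogue of Lemma \ref{appendixgeennaammeer}), a contradiction; hence $|\mathcal{T}|>2$. By Lemma \ref{remarksaffien}$(3)$ the family $\mathcal{T}$ is affinely $t$-intersecting, so the projective extensions $\tilde{\mathcal{T}}$ form a set of $(t+1)$-spaces in $\PG(n,q)$ pairwise meeting in a $t$-space not contained in $H_\infty$. Applying Theorem \ref{basisEKRthm} to $\tilde{\mathcal{T}}$ (with parameter $t+1$), either the members share a common $t$-space, or they all lie in a common $(t+2)$-space. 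Since each $\tilde{T}$ extends an affine space, $\tilde{T}\nsubseteq H_\infty$; hence in the second alternative the $(t+2)$-space $\Gamma$ is affine, while in the first alternative two distinct members meet exactly in the shared $t$-space, which by the affine-intersection property is not in $H_\infty$, so it is the projective extension of an affine $t$-space $\delta$.

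In the $(t+2)$-space alternative I argue, as in the projective case, that every $\pi\in\mathcal{S}_a$ meets $\Gamma$ in at least a $(t+1)$-space: a $t$-dimensional trace $W=\pi\cap\Gamma$ would have to be contained in every member of $\mathcal{T}$ (otherwise $\dim(\pi\cap T)<t$ for some $T\in\mathcal{T}$), contradicting the fact that here $\mathcal{T}$ has no common $t$-space. Maximality of $\mathcal{S}_a$, together with the requirement that the $(t+1)$-dimensional $\Gamma$-traces pairwise meet in affine $t$-spaces (equivalently, have distinct $H_\infty$-traces), then identifies $\mathcal{T}$ with a maximal system $\mathcal{R}$ of $\theta_{t+1}$ such $(t+1)$-spaces and identifies $\mathcal{S}_a$ with Example \ref{example2}. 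In the common-$t$-space alternative, with affine vertex $\delta$, I bound the span $Z$ of all elements of $\mathcal{T}$ by $\dim Z\leq k+1$ and, fixing some $\pi\in\mathcal{S}_a$ with $\delta\nsubseteq\pi$, use a counting estimate (the affine analogue of Lemma \ref{appendixlaatste}) to force every $\delta$-free element of $\mathcal{S}_a$ into $\langle\pi,\delta\rangle$; maximality then yields Example \ref{example}. Finally, the size comparison against $f_a$ selects the consistent alternative: Example \ref{example} when $k>2t+1$ and Example \ref{example2} when $k\leq2t+1$, since in each regime the other configuration has size strictly less than $f_a$, contradicting $|\mathcal{S}_a|\geq f_a$.

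The principal obstacle is the $H_\infty$-bookkeeping in the two alternatives: I must verify that the purely projective classification of $\tilde{\mathcal{T}}$ via Theorem \ref{basisEKRthm} descends to the correct affine configuration, with the vertex $\delta$ (respectively the space $\Gamma$) genuinely affine and the reconstructed transversal systems (the set $S_1$ in Example \ref{example}, the set $\mathcal{R}$ in Example \ref{example2}) matching exactly, not merely up to their projective shadow. A second, more computational, difficulty is marshalling the affine size inequalities so that the crossover between the two examples falls at $k=2t+1$ rather than at $k=2t+2$ as in $\PG(n,q)$; this one-unit shift is caused by the replacement of the factors $\theta_{k+1}$ and $\theta_{t+2}$ in the projective example sizes by $\theta_k$ and $\theta_{t+1}$ in the affine ones, and it must be tracked consistently through Lemmas \ref{appendixaffienverschil1}, \ref{appendixaffienverschil2} and \ref{appendixaffienverschil3}.
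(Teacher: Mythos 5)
Your outline reproduces the paper's strategy step for step (pin down $\psi(\mathcal{S}_a)=t+1$, force $|\mathcal{T}|>2$, split via Theorem \ref{basisEKRthm} into the $(t+2)$-space and $t$-pencil alternatives), but the two places where you defer to ``$H_\infty$-bookkeeping'' are precisely where the affine proof cannot be obtained by mirroring the projective one, and in both places your proposal has a genuine gap. In the $(t+2)$-space alternative you assert that maximality ``identifies $\mathcal{S}_a$ with Example \ref{example2}.'' It does not: knowing that every element of $\mathcal{S}_a$ meets $\Gamma$ in at least a $(t+1)$-space gives no bound of the form $\theta_{t+1}\left(\qbin{n-t-1}{k-t-1}-\qbin{n-t-2}{k-t-2}\right)+\qbin{n-t-2}{k-t-2}$, because for a fixed $t$-space $\alpha_t\subset\tilde{\Gamma}\cap H_\infty$ the family could a priori contain elements whose $\Gamma$-traces are \emph{different} (parallel) affine $(t+1)$-spaces through $\alpha_t$, as long as those $k$-spaces still meet each other in an affine $t$-space outside $\Gamma$. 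The paper excludes this with a dedicated counting argument (its claim $(*)$): if two such elements $\pi_1,\pi_2$ exist, every further element through $\alpha_t$ must in addition meet $\pi_1$ (or $\pi_2$) in an affine $t$-space not contained in $\Gamma$, and the resulting count $q(q^{k-t-1}-1)\qbin{n-t-2}{k-t-2}$ is strictly below $\qbin{n-t-1}{k-t-1}-\qbin{n-t-2}{k-t-2}$. Without this step you obtain neither the upper bound nor the characterisation of equality in this branch.

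The second gap is in the $t$-pencil alternative, where you invoke ``the affine analogue of Lemma \ref{appendixlaatste}'' for all $x\geq 2$. The direct analogue (Lemma \ref{appendixlaatsteaffienextra}) holds only for $x\geq 3$: for $x=2$ the term $\theta_{t+2}\qbin{n-t-1}{k-t-1}$ is too large to be beaten by the affine example size in the regime $k>2t+1$ — the critical case is $k=2t+2$, which is excluded in the projective setting $k>2t+2$ but not here. This is exactly the one-unit shift you mention in your closing paragraph, but it bites inside the proof, not only in the crossover Lemmas \ref{appendixaffienverschil1}--\ref{appendixaffienverschil3}. The paper closes the $x=2$ case by re-running the $(*)$-argument inside the $(t+2)$-space $Z$: for each $t$-space $\alpha_t\subset\tilde{Z}\cap H_\infty$ with $\tilde{\delta}\cap H_\infty\nsubseteq\alpha_t$, all elements of $\mathcal{S}_a$ through $\alpha_t$ must meet $Z$ in the same $(t+1)$-space, which cuts the number of admissible traces from $\theta_{t+2}$ to $q^2\theta_{t-1}$ and lets Lemma \ref{appendixlaatsteaffien} finish. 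You correctly name both difficulties as obstacles, but the argument at infinity sketched above is the essential new content of the affine theorem relative to the projective one, and it is absent from your proposal.
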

\begin{proof} 
	Let $\mathcal{S}_a$ be a maximal set of $k$-spaces, pairwise intersecting in at least a $t$-space, in $\AG(n,q)$, with $\mathcal{S}_a$ not a $t$-pencil, and suppose that $|\mathcal{S}_a|\geq f_a(q,n,k,t)$. From Lemma  \ref{lemmacountingaffien} and Lemma \ref{lemmaappendixaffienlelijk}, it follows that $\psi(\mathcal{S}_a)<t+2$ for  {$ k>t+1$, $t>0$,} with $q\geq 4$, and $n> 2k+t+2,$ (or $q=3$ and $n> 2k+t+3$). Since $\mathcal{S}_a$ is not a $t$-pencil, we find that $\psi(\mathcal{S}_a)>t$, and so $\psi(\mathcal{S}_a)=t+1$.
	
	From Lemma \ref{lemma1ellisaffien}, it follows that if $|{\mathcal{T}}|\leq 2$, then $|{{\mathcal{S}_a}}|\leq 2\qbin{n-t-1}{k-t-1}+ (\theta_{t+1}\theta_{k-t}-\theta_{t+1}-\theta_{k-t}) \theta_{k-t} \qbin{n-t-2}{k-t-2}$,
	a contradiction by Lemma \ref{appendixaffienbla} for $n>2k+t+2, k>t+1, q\geq 3, t>0$. 
	Hence, $|{{\mathcal{T}}}|>2$. 
	From Lemma \ref{remarksaffien}$(3)$, it follows that $\mathcal{T}$ is a $t$-intersecting set of $(t+1)$-spaces. Hence,
	$\tilde{\mathcal{T}}$, (and so $\mathcal{T}$), is contained in a $t$-pencil or all elements of $\tilde{\mathcal{T}}$, (and so $\mathcal{T}$),  are contained in a $(t+2)$-space (see Theorem \ref{basisEKRthm}).
	
	We first suppose that there is no $t$-space contained in all elements of  $\mathcal{T}$. Hence, we know that all elements of $\mathcal{T}$ are contained in a $(t+2)$-space $\Gamma$.
	We also know that the elements of $\mathcal{T}$ are $t$-intersecting in the affine space, and so, every $t$-space in $\tilde{\Gamma}\cap H_\infty$ is contained in at most one element of $\mathcal{T}$. Moreover, we also find that every element $\pi_1$ of $\mathcal{S}_a$ must meet $\Gamma$ in at least a $(t+1)$-space. This follows since $\pi_1$ must meet all elements of $\mathcal{T}$ in at least a $t$-space, and the elements of $\mathcal{T}$ are contained in a $(t+2)$-space.

	In this case, we claim that the number of elements of $\mathcal{S}_a$ is at most $\theta_{t+1}\cdot\left(\qbin{n-t-1}{k-t-1}-\qbin{n-t-2}{k-t-2}\right)+\qbin{n-t-2}{k-t-2}$. We also claim that equality holds 
	if and only if there exists a set $\mathcal{R}$ of affine $(t+1)$-spaces in $\Gamma$ such that no two elements of $\tilde{\mathcal{R}}$ meet in a $t$-space in $H_\infty$. Then, $\mathcal{S}_a$ is the set of all $k$-spaces that meet $\Gamma$ in an element of $\mathcal{R}$, and so, $\mathcal{S}_a$ is Example \ref{example2} . $(*)$
	
	To prove this, we first of all note
	that all $k$-spaces through $\Gamma$ are contained in $\mathcal{S}_a$. Consider a $t$-space $\alpha_t\subset \tilde{\Gamma}\cap H_\infty$. Then we count the number of elements of $\tilde{\mathcal{S}_a}$ through $\alpha_t$, not through $\Gamma$. There are two possibilities.
	\begin{itemize}
		\item All these elements meet $\tilde{\Gamma}$ in the same affine $(t+1)$-space through $\alpha_t$. Then the number of elements of $\tilde{\mathcal{S}_a}$ through $\alpha_t$ and not through $\Gamma$ is $\qbin{n-t-1}{k-t-1}-\qbin{n-t-2}{k-t-2}$. If this is the case for all $t$-spaces $\alpha_t \subset \tilde{\Gamma}\cap H_\infty$, then $|\mathcal{S}_a|=\theta_{t+1}\cdot\left(\qbin{n-t-1}{k-t-1}-\qbin{n-t-2}{k-t-2}\right)+\qbin{n-t-2}{k-t-2}$, and $\mathcal{S}_a$ is of the described form. 
		\item  There is a $t$-space $\alpha_t\subset \tilde{\Gamma}\cap H_\infty$, such that there are two elements $\pi_1, \pi_2\in \mathcal{S}_a$, not contained in $\Gamma$, with $\alpha_t\in \tilde{\pi}_1, \tilde{\pi}_2$, but $\pi_1\cap \Gamma\neq \pi_2\cap \Gamma$. Then every element $\pi$ of $\tilde{\mathcal{S}_a}$ through $\alpha_t$, not through $\pi_1\cap \Gamma$, meets $\tilde{\Gamma}$ in a $(t+1)$-space through $\alpha_t$ and meets $\pi_1$ in an affine point outside of $\Gamma$. For the elements of $\tilde{\mathcal{S}_a}$ not through $\tilde{\Gamma}$ and through $\pi_1\cap \Gamma$, we can use the same argument by using $\pi_2$.
		  Note that $\tilde{\pi}$ meets $\tilde{\Gamma}$ in one of the $q$ affine $(t+1)$-spaces in $\tilde{\Gamma}$ through  $\alpha_t$. 
		  	\begin{itemize}
		  		\item If $\tilde{\pi} \cap \tilde{\Gamma}\neq\tilde{\pi}_1 \cap \tilde{\Gamma}$ then there are $q^{k-t-1}-1$ ways to extend this $(t+1)$-space $\pi \cap \Gamma$ to a $(t+2)$-space, meeting $\pi_1$ in an affine $(t+1)$-space, not in $\Gamma$. By investigating the quotient space of $\alpha_t$ in $\tilde{\pi}_1$: there are $q^{k-t-1}$ ways to extend $\tilde{\pi} \cap \tilde{\Gamma}$ to a $(t+2)$-space meeting $\pi_1$ in an affine $(t+1)$-space, and one of these extended $(t+2)$-spaces is equal to $\Gamma$.
		  		\item If $\tilde{\pi} \cap \tilde{\Gamma}=\tilde{\pi}_1 \cap \tilde{\Gamma}$ then $\tilde{\pi} \cap \tilde{\Gamma}\neq\tilde{\pi}_2 \cap \tilde{\Gamma}$. Hence, we can use the same argument from the previous point to see that there are $q^{k-t-1}-1$ ways to extend this $(t+1)$-space to a $(t+2)$-space, meeting $\pi_2$ in an affine $(t+1)$-space, not in $\Gamma$.
		  	\end{itemize}
		  	Hence, there are at most $q (q^{k-t-1}-1)\cdot  \qbin{n-t-2}{k-t-2}$ elements of $\tilde{\mathcal{S}_a}$ through $\alpha_t$ and not through $\Gamma$. The fact that 
		  	\begin{align*}
		  		q(q^{k-t-1}-1)=\frac{q^{2k-2t-1}-2q^{k-t}+q}{q^{k-t-1}-1}<\frac{q^{n-t-1}-q^{k-t-1}-2q^{k-t}+q}{q^{k-t-1}-1}<\frac{q^{n-t-1}-q^{k-t-1}}{q^{k-t-1}-1},
		  	\end{align*}for $n>2k-t, k>t+1, q\geq 3$, implies that 
		  	$q (q^{k-t-1}-1)\cdot  \qbin{n-t-2}{k-t-2}<\frac{q^{n-t-1}-q^{k-t-1}}{q^{k-t-1}-1}\qbin{n-t-2}{k-t-2}= \qbin{n-t-1}{k-t-1}-\qbin{n-t-2}{k-t-2}$. This
		  	proves the claim.
	\end{itemize}
	
	So $|\mathcal{S}_a|= \theta_{t+1}\cdot\left(\qbin{n-t-1}{k-t-1}-\qbin{n-t-2}{k-t-2}\right)+\qbin{n-t-2}{k-t-2} $ if there is no $t$-space contained in all elements of $\mathcal{T}$.  This number is larger than $\theta_{k}-\theta_{k-t}+\qbin{n-t}{k-t}- q^{(k-t-1)(k-t)}\qbin{n-k-1}{k-t}$, if and only if $k\leq2t+1$. So, for $k>2t+1$, we find a contradiction. Hence, for $k>2t+1$, we know that the elements of $\mathcal{T}$ are contained in a $t$-pencil with vertex the affine $t$-space $\delta$. Let $Z$ be the span of all elements of $\mathcal{T}$ and let $\dim(Z)=t+x$, $x\geq 2$. Since $\mathcal{S}_a$ is not a $t$-pencil, we know that there are $k$-spaces in $\mathcal{S}_a$ that do not contain $\delta$.  Note that these elements of $\mathcal{S}_a$, not through $\delta$, meet $\delta$ in a $(t-1)$-space, and meet $Z$ in a $(t+x-1)$-space. The dimension of the span $Z$ of all the $(t+1)$-spaces in $\mathcal{T}$ is at most $k+1$: if $\dim(Z)>k+1$, then every $k$-space of $\mathcal{S}_a$, not through $\delta$ would meet $Z$ in a subspace with dimendion $\dim(Z)-1>k$, a contradiction. 
	
	Let $\pi\in \mathcal{S}_a$ be an element that does not contain $\delta$. Note that every element of $\mathcal{S}_a$ through $\delta$ has at least a $(t+1)$-space in common with $\langle \pi, \delta \rangle$.
	Now we claim that all elements of $\mathcal{S}_a$, not through $\delta$, are contained in $\langle \pi, \delta \rangle$.
	Suppose that this is not the case, then there exists an element $\pi_2\in \mathcal{S}_a$ with $\delta \nsubseteq \pi_2$ and $\pi_2 \nsubseteq\langle \pi,\delta \rangle$. Then every element of $\mathcal{S}_a$ through $\delta$ meets both $\pi\setminus \delta$ and $\pi_2\setminus \delta$. Hence the number of elements of $\mathcal{S}_a$, through $\delta$, is at most $\theta_{k-t}^{2}\qbin{n-t-2}{k-t-2}+\theta_{k-t-1}\qbin{n-t-1}{k-t-1}$. Here, the first term is an upper bound on the number of elements meeting both $\pi \setminus \pi_2$ and $\pi_2 \setminus \pi$. The second term is an upper bound on the number of elements meeting $(\pi\cap \pi_2) \setminus \delta$, since $\dim( (\pi\cap \pi_2)\setminus \delta)\leq k-t-1$. Every element of $\mathcal{S}_a$ not through $\delta$ meets $Z$ in a $(t+x-1)$-space. This implies that that $|\mathcal{S}_a|\leq \theta_{t+x}\qbin{n-t-x+1}{k-t-x+1}+\theta_{k-t}^{2}\qbin{n-t-2}{k-t-2}+\theta_{k-t-1}\qbin{n-t-1}{k-t-1}$. For $n>2k+t+2, k>2t+1, t>0, x\geq 3, q\geq 3$; this gives a contradiction by Lemma \ref{appendixlaatsteaffienextra}, since $|S|\geq f_a(q,n,k,t)$. Now, in a last step, we also have to find a contradiction for $x=2$, and so $Z$ a $(t+2)$-space. In this situation, all $k$-spaces not through $\delta$ must meet $Z$ in a $(t+1)$-space, not through $\delta$. Now, every two elements of $\mathcal{S}$, not through $\delta$ must meet in at least a $t$-space. The same argument, used to deduce $(*)$, can be used to show the following. For every $t$-space $\alpha_t \subset \tilde{Z}\cap H_\infty$, $\tilde{\delta}\cap H_\infty \nsubseteq\alpha_t$, we have that all elements of $\tilde{\mathcal{S}_a}$ through $\alpha_t$ must meet $Z$ in the same $(t+1)$-space. Hence, there are at most $\theta_{t+1}-\theta_1$ possibilities for the intersection $\pi\cap Z$, with $\pi\in \mathcal{S}_a$, $\delta \nsubseteq \pi$, and there are at most $\qbin{n-t-1}{k-t-1}$ $k$-spaces through a fixed $(t+1)$-space. Hence we find that the number of elemtents of $\mathcal{S}_a$, not through $\delta$ is at most $q^2 \theta_{t-1}\qbin{n-t-1}{k-t-1}$, and so $|\mathcal{S}_a|\leq q^2 \theta_{t-1}\qbin{n-t-1}{k-t-1}+\theta_{k-t}^{2}\qbin{n-t-2}{k-t-2}+\theta_{k-t-1}\qbin{n-t-1}{k-t-1}$. This gives a contradiction for $n>2k+t+2, k>2t+1$ and $q\geq 3$ by Lemma \ref{appendixlaatsteaffien} since $|S|\geq f_a(q,n,k,t)$.
	Hence, we find that every element of $\mathcal{S}_a$, not through $\delta$ is contained in $\langle \delta, \pi \rangle$, and so $\mathcal{S}_a$ is contained in Example \ref{example}. The theorem follows from the maximality of $\mathcal{S}_a$.
	
\end{proof}

\section{Erd\H{o}s-Ko-Rado result in $\AG(n,q)$}\label{sectionappendix}
In \cite{GuoXu}, the authors prove that the largest $t$-intersecting set of $k$-spaces in $\AG(n,q)$, with $n\geq 2k+t+2$, is the set of all $k$-spaces through a fixed affine $t$-space. They use geometrical and combinatorical techniques, but they do not use the connection between $\AG(n,q)$ and $\PG(n,q)$. Below we give a shorter proof for this result, for $n\geq 2k+1$, by using the corresponding result in $\PG(n,q)$.
\begin{theorem}{\textbf{\cite[Theorem 1]{frankl}}}\label{q-analogue}
	Let $t$ and $k$ be integers, with $0 \leq t \leq k$. Let $\mathcal{S}$ be a set of $k$-spaces
	in $\PG(n,q)$, pairwise intersecting in at least a $t$-space.
	\begin{itemize}
		\item [(i)] If $n \geq 2k + 1$, then $| \mathcal{S} |
		\leq \qbin{n-t}{k-t}$. Equality holds if and only if $\mathcal{S}$ is the set of all the $k$-spaces, containing a fixed $t$-space of $\PG(n,q)$, or $n = 2k +1$ and $\mathcal{S}$ is the set of all the $k$-spaces in a fixed $(2k -t)$-space.
		\item[(ii)] If $2k-t \leq n \leq 2k$, then $|\mathcal{S}| \leq \qbin{2k-t+1}{k-t}$. Equality holds if and only if $\mathcal{S}$ is the set of all the $k$-spaces in a fixed $(2k-t)$-space.
	\end{itemize}
\end{theorem}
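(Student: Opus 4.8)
The plan is to prove the bound and the equality description separately in the two regimes, after noting which families are extremal: in case (i) a $t$-pencil attains $\qbin{n-t}{k-t}$, while in case (ii) (and at the single boundary value $n=2k+1$ of case (i)) the family of all $k$-spaces inside a fixed $(2k-t)$-space attains the stated size. So the work is to show no family is larger, and that these are the only families meeting the bound. Throughout I would keep the quantity $k-t$ fixed and induct on $t$, since both natural reductions — quotienting by a common point and intersecting with a hyperplane — preserve $k-t$ while lowering $t$.

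For case (i) I would argue by induction on $t$, the base case $t=k-1$ being exactly Theorem~\ref{basisEKRthm}: a set of $k$-spaces pairwise meeting in a $(k-1)$-space is either a $(k-1)$-pencil or is contained in a $(k+1)$-space, and comparing the sizes $\theta_{n-k}$ and $\theta_{k+1}$ gives the claim, the two sizes coinciding precisely when $n=2k+1$. For the inductive step with $t\le k-2$, I would fix a member $A\in\mathcal{S}$ and split according to whether the members of $\mathcal{S}$ share a common point. If they share a point $P$, I pass to the quotient $\PG(n,q)/P\cong\PG(n-1,q)$, in which the members become $(k-1)$-spaces pairwise meeting in at least a $(t-1)$-space; since $n\ge 2k+1$ forces the quotient to lie strictly inside the range of case (i), the induction hypothesis gives $|\mathcal{S}|\le \qbin{(n-1)-(t-1)}{(k-1)-(t-1)}=\qbin{n-t}{k-t}$, with equality only for a $(t-1)$-pencil, which lifts to a $t$-pencil in $\PG(n,q)$. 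This disposes of every family possessing a common point.

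The main obstacle is the complementary case, where $\mathcal{S}$ has no common point; this is where both the $(2k-t)$-space example and the delicate boundary behaviour live, and where neither reduction applies to the extremal family. Here the goal is to show that such a family is strictly smaller than a pencil when $n>2k+1$, and that equality forces $n=2k+1$ together with all members lying in a common $(2k-t)$-space. The strategy I would follow is to count members of $\mathcal{S}$ by their trace on the fixed member $A$: every member meets $A$ in at least a $t$-space, so choosing a $(k-t)$-space $\sigma_0\subset A$ and applying Lemma~\ref{lemmadisjunct} to count the $k$-spaces meeting $\sigma_0$ — exactly as in the size computation of Example~\ref{examplep} — bounds the total, while the \emph{absence} of a common point caps the number of admissible traces and drives this bound strictly below $\qbin{n-t}{k-t}$ as soon as $n>2k+1$. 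Pinning down the single value $n=2k+1$, where the $(2k-t)$-space family exactly catches up with the pencil, is the most delicate point: here I would feed the associated family of $t$-spaces covering $\mathcal{S}$ into the structure theorem (Theorem~\ref{basisEKRthm}) to certify that equality without a common point forces all members into one common $(2k-t)$-space.

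Finally, for case (ii) with $2k-t\le n\le 2k$, the members automatically meet in dimension at least $2k-n$; at $n=2k-t$ this equals $t$, so the condition is vacuous and all $\qbin{2k-t+1}{k-t}$ $k$-spaces qualify, giving the bound trivially. For $2k-t<n\le 2k$ I would again separate the common-point case, handled by the same quotient induction now terminating inside this smaller ambient space, from the no-common-point case, which is governed by the identical structural analysis and forces the extremal family into a common $(2k-t)$-space. The equality statements in both cases then follow by combining the lifted pencil description coming from the induction with the structural conclusion of the no-common-point analysis, which is the step I expect to require the most care.
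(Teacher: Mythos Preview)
The paper does not prove this theorem at all: it is a citation of the Frankl--Wilson result \cite{frankl}, stated twice (as Theorem~\ref{frankl} and again here as Theorem~\ref{q-analogue}) and used purely as a black box, most notably to derive the short affine EKR theorem in Section~\ref{sectionappendix}. There is therefore no ``paper's own proof'' to compare your proposal against.

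Since you have attempted an independent proof, a few remarks on the proposal itself. First, the inductive set-up is inconsistent: you say you keep $k-t$ fixed and lower $t$ via the quotient $(n,k,t)\mapsto(n-1,k-1,t-1)$, but then declare the base case to be $t=k-1$, which forces $k-t=1$ and so cannot serve as the base for any other value of $k-t$. The natural base for this induction is $t=0$, which is already the full vector-space EKR theorem for intersecting families and is not a consequence of Theorem~\ref{basisEKRthm}. Second, and more seriously, the ``no common point'' branch is where essentially all the content of the Frankl--Wilson theorem lies, and your sketch does not supply a mechanism that works: the counting via Lemma~\ref{lemmadisjunct} bounds the number of $k$-spaces meeting a fixed $(k-t)$-space, but it does not by itself drop below $\qbin{n-t}{k-t}$ without further structural input, and invoking Theorem~\ref{basisEKRthm} on ``the associated family of $t$-spaces'' is not legitimate since that theorem concerns families pairwise meeting in codimension one, not in a $t$-space with $t<k-1$. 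The actual Frankl--Wilson argument uses rather different machinery (shadows and Kruskal--Katona type bounds in the $q$-analogue setting), and the uniqueness at $n=2k+1$ is genuinely delicate; your outline does not yet contain the key idea needed to close either gap.
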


\begin{theorem}
	Let $\mathcal{S}$ be a set of $k$-spaces in $\AG(n,q)$, $n\geq2k+1, k\geq t \geq 0$, pairwise
	intersecting in a $t$-space such that $|\mathcal{S}|$ is maximal, then $\mathcal{S}$ is a $t$-pencil.
\end{theorem}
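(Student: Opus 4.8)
The plan is to transfer the problem to $\PG(n,q)$ through the projective-extension map and then invoke the projective classification, Theorem \ref{q-analogue}(i), which already does the hard combinatorial work. First I would record two elementary properties of the map $\alpha\mapsto\tilde\alpha$ sending an affine $k$-space to its projective closure. It is injective, since $\alpha=\tilde\alpha\setminus H_\infty$ recovers $\alpha$ from $\tilde\alpha$; and it preserves $t$-intersection, because if $\alpha,\beta\in\mathcal{S}$ meet in an affine $t$-space $\gamma$, then $\tilde\gamma\subseteq\tilde\alpha\cap\tilde\beta$ forces $\dim(\tilde\alpha\cap\tilde\beta)\geq t$. Hence $\tilde{\mathcal{S}}$ is a family of projective $k$-spaces in $\PG(n,q)$ pairwise meeting in at least a $t$-space, with $|\tilde{\mathcal{S}}|=|\mathcal{S}|$. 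Note also that every element of $\tilde{\mathcal{S}}$ is a closure and therefore is \emph{not} contained in $H_\infty$; this observation is what will let me discard the degenerate extremal configurations later.

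Next I would pin down the extremal value. A direct count shows that an affine $t$-pencil has exactly $\qbin{n-t}{k-t}$ elements: the projective $k$-spaces through $\tilde\delta$ are precisely the closures of the affine $k$-spaces through a fixed affine $t$-space $\delta$, because $\tilde\delta\nsubseteq H_\infty$ guarantees that every projective $k$-space through $\tilde\delta$ avoids being contained in $H_\infty$. Since an affine $t$-pencil is itself a $t$-intersecting family, the maximal size is at least $\qbin{n-t}{k-t}$. On the other hand, Theorem \ref{q-analogue}(i) applies (as $n\geq 2k+1$) and gives $|\tilde{\mathcal{S}}|\leq\qbin{n-t}{k-t}$. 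Combining the two, the maximum equals $\qbin{n-t}{k-t}$, and for a family $\mathcal{S}$ of maximal size the projective family $\tilde{\mathcal{S}}$ attains equality in Theorem \ref{q-analogue}(i).

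It then remains to read off the equality cases and translate back. Theorem \ref{q-analogue}(i) forces $\tilde{\mathcal{S}}$ either to be a projective $t$-pencil through a fixed $t$-space $\tau$, or, when $n=2k+1$, to be the set of all $k$-spaces in a fixed $(2k-t)$-space $\Pi$. In the $(2k-t)$-space case, for $k>t$ the subspace $\Pi\cap H_\infty$ has dimension at least $2k-t-1\geq k$, so $\Pi$ contains $k$-spaces lying inside $H_\infty$; these cannot be closures, contradicting that $\tilde{\mathcal{S}}$ consists of \emph{all} $k$-spaces of $\Pi$ (the case $k=t$ being trivial, since then any two members of $\mathcal{S}$ coincide). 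In the pencil case I would argue $\tau\nsubseteq H_\infty$: otherwise, since $k\leq n-1$, there would be projective $k$-spaces through $\tau$ contained in the hyperplane $H_\infty$, forced into $\tilde{\mathcal{S}}$ yet not closures. With $\tau\nsubseteq H_\infty$, setting $\delta:=\tau\setminus H_\infty$ one checks that $\tilde\alpha\supseteq\tau$ is equivalent to $\delta\subseteq\alpha$, so $\mathcal{S}$ is exactly the affine $t$-pencil through $\delta$.

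The main obstacle I anticipate is not the transfer itself but the bookkeeping in the final step: one must verify that each degenerate extremal family allowed by the projective theorem necessarily contains a $k$-space lying inside $H_\infty$, hence cannot arise as a set of affine closures. This is precisely where the hypotheses $n\geq 2k+1$ and $k\geq t$ enter, through the dimension estimates $\dim(\Pi\cap H_\infty)\geq 2k-t-1$ and the existence of $H_\infty$-contained $k$-spaces through a $t$-space $\tau\subseteq H_\infty$.
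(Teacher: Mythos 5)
Your proposal is correct and follows essentially the same route as the paper: extend $\mathcal{S}$ to $\tilde{\mathcal{S}}$ in $\PG(n,q)$, apply Theorem \ref{q-analogue}(i) with its equality characterization, and rule out the extremal configurations that are not affine $t$-pencils. The only (harmless) difference is in the bookkeeping: the paper dismisses the $(2k-t)$-space case by counting the affine $k$-spaces it contains, whereas you observe that both degenerate cases would force $k$-spaces lying inside $H_\infty$ into $\tilde{\mathcal{S}}$, which also makes explicit the pencil-with-vertex-at-infinity case that the paper leaves implicit.
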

\begin{proof}
	Note first that the set of all affine $k$-spaces through a fixed affine $t$-space is a set of $k$-spaces pairwise intersecting in at least a $t$-space with size $\qbin{n-t}{k-t}$. Suppose now that there exists a $t$-intersecting set $\mathcal{S}$ of $k$-spaces in $AG(n,q)$ with at least $\qbin{n-t}{k-t}$ elements, which is not a $t$-pencil. Every affine element $\alpha$ in $\mathcal{S}$ can be extended to the corresponding projective $k$-space $\tilde{\alpha}$ in $\PG(n,q)$. Let $\tilde{\mathcal{S}}$ be the set of these extended $k$-spaces. Note that $\tilde{\mathcal{S}}$ is a $t$-intersecting set of $k$-spaces in $\PG(n,q)$ with $|\tilde{\mathcal{S}}|\geq\qbin{n-t}{k-t}$. It follows from Theorem \ref{q-analogue} that $n=2k+1$ and that all elements of $\tilde{\mathcal{S}}$ are contained in a projective $(2k-t)$-space. Since the number of affine spaces in a projective $(2k-t)$-space is $\qbin{2k-t+1}{k+1}-\qbin{2k-t}{k+1}$, we see that in this case $|\mathcal{S}|<\qbin{n-t}{k-t}$. Hence an affine $t$-pencil is the only example of a set of pairwise $t$-intersecting $k$-spaces in $\AG(n,q)$ with size at least $\qbin{n-t}{k-t}$.
\end{proof}

\appendix
\section{Appendix}
We start with some bounds on the binomial Gaussian coefficient that will be usefull to prove the inequalities used during the proofs in this article. For the proofs of these bounds we refer to \cite[Lemma 2.1, Lemma 2.2]{thesisferdinand}.

\begin{lemma}\label{bounds}
	Let $n\geq k \geq 0$.
	\begin{enumerate}
	\item Let $q\geq 3$. Then $\qbin{n}{k} \leq 2q^{k(n-k)}.$
	\item Let $q\geq 4$. Then $\qbin nk \leq \left( 1+\frac{2}{q}\right) q^{k(n-k)}.$
	\item Let $q\geq 2$ and $n\geq 1$. Then $\theta_n \leq \frac{q^{n+1}}{q-1}.$
	\end{enumerate}
Let $n>k>0$, then $\qbin{n}{k} \geq \left( 1+\frac{1}{q}\right) q^{k(n-k)}.$
\end{lemma}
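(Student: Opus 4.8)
The plan is to reduce all four inequalities to the single exact factorization
\begin{equation*}
\qbin{n}{k}_q = q^{k(n-k)}\prod_{j=1}^{k}\frac{1-q^{-(n-k+j)}}{1-q^{-j}},
\end{equation*}
obtained by pairing, for each $j\in\{1,\dots,k\}$, the numerator factor $q^{n-k+j}-1$ with the denominator factor $q^{j}-1$ and extracting $q^{n-k}$ from each of the $k$ resulting pairs. Because $n\geq k$ forces $n-k+j\geq j$, and $0<1-q^{-(n-k+j)}\leq 1$, every correction factor is squeezed as
\begin{equation*}
1 \leq \frac{1-q^{-(n-k+j)}}{1-q^{-j}} \leq \frac{1}{1-q^{-j}} .
\end{equation*}
All statements will then follow by bounding $\prod_{j=1}^{k}\frac{1-q^{-(n-k+j)}}{1-q^{-j}}$ between these two envelopes; the degenerate cases $k=0$ and $k=n$ make the product empty or equal to $1$ and are checked directly.

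For the upper bounds (items 1 and 2) I would discard the numerators and compare with the infinite product $\prod_{j\geq 1}(1-q^{-j})^{-1}$. The key estimate is the Weierstrass-type inequality $\prod_{j\geq 1}(1-a_j)\geq 1-\sum_{j\geq 1}a_j$, valid for $a_j\in[0,1)$ with $\sum a_j<1$ (an easy induction, passing to the limit). Applied to $a_j=q^{-j}$, where $\sum_{j\geq 1}q^{-j}=\tfrac{1}{q-1}$, it gives the clean universal bound
\begin{equation*}
\prod_{j=1}^{k}\frac{1}{1-q^{-j}} \leq \prod_{j=1}^{\infty}\frac{1}{1-q^{-j}} \leq \frac{1}{1-\frac{1}{q-1}} = \frac{q-1}{q-2}.
\end{equation*}
Items 1 and 2 then reduce to the elementary facts $\frac{q-1}{q-2}\leq 2$ (equivalent to $q\geq 3$) and $\frac{q-1}{q-2}\leq 1+\frac{2}{q}$ (which cross-multiplies to $q\geq 4$). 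This is exactly where the hypotheses on $q$ enter, and it simultaneously shows the two constants are sharp at $q=3$ and $q=4$.

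For the lower bound with $n>k>0$ I would instead keep only the $j=1$ correction factor and bound the other $k-1$ factors below by $1$, obtaining $\qbin{n}{k}_q\geq q^{k(n-k)}\cdot\frac{1-q^{-(n-k+1)}}{1-q^{-1}}$. It then suffices to verify $\frac{1-q^{-(n-k+1)}}{1-q^{-1}}\geq 1+\frac{1}{q}$, which rearranges to $1-q^{-(n-k+1)}\geq 1-q^{-2}$, i.e.\ $n-k\geq 1$; this holds precisely because $n>k$. Finally, item 3 is immediate, since $\theta_n=\frac{q^{n+1}-1}{q-1}<\frac{q^{n+1}}{q-1}$. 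The only genuine care-point in the argument is justifying the Weierstrass inequality together with convergence of the infinite product, and then confirming that the single closed-form bound $\frac{q-1}{q-2}$ is tight enough to meet both target constants; everything else is routine bookkeeping.
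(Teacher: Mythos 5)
Your argument is correct. Note that the paper does not actually prove this lemma: it defers entirely to \cite[Lemma 2.1, Lemma 2.2]{thesisferdinand}, so there is no internal proof to compare against, and your write-up serves as a valid self-contained substitute. The factorization $\qbin{n}{k}=q^{k(n-k)}\prod_{j=1}^{k}\frac{1-q^{-(n-k+j)}}{1-q^{-j}}$ is exact, the squeeze $1\leq\frac{1-q^{-(n-k+j)}}{1-q^{-j}}\leq\frac{1}{1-q^{-j}}$ is right for $n\geq k$, and the Weierstrass estimate $\prod_{j\geq 1}(1-q^{-j})\geq 1-\frac{1}{q-1}$ cleanly yields the universal factor $\frac{q-1}{q-2}$, which meets $2$ exactly at $q=3$ and $1+\frac{2}{q}$ exactly at $q=4$ --- so the hypotheses on $q$ enter precisely where they should. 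The lower bound via the single $j=1$ factor and the reduction to $n-k\geq 1$ is also correct, as is the trivial item~3. Two cosmetic caveats: when you say the constants are ``sharp at $q=3$ and $q=4$'' you are really saying your intermediate bound $\frac{q-1}{q-2}$ saturates the targets there (the lemma's inequalities themselves remain strict, since $\prod_{j\geq1}(1-q^{-j})^{-1}$ is strictly less than $\frac{q-1}{q-2}$); and item~3 needs no machinery at all since $\theta_n=\frac{q^{n+1}-1}{q-1}$ by definition. Neither affects validity.
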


\begin{lemma}\label{Lemma 47}
	For $n>2k-t$, $k>t$ and $q\geq 2$ it holds that
	\begin{align*}
		 2\qbin{n-t-1}{k-t-1}+ (\theta_{t+1}\theta_{k-t}-\theta_{t+1}-1) \theta_{k-t} \qbin{n-t-2}{k-t-2}\geq \qbin{n-t-1}{k-t-1}+\theta_{t+1} (\theta_{k-t}-1)\theta_{k-t} \qbin{n-t-2}{k-t-2}. 
	\end{align*}\end{lemma}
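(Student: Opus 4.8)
The plan is to exploit a large cancellation: after subtracting the right-hand side from the left-hand side of the claimed inequality, almost every term vanishes and the statement collapses to a single, easily verified comparison of two Gaussian coefficients.

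First I would abbreviate $A := \qbin{n-t-1}{k-t-1}$, $B := \qbin{n-t-2}{k-t-2}$, $\theta := \theta_{k-t}$ and $\Theta := \theta_{t+1}$, so that the asserted inequality reads
\[
2A + (\Theta\theta - \Theta - 1)\,\theta B \;\geq\; A + \Theta(\theta-1)\,\theta B.
\]
Subtracting the right-hand side from the left-hand side, the coefficient of $\theta B$ is $(\Theta\theta - \Theta - 1) - (\Theta\theta - \Theta) = -1$, and the coefficient of $A$ is $2-1=1$, so the whole inequality is equivalent to
\[
A - \theta B \;\geq\; 0,\qquad\text{i.e.}\qquad \qbin{n-t-1}{k-t-1} \;\geq\; \theta_{k-t}\,\qbin{n-t-2}{k-t-2}.
\]

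Next I would dispatch this reduced inequality. When $k=t+1$ the term $\qbin{n-t-2}{k-t-2}$ is $0$ while the left-hand side is $1$, so the inequality is immediate; hence I may assume $k-t\geq 2$. Applying the recurrence $\qbin{m}{r} = \tfrac{q^m-1}{q^r-1}\qbin{m-1}{r-1}$ with $m=n-t-1$ and $r=k-t-1$, and writing out $\theta_{k-t}=\tfrac{q^{k-t+1}-1}{q-1}$, the inequality becomes
\[
\frac{q^{n-t-1}-1}{q^{k-t-1}-1} \;\geq\; \frac{q^{k-t+1}-1}{q-1}.
\]
Since $n>2k-t$ gives $n-t-1\geq 2(k-t)$, I have $q^{n-t-1}-1\geq q^{2(k-t)}-1$, so it suffices to verify $\tfrac{q^{2(k-t)}-1}{q^{k-t-1}-1}\geq \tfrac{q^{k-t+1}-1}{q-1}$.

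Finally, setting $s:=k-t\geq 2$ and cross-multiplying (both denominators are positive), this last comparison is equivalent to $(q^{2s}-1)(q-1)\geq (q^{s+1}-1)(q^{s-1}-1)$, and expanding both sides reduces it to $q^{2s}(q-2)+q^{s+1}+q^{s-1}-q\geq 0$, which holds for every $q\geq 2$ and $s\geq 1$ because the first summand is nonnegative and $q^{s+1}>q$. There is no genuine obstacle in this argument: the only step requiring insight is spotting the cancellation that turns the two-parameter inequality into $\qbin{n-t-1}{k-t-1}\geq\theta_{k-t}\qbin{n-t-2}{k-t-2}$; after that it is an elementary polynomial estimate driven by the hypothesis $n>2k-t$.
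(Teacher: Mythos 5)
Your proposal is correct and follows essentially the same route as the paper: the inequality cancels down to $\qbin{n-t-1}{k-t-1}\geq\theta_{k-t}\qbin{n-t-2}{k-t-2}$, which is then verified by the ratio formula and an elementary polynomial comparison using $n-t-1\geq 2(k-t)$. Your separate treatment of the degenerate case $k=t+1$ (where the paper silently divides by $q^{k-t-1}-1=0$) is a small but welcome refinement of the same argument.
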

	\begin{proof}
	The inequality is equivalent to
	\begin{align*}
		& \qbin{n-t-1}{k-t-1}+ (\theta_{t+1}\theta_{k-t}-\theta_{t+1}-1) \theta_{k-t} \qbin{n-t-2}{k-t-2}\geq \theta_{t+1} (\theta_{k-t}-1)\theta_{k-t} \qbin{n-t-2}{k-t-2}\\
		\Leftrightarrow & \  \qbin{n-t-1}{k-t-1} \geq \theta_{k-t}\qbin{n-t-2}{k-t-2} \\
		\Leftrightarrow & \ \frac{q^{n-t-1}-1}{q^{k-t-1}-1}\geq \frac{q^{k-t+1}-1}{q-1}\\
		\Leftrightarrow & \ q^{n-t}-q^{n-t-1}-q+1\geq q^{2k-2t}-q^{k-t+1}-q^{k-t-1}+1\\
		\Leftrightarrow & \ \left(q^{n-t}-q^{n-t-1}-q^{2k-2t}\right)+\left(q^{k-t+1}-q\right)+q^{k-t-1} \geq 0
	\end{align*}
	The last inequality holds since all terms in the left hand side of the last inequality are non negative for $n>2k-t$, $k>t$ and $q\geq 2$.
\end{proof}

\begin{lemma}\label{Lemma 47B}
For $n\geq 2k-t, k>t$ and $q\geq 2$ it holds that
\begin{align*}
2\qbin{n-t-1}{k-t-1}+ (\theta_{t+1}\theta_{k-t}-\theta_{t+1}-\theta_{k-t}) \theta_{k-t} \qbin{n-t-2}{k-t-2}\geq \qbin{n-t-1}{k-t-1}+q\theta_{t} (\theta_{k-t}-1)\theta_{k-t} \qbin{n-t-2}{k-t-2}.
\end{align*}
  \end{lemma}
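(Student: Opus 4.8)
The plan is to reduce this to the core estimate already proved in Lemma \ref{Lemma 47}, since after cancellation the two inequalities turn out to be identical. First I would subtract $\qbin{n-t-1}{k-t-1}$ from both sides, so that the claim becomes
\[
\qbin{n-t-1}{k-t-1}+\bigl(\theta_{t+1}\theta_{k-t}-\theta_{t+1}-\theta_{k-t}\bigr)\theta_{k-t}\qbin{n-t-2}{k-t-2}\geq q\theta_{t}(\theta_{k-t}-1)\theta_{k-t}\qbin{n-t-2}{k-t-2}.
\]
Everything now hinges on comparing the two coefficients of $\theta_{k-t}\qbin{n-t-2}{k-t-2}$.

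The decisive step is the identity $q\theta_t=\theta_{t+1}-1$ (equivalently $\theta_{t+1}=1+q\theta_t$), which is immediate from $\theta_{t+1}=1+q+\dots+q^{t+1}$. Using it, the right-hand coefficient factors as $q\theta_t(\theta_{k-t}-1)=(\theta_{t+1}-1)(\theta_{k-t}-1)=\theta_{t+1}\theta_{k-t}-\theta_{t+1}-\theta_{k-t}+1$, which exceeds the left-hand coefficient $\theta_{t+1}\theta_{k-t}-\theta_{t+1}-\theta_{k-t}$ by exactly $1$. Hence the whole inequality collapses to $\qbin{n-t-1}{k-t-1}\geq\theta_{k-t}\qbin{n-t-2}{k-t-2}$.

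This reduced inequality is precisely the estimate proved inside Lemma \ref{Lemma 47}: writing $\qbin{n-t-1}{k-t-1}=\frac{q^{n-t-1}-1}{q^{k-t-1}-1}\qbin{n-t-2}{k-t-2}$ recasts it as $\frac{q^{n-t-1}-1}{q^{k-t-1}-1}\geq\frac{q^{k-t+1}-1}{q-1}$, which after cross-multiplication reads $\bigl(q^{n-t}-q^{n-t-1}-q^{2k-2t}\bigr)+\bigl(q^{k-t+1}-q\bigr)+q^{k-t-1}\geq 0$, a sum of non-negative terms in the relevant range. So I would simply invoke that computation to close the argument. The only genuine obstacle is spotting the coefficient collapse through $q\theta_t=\theta_{t+1}-1$; once this is observed, no new estimate is needed. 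The one point I would check carefully is the extreme $n=2k-t$ for small $q$, since the term grouping above keeps $q^{n-t}-q^{n-t-1}-q^{2k-2t}$ non-negative only when $n>2k-t$ — which is exactly the regime in which the lemma is applied in Lemma \ref{lemma1ellisaffien}.
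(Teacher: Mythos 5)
Your proof is correct and follows essentially the same route as the paper: cancel $\qbin{n-t-1}{k-t-1}$, use $q\theta_t=\theta_{t+1}-1$ to see the coefficients of $\theta_{k-t}\qbin{n-t-2}{k-t-2}$ differ by exactly $1$, and reduce to $\qbin{n-t-1}{k-t-1}\geq\theta_{k-t}\qbin{n-t-2}{k-t-2}$, verified by the same cross-multiplied term grouping. Your closing caveat is well taken — the paper's own final line also only justifies non-negativity of $q^{n-t}-q^{n-t-1}-q^{2k-2t}$ for $n>2k-t$ despite the lemma being stated with $n\geq 2k-t$, and $n>2k-t$ is indeed the regime in which the lemma is invoked.
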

\begin{proof}
	The inequality is equivalent to
	\begin{align*}
	& \qbin{n-t-1}{k-t-1}+ (\theta_{t+1}\theta_{k-t}-\theta_{t+1}-\theta_{k-t}) \theta_{k-t} \qbin{n-t-2}{k-t-2}\geq q\theta_{t} (\theta_{k-t}-1)\theta_{k-t} \qbin{n-t-2}{k-t-2}\\
	\Leftrightarrow & \  \qbin{n-t-1}{k-t-1} \geq \theta_{k-t}\qbin{n-t-2}{k-t-2} \\
	\Leftrightarrow & \ \frac{q^{n-t-1}-1}{q^{k-t-1}-1}\geq \frac{q^{k-t+1}-1}{q-1}\\
	\Leftrightarrow & \ q^{n-t}-q^{n-t-1}-q+1\geq q^{2k-2t}-q^{k-t+1}-q^{k-t-1}+1\\
	\Leftrightarrow & \ \left(q^{n-t}-q^{n-t-1}-q^{2k-2t}\right)+\left(q^{k-t+1}-q\right)+q^{k-t-1} \geq 0
	\end{align*}
	The last inequality holds since all terms in the left hand side of the last inequality are non negative for $n>2k-t$, $k>t$ and $q\geq 2$.
\end{proof}

\begin{lemma}\label{appendixprojverschil1}
	Let $n>2k-t, q\geq 3$ and consider Example \ref{examplep} and Example \ref{example2p} in $\PG(n,q)$. The number of elements in Example \ref{examplep} is larger than the number of elements in Example \ref{example2p}  if $k>2t+2$.
\end{lemma}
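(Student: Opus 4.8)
The plan is to compare the two cardinalities as polynomials in $q$ and to isolate, on each side, a single term that already controls the comparison; these terms will differ by a factor of order $q^{\,k-2t-3}$, which is precisely what produces the threshold $k>2t+2$. Write $A$ for the number of elements of Example \ref{examplep} and $B$ for that of Example \ref{example2p}. For $A$ I would use the summation form \eqref{Sproj1somnotatie}, in which every summand is positive; in particular $A$ strictly exceeds its $j=0$ summand
\[ T_0=\theta_{k-t}\,q^{(k-t)(k-t-1)}\qbin{n-k-1}{k-t-1}, \]
because the term $\theta_{k+1}$ and all other summands are positive. For $B$ I would discard the negative contribution: since $\theta_{t+2}\geq 1$,
\[ B=\qbin{n-t-2}{k-t-2}+\theta_{t+2}\Big(\qbin{n-t-1}{k-t-1}-\qbin{n-t-2}{k-t-2}\Big)\leq \theta_{t+2}\qbin{n-t-1}{k-t-1}. \]
It then suffices to prove the single clean inequality $T_0\geq \theta_{t+2}\qbin{n-t-1}{k-t-1}$, since that gives $A>T_0\geq B$.

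To establish this I would estimate both sides with Lemma \ref{bounds}. On the large side, the bounds $\theta_{k-t}\geq q^{k-t}$ and $\qbin{n-k-1}{k-t-1}\geq(1+\tfrac1q)q^{(k-t-1)(n-2k+t)}$ (valid as $n-k-1>k-t-1>0$, which holds since $n>2k-t$ and $k>t+1$) yield $T_0\geq(1+\tfrac1q)q^{(k-t)^2+(k-t-1)(n-2k+t)}$. On the small side, $\theta_{t+2}\leq\frac{q^{t+3}}{q-1}$ together with $\qbin{n-t-1}{k-t-1}\leq 2q^{(k-t-1)(n-k)}$ (Lemma \ref{bounds}(1), $q\geq 3$) give the upper bound $\frac{2q^{t+3}}{q-1}\,q^{(k-t-1)(n-k)}$. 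Dividing, the bulky Gaussian exponents cancel: the exponent of $q$ in $T_0$ minus that in the bound for $B$ equals $(k-t)-(t+3)=k-2t-3$, so that
\[ \frac{T_0}{B}\ \geq\ \frac{(1+\tfrac1q)(q-1)}{2}\,q^{\,k-2t-3}\ =\ \frac{q^{2}-1}{2q}\,q^{\,k-2t-3}. \]

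Finally I would check that the right-hand side exceeds $1$. The hypothesis $k>2t+2$ forces $k-2t-3\geq 0$, hence $q^{\,k-2t-3}\geq 1$, while $\frac{q^{2}-1}{2q}\geq\frac{4}{3}>1$ for every $q\geq 3$; thus $T_0>B$ and therefore $A>T_0>B$. The main obstacle is not the final constant, which leaves comfortable slack, but choosing the right single term $T_0$: one must confirm that one summand of \eqref{Sproj1somnotatie} already dominates $B$, so that discarding the rest is harmless. This is the natural choice because $T_0$ is the summand of maximal $q$-degree — the degree of the $j$-th summand is a downward parabola in the running index whose vertex lies beyond the summation range once $n>2k-t$ — although for the argument above only the positivity of the discarded terms is actually used.
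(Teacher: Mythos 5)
Your proof is correct, and it shares the paper's key idea — retain only the $j=0$ summand $T_0=\theta_{k-t}q^{(k-t)(k-t-1)}\qbin{n-k-1}{k-t-1}$ of the expansion \eqref{Sproj1somnotatie} and estimate via Lemma \ref{bounds} — but the execution is genuinely different and noticeably cleaner. The paper argues by contradiction, keeps the full expression $\qbin{n-t-2}{k-t-2}\bigl(1+\theta_{t+2}q^{k-t-1}\tfrac{q^{n-k}-1}{q^{k-t-1}-1}\bigr)$ for Example \ref{example2p}, clears denominators, and then expands everything into a long signed sum of monomials that must be checked term by term. Your extra preliminary step, bounding Example \ref{example2p} above by the single product $\theta_{t+2}\qbin{n-t-1}{k-t-1}$, collapses all of that: after Lemma \ref{bounds} the Gaussian-coefficient exponents cancel exactly and the comparison reduces to $\tfrac{q^2-1}{2q}\,q^{k-2t-3}>1$, which is immediate from $k\geq 2t+3$ and $q\geq 3$. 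I checked the details: $\theta_{k-t}\geq q^{k-t}$ and $\qbin{n-k-1}{k-t-1}\geq(1+\tfrac1q)q^{(k-t-1)(n-2k+t)}$ are legitimate (the latter needs $n-k-1>k-t-1>0$, which follows from $n>2k-t$ and $k>2t+2\geq t+2$), the exponent difference is indeed $(k-t)^2+(k-t-1)(n-2k+t)-(t+3)-(k-t-1)(n-k)=k-2t-3$, and strictness of the final inequality is guaranteed twice over (by the discarded $\theta_{k+1}$ and by the factor $\tfrac{q^2-1}{2q}\geq\tfrac43$). What your route buys is transparency about where the threshold $k>2t+2$ comes from — it is exactly the exponent gap $k-2t-3\geq 0$ — at the cost of a slightly lossier bound on Example \ref{example2p}, which here is harmless since the slack is a full factor of $\tfrac43$.
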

\begin{proof}
	Let $S_{2.1}$ and $S_{2.3}$ be the set of elements in Example \ref{examplep} and in Example \ref{example2p} respectively.
	Suppose that $k>2t+2$. Then we have to prove that $|S_{2.3}|<|S_{2.1}|$. Suppose to the contrary that $|S_{2.3}|\geq |S_{2.1}|$. Then 
	\begin{align*}
		&\qbin{n-t-2}{k-t-2} \left( 1+ \theta_{t+2}q^{k-t-1}\frac{q^{n-k}-1}{q^{k-t-1}-1}  \right) \geq \theta_{k+1}+\sum_{j=0}^{k-t-2} \qbin{k-t+1}{j+1}q^{(k-t-j)(k-t-j-1)}\qbin{n-k-1}{k-t-j-1}\\
		&\xRightarrow{j=0} \qbin{n-t-2}{k-t-2} \left( 1+ \theta_{t+2}q^{k-t-1}\frac{q^{n-k}-1}{q^{k-t-1}-1}  \right) > \theta_{k-t}q^{(k-t)(k-t-1)}\qbin{n-k-1}{k-t-1}\\
		&\xRightarrow{L. \ref{bounds}} 2q^{(k-t-2)(n-k)}\left( 1+ \theta_{t+2}q^{k-t-1}\frac{q^{n-k}-1}{q^{k-t-1}-1}  \right) > \left(1+\frac{1}{q}\right)^2 q^{k-t+(k-t)(k-t-1)+(k-t-1)(n-2k+t)}\\
		&\Rightarrow 2+ 2 \theta_{t+2} q^{k-t-1}\frac{q^{n-k}-1}{q^{k-t-1}-1}>\left(1+\frac{1}{q}\right)^2 q^{n-t}\\
		& \Rightarrow 2(q-1)(q^{k-t-1}-1)+ 2(q^{t+3}-1)q^{k-t-1}(q^{n-k}-1)>\left(1+\frac{1}{q}\right)^2 (q-1)(q^{k-t-1}-1)q^{n-t}\\
		&\Rightarrow 2q^{k-t}-2q^{k-t-1}-2q+2+2q^{n+2}-2q^{n-t-1}-2q^{k+2}+2q^{k-t-1}\\ & \qquad>\left(1+\frac{1}{q}\right)^2(q^{n+k-2t}-q^{n+k-2t-1}-q^{n-t+1}+q^{n-t})\\
		&\qquad\geq q^{n+k-2t}+q^{n+k-2t-1}-q^{n+k-2t-2}-q^{n+k-2t-3}-q^{n-t+1}-q^{n-t}+q^{n-t-1}+q^{n-t-2}\\
		&\Rightarrow \left(  2q^{n+2}+ q^{n-t+1} - q^{n+k-2t} \right)  +\left( q^{n+k-2t-2}+q^{n+k-2t-3}+q^{n-t}-q^{n+k-2t-1}  \right)\\
		&\qquad +\left( 2q^{k-t}+2-2q^{k+2}  \right)+(-2q-3q^{n-t-1}-q^{n-t-2}) >0
					\end{align*}
					In the left hand side of the last inequality all terms are at most zero for $k>2t+2$ and $q\geq 3$. Hence we find a contradiction which proves the statement.
\end{proof}

\begin{lemma}\label{appendixprojverschil2}
	Let $n>2k-t, k>t+1, q\geq 3$ and consider Example \ref{examplep} and Example \ref{example2p} in $\PG(n,q)$. The number of elements in Example \ref{example2p} is larger than the number of elements in Example \ref{examplep}  if $k<2t+2$.
\end{lemma}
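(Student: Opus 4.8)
The plan is to mirror the proof of Lemma \ref{appendixprojverschil1}, but run in the opposite direction: writing $S_{2.1}$ and $S_{2.3}$ for the sets of Example \ref{examplep} and Example \ref{example2p}, I would show directly that $|S_{2.3}|>|S_{2.1}|$ whenever $k<2t+2$ (equivalently $k\leq 2t+1$). The guiding observation is that, after extracting the leading term in $n$, both cardinalities are governed by the same power $q^{(k-t-1)(n-k)}$, so the comparison reduces to their constant prefactors; these differ precisely by the quantity recording $k-t$ against $t+2$. Thus the whole inequality should come down to the elementary fact that $\theta_{t+2}/\theta_{k-t}>q$, which holds exactly when $k\leq 2t+1$.

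For the lower bound on $|S_{2.3}|$ I would discard the non-negative summand $\qbin{n-t-2}{k-t-2}$ and use $\qbin{n-t-1}{k-t-1}-\qbin{n-t-2}{k-t-2}=q^{k-t-1}\qbin{n-t-2}{k-t-1}$, giving $|S_{2.3}|\geq L:=\theta_{t+2}\,q^{k-t-1}\qbin{n-t-2}{k-t-1}$. For the upper bound on $|S_{2.1}|$ it is essential \emph{not} to use the closed form $\theta_{k+1}-\theta_{k-t}+\qbin{n-t}{k-t}-q^{(k-t+1)(k-t)}\qbin{n-k-1}{k-t}$, whose two leading terms nearly cancel, but rather the all-positive sum expression \eqref{Sproj1somnotatie}. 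There the dominant contribution is the $j=0$ term $T_0=\theta_{k-t}\,q^{(k-t)(k-t-1)}\qbin{n-k-1}{k-t-1}$, and a direct computation of consecutive ratios gives $T_{j+1}/T_j\leq 4q^{2k-t-2-n}\leq 4q^{-3}<1$ for $n>2k-t$ and $q\geq 3$. Summing the resulting geometric series yields $\sum_j T_j\leq \tfrac{6}{5}T_0$.

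The heart of the argument is the single comparison $L>qT_0$. Computing the ratio,
\[
\frac{L}{T_0}=\frac{\theta_{t+2}}{\theta_{k-t}}\cdot q^{-(k-t-1)^2}\cdot\frac{\qbin{n-t-2}{k-t-1}}{\qbin{n-k-1}{k-t-1}},
\]
and since each of the $k-t-1$ factors of the last quotient has numerator and denominator exponents differing by $k-t-1$, that quotient exceeds $q^{(k-t-1)^2}$ and cancels the middle power exactly. Hence $L/T_0>\theta_{t+2}/\theta_{k-t}$, and this ratio exceeds $q$ precisely because $k\leq 2t+1$ forces $k-t+1\leq t+2$, so that $\theta_{t+2}/\theta_{k-t}=\tfrac{q^{t+3}-1}{q^{k-t+1}-1}\geq \tfrac{q^{t+3}-1}{q^{t+2}-1}>q$. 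This is the one and only point where the hypothesis $k<2t+2$ enters.

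To finish, I would assemble the pieces: $\sum_j T_j\leq \tfrac65 T_0<\tfrac{6}{5q}L\leq \tfrac25 L$, while $\theta_{k+1}$ is negligible against $L$, since $L>q^{k+1+(k-t)(k-t-1)}$ (using $\theta_{t+2}>q^{t+2}$ and $\qbin{n-t-2}{k-t-1}>q^{(k-t-1)(n-k-1)}\geq q^{(k-t-1)(k-t)}$) whereas $\theta_{k+1}<q^{k+2}$, and $(k-t)(k-t-1)\geq 2$ gives $\theta_{k+1}<\tfrac13 L$. Adding up, $|S_{2.1}|=\theta_{k+1}+\sum_j T_j<\tfrac13 L+\tfrac25 L<L\leq |S_{2.3}|$, as required. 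I expect the main obstacle to be bookkeeping rather than ideas: the constants must be carried honestly so the estimate survives at the worst corner $q=3$, $k=2t+1$, $n=2k-t+1$, where the two leading exponents differ by only one; this is exactly the regime in which using the cancellation-free form \eqref{Sproj1somnotatie}, together with the crude but uniform geometric tail bound, becomes indispensable.
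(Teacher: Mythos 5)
Your proposal is correct, and the estimates survive scrutiny: the identity $\qbin{n-t-1}{k-t-1}-\qbin{n-t-2}{k-t-2}=q^{k-t-1}\qbin{n-t-2}{k-t-1}$ is the standard Pascal relation, the ratio $T_{j+1}/T_j$ is indeed $O(q^{2k-t-n-2-2j})$ so the geometric tail bound holds with room to spare for $n>2k-t$ and $q\geq 3$, each factor of $\qbin{n-t-2}{k-t-1}/\qbin{n-k-1}{k-t-1}$ exceeds $q^{k-t-1}$ so the middle power cancels as you claim, and $\theta_{t+2}/\theta_{k-t}>q$ is exactly where $k\leq 2t+1$ enters. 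The route is genuinely different from the paper's. The paper argues by contradiction: it replaces $|S_{2.1}|$ by the cruder upper bound $q^{k-t+1}\theta_t+\theta_{k-t}\qbin{n-t-1}{k-t-1}$ (obtained by recounting Example \ref{examplep} geometrically as the spaces inside $\langle\pi,\delta\rangle$ not containing $\delta$ plus the spaces through some $(t+1)$-space on $\delta$ in $\langle\pi,\delta\rangle$), then feeds both closed forms through Lemma \ref{bounds} and expands into one long polynomial inequality whose terms are grouped so that each group has a definite sign, with a separate sub-argument for the positivity of the leading group. Your version works directly with the cancellation-free sum \eqref{Sproj1somnotatie}, isolates the dominant term $T_0$, and reduces everything to the single transparent comparison $L/T_0>\theta_{t+2}/\theta_{k-t}>q$; this makes the role of the hypothesis $k<2t+2$ visible in one line, whereas in the paper's computation it is buried in the sign-checking of several grouped terms. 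The price you pay is the bookkeeping of explicit constants ($\tfrac65$, $\tfrac13$, $\tfrac25$), but as you note these are uniform in $n$ and hold at the worst corner $q=3$, $n=2k-t+1$, so nothing is lost. Either proof is acceptable; yours is arguably the more structured of the two.
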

\begin{proof}
	Let $S_{2.1}$ and $S_{2.3}$ be the set of elements in Example \ref{examplep} and in Example \ref{example2p} respectively.
	Suppose that $k<2t+2$. Then we have to prove that $|S_{2.3}|>|S_{2.1}|$. Suppose to the contrary that $|S_{2.3}|\leq |S_{2.1}|$. Then 
	\begin{align*}
		\qbin{n-t-2}{k-t-2} \left( 1+ \theta_{t+2}q^{k-t-1}\frac{q^{n-k}-1}{q^{k-t-1}-1}  \right) &\leq \theta_{k+1}+\sum_{j=0}^{k-t-2} \qbin{k-t+1}{j+1}q^{(k-t-j)(k-t-j-1)}\qbin{n-k-1}{k-t-j-1}\\ & <q^{k-t+1}\theta_t+ \theta_{k-t}\qbin{n-t-1}{k-t-1}\\
	\end{align*}
	The last inequality follows since $q^{k-t+1}\theta_t$ is the number of elements of $S_{2.1}$ contained in $\langle \pi, \delta \rangle$ but not containing $\delta$. The second term $\theta_{k-t}\qbin{n-t-1}{k-t-1}$ is the number of $k$-spaces through a $(t+1)$-space in $\langle \pi, \delta \rangle$ through $\delta$.
	\begin{align*}
		&\xRightarrow{L. \ref{bounds}} \left(1+\frac 1q\right)q^{(n-k)(k-t-2)}\left( \theta_{t+2}q^{k-t-1}\frac{q^{n-k}-1}{q^{k-t-1}-1}  \right) < q^{k-t+1} \theta_t+2\theta_{k-t} q^{(n-k)(k-t-1)}\\
		& \Rightarrow \left(1+\frac 1q \right) (q^{t+3}-1)(q^{n-k}-1) q^{k-t-1} < \frac{ (q^{t+1}-1)(q^{k-t-1}-1)}{q^{(n-k)(k-t-2)-k+t-1}}+2(q^{k-t+1}-1)(q^{k-t-1}-1)q^{n-k}\\
		&\Rightarrow q^{n+2}+q^{k-t-1}+q^{n+1}+q^{k-t-2}-q^{n-t-1}-q^{k+2}-q^{n-t-2}-q^{k+1}\\ & \qquad \qquad <2q^{n+k-2t}+2q^{n-k}-2q^{n-t-1}-2q^{n-t+1}+{q^{k-t+1-(n-k)(k-t-2)} (q^{t+1}-1)(q^{k-t-1}-1)}\\
		& \Rightarrow\left( q^{n+2}-2q^{n+k-2t}-{q^{k-t+1-(n-k)(k-t-2)} (q^{t+1}-1)(q^{k-t-1}-1)}\right) +\left( q^{n+1}-q^{k+1}-q^{k+2} \right) \\ &\qquad \qquad + \left(  2q^{n-t-1}-q^{n-t-1}-q^{n-t-2}\right) +\left(2q^{n-t+1}+q^{k-t-1}-2q^{n-k} \right)+q^{k-t-2}<0
	\end{align*}
	Now, the contradiction follows since all terms in the left hand side of the last inequality are positive. For the last four terms this follows immediately since $n>2k-t,k< 2t+2, k>t+1, q\geq 3$.  We end this proof by proving that the first term is also positive. Since $k\geq t+2$ and $n>2k-t=k+(k-t)\geq k+1$ we have that
	\begin{align*}
	&0<(n-k-1)(k-t-1)\\
	&\Leftrightarrow  n+2 > 2k-t+1-(n-k)(k-t-2)\\
	&\Rightarrow	q^{n+2}>q^{2k-t+1-(n-k)(k-t-2)}>q^{k-t+1-(n-k)(k-t-2)} (q^{t+1}-1)(q^{k-t-1}-1).
	\end{align*}
\end{proof}

\begin{lemma}\label{appendixprojverschil3}
	Let $n>2k-t, q\geq 3$, and consider Example \ref{examplep} and Example \ref{example2p} in $\PG(n,q)$. The number of elements in Example \ref{example2p} is larger than the number of elements in Example \ref{examplep} if $k=2t+2$.
\end{lemma}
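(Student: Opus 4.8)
The plan is to substitute $k=2t+2$ at the outset, so that $k-t=t+2$, $k-t-1=t+1$ and $k-t-2=t$, and then compare the two cardinalities directly. Writing $S_{2.1}$ and $S_{2.3}$ for the sets of Example \ref{examplep} and Example \ref{example2p}, this specialisation gives
\[|S_{2.3}|=\qbin{n-t-2}{t}+\theta_{t+2}\left(\qbin{n-t-1}{t+1}-\qbin{n-t-2}{t}\right),\]
\[|S_{2.1}|=\theta_{2t+3}-\theta_{t+2}+\qbin{n-t}{t+2}-q^{(t+3)(t+2)}\qbin{n-2t-3}{t+2}.\]
First I would prove $|S_{2.3}|>|S_{2.1}|$, which is exactly the boundary value left open between Lemma \ref{appendixprojverschil1} ($k>2t+2$) and Lemma \ref{appendixprojverschil2} ($k<2t+2$).

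The decisive structural observation is that $k=2t+2$ is precisely the value at which the two examples have the same leading order. One checks that the degree-$(t+2)$ part of $|S_{2.1}|$ (coming from $\qbin{n-t}{t+2}$ and $q^{(t+3)(t+2)}\qbin{n-2t-3}{t+2}$) cancels internally, so that $|S_{2.1}|$ has degree $t+1$ in $q^{n}$, just like $|S_{2.3}|$; moreover the coefficients of $q^{(t+1)n}$ in $|S_{2.1}|$ and $|S_{2.3}|$ are \emph{equal}. Consequently the crude factor-$2$ estimates $\qbin{a}{b}\le 2q^{b(a-b)}$ used in the two previous lemmas are too lossy here: they leave a spurious negative leading term. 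Indeed for $t=0$ the two sizes turn out to be literally equal, so the strict inequality can hold only for $t\ge 1$ (which is in force throughout this section) and must emerge from a genuinely lower-order term.

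Accordingly, I would reduce every Gaussian coefficient to a common one by means of the identity $\qbin{N}{m}=\frac{q^{N}-1}{q^{m}-1}\qbin{N-1}{m-1}$. Applied to $\qbin{n-t}{t+2}$ and $\qbin{n-t-1}{t+1}$, this rewrites $\theta_{t+2}\qbin{n-t-1}{t+1}-\qbin{n-t}{t+2}$ as $\qbin{n-t-1}{t+1}$ times an explicit rational factor in $q$, and likewise exhibits the internal cancellation of the degree-$(t+2)$ part of $|S_{2.1}|$; a further application collapses everything onto the single coefficient $\qbin{n-t-2}{t}$. After these cancellations $|S_{2.3}|-|S_{2.1}|$ is a fixed Gaussian coefficient of degree $t$ in $q^{n}$ multiplied by a Laurent polynomial in $q$, together with the small corrections $\theta_{2t+3}$ and $\theta_{t+2}$. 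Only at this stage, on the genuinely lower-order remainder, would I invoke Lemma \ref{bounds} and clear denominators to obtain a polynomial inequality in $q$, which I would then group into sign-definite brackets and verify for $q\ge 3$, $t\ge 1$ and $n>2k-t=3t+4$, exactly as in the sign-checks closing Lemmas \ref{appendixprojverschil1} and \ref{appendixprojverschil2}.

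The main obstacle is precisely this bookkeeping. Because the top-order contributions match identically, the argument must carry the \emph{exact} leading coefficients through the cancellation rather than the factor-$2$ bounds, and may only estimate what survives; the positive gap that ultimately forces $|S_{2.3}|>|S_{2.1}|$ lives one order below the leading term in $q^{n}$, so the whole difficulty is to arrange the cancellations cleanly enough that the crude estimates are applied solely to this sub-leading remainder and its dominant surviving term is seen to be positive.
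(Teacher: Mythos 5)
Your setup and your diagnosis are both correct: with $k=2t+2$ the two closed forms specialise exactly as you write them, the degree-$(t+2)$ contributions $\qbin{n-t}{t+2}$ and $q^{(t+3)(t+2)}\qbin{n-2t-3}{t+2}$ cancel at leading order, and the surviving leading coefficients of the two counts agree (your remark that the sizes coincide for $t=0$ checks out, e.g.\ both equal $508$ for $q=3$, $n=5$), so the factor-$2$ estimates of Lemmas \ref{appendixprojverschil1} and \ref{appendixprojverschil2} cannot close this case. But everything after the diagnosis is a plan, not a proof: the entire content of the lemma is the positivity of what survives the cancellation, and you never exhibit that remainder, let alone verify its sign. ``Reduce to a common Gaussian coefficient, then group into sign-definite brackets'' is precisely the step that is hard here, and there is concrete reason to doubt it goes through uniformly: the inequality is extremely tight near the boundary $n=2k-t+1$. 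In the paper's own decomposition the constant part $w_1=\qbin{n-t-2}{t}+\theta_{t+2}-\theta_{2t+3}$ is in fact \emph{negative} for $(n,t)=(8,1)$ (it equals $\theta_3-q^5$), and that case has to be settled by explicit computation; any scheme that lower-bounds the sub-leading remainder must survive such boundary cases, and your proposal does not anticipate them.

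The paper avoids a single global remainder estimate altogether. It re-expands $|S_{2.3}|$ combinatorially as a sum over the dimension $i$ of intersection with an auxiliary $t$-space $\tau$ disjoint from $\Gamma$ (via Lemma \ref{lemmadisjunct}), aligns this index-by-index with the expansion (\ref{Sproj1somnotatie}) of $|S_{2.1}|$ over the intersection dimension with $\sigma_0$, and writes $|S_{2.3}|-|S_{2.1}|=w_1+\sum_j(\cdot)\,w_2$, where each $w_2$ is an explicit rational function whose numerator is shown to be nonnegative by sign-grouping, and $w_1\ge 0$ except in the one small case above. If you want to salvage your algebraic route you would need to carry the exact identity $\qbin{n-t}{t+2}=\frac{q^{n-t}-1}{q^{t+2}-1}\qbin{n-t-1}{t+1}$ and its iterates all the way through, write out the resulting Laurent polynomial explicitly, and verify it for all $n\ge 3t+5$, treating $n=3t+5$ and $n=3t+6$ separately --- at which point you have essentially reconstructed the paper's term-by-term computation. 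As it stands, the decisive positivity claim is asserted but not established.
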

\begin{proof}
	Let $S_{2.1}$ and $S_{2.3}$ be the set of elements in Example \ref{examplep} and in Example \ref{example2p} respectively.
	Suppose that $k= 2t+2$, then we have to prove that $|S_{2.3}|\geq |S_{2.1}|$.
	Consider the set $S_{2.3}$, and let $\tau$ be a $t$-space in $\PG(n,q)$, disjoint from $\Gamma$. Then we find that $|S_{2.3}|=\qbin{n-t-2}{t}+\sum_{i=-1}^{t} R_i(n, 2t+2,t)$, with $R_i(n, k,t)=\{ \alpha\in S_{2.3}| \dim(\alpha\cap \Gamma)=t+1, \dim(\alpha\cap \tau)=i  \}$. Since there are $\theta_{t+2}$ $(t+1)$-spaces in $\Gamma$, and $\qbin{t+1}{i+1}$ $i$-spaces in $\tau$ we have, by using Lemma \ref{lemmadisjunct}, that
	\begin{align}
	|S_{2.3}|&=\qbin{n-t-2}{t}+\sum_{i=-1}^{t} \theta_{t+2} \qbin{t+1}{i+1}\left(q^{(t-i)^2}\qbin{n-2t-2}{t-i}-q^{(t-i)(t-i-1)}\qbin{n-2t-3}{t-i-1}\right)\nonumber \\
	&=\qbin{n-t-2}{t}+\sum_{i=-1}^{t-1} \theta_{t+2} \qbin{t+1}{i+1}q^{(t-i)(t-i-1)}\qbin{n-2t-3}{t-i-1}\frac{q^{n-t-i-2}-2q^{t-i}+1}{q^{t-i}-1}+\theta_{t+2}\nonumber \\
	&=\qbin{n-t-2}{t}+\sum_{j=0}^{t} \theta_{t+2} \qbin{t+1}{j}q^{(t+1-j)(t-j)}\qbin{n-2t-3}{t-j}\frac{q^{n-t-j-1}-2q^{t-j+1}+1}{q^{t-j+1}-1}+\theta_{t+2}.\label{vgl22}
	\end{align}
	On the other hand we have that
	\begin{align}\label{vgl32}
	|S_{2.1}|&=\theta_{2t+3}+\sum_{j=0}^{t} \qbin{t+3}{j+1}q^{(t+2-j)(t+1-j)}\qbin{n-2t-3}{t+1-j}.
	\end{align}
	From (\ref{vgl22}) and (\ref{vgl32}), it follows that $|S_{2.3}|-|S_{2.1}|$ is equal to 
	\begin{align}
\underbrace{\qbin{n-t-2}{t}+\theta_{t+2}-\theta_{2t+3}}_{=w_1} \nonumber+\sum_{j=0}^{t} q^{(t+1-j)(t-j)}\qbin{n-2t-3}{t-j} \qbin{t+1}{j}\frac{q^{t+3}-1}{q^{t-j+1}-1} w_2,
	\end{align}
	with 
	\begin{align*}
		w_2=\frac{q^{n-t-j-1}-2q^{t-j+1}+1}{q-1} -q^{2(t+1-j)} \frac{(q^{n-3t-3+j}-1)(q^{t+2}-1)}{(q^{j+1}-1)(q^{t+2-j}-1)}
	\end{align*}
	We will prove that $w_1\geq 0$ and $w_2\geq 0$, which proves that $|S_{2.3}|\geq |S_{2.1}|$ for $k= 2t+2$.
	\begin{align*}
	w_1=\qbin{n-t-2}{t}+\theta_{t+2}-\theta_{2t+3}&\overset{L. \ref{bounds}}{ \geq} \left(1+\frac{1}{q}\right) q^{(n-2t-2)t}+\theta_{t+2}-\frac{q^{2t+4}}{q-1}\\
	&\geq\frac{1}{q(q-1)}\left(  q^{(n-2t-2)t+2}-q^{(n-2t-2)t} -q^{2t+5} \right)+\theta_{t+2}
	\end{align*}
	It is sufficient to prove that  $q^{(n-2t-2)t+2}> q^{2t+5} $. This inequality holds for $n> 2t+4+\frac 3t$. For $t>1$ this assumption holds since $n>2k-t=3t+4$. If $t=1$ and $n> 9$ we also find that $q^{(n-2t-2)t+2}> q^{2t+5} $. For $n=9$ and $t=1$, we find that $w_1=\theta_3>0$. In the last remaining case; $n=8, t=1$, we have that $w_1<0$. For this case, we used a computer algebra packet to calculate both numbers $|S_{2.3}|, |S_{2.1}|$ to  see that $|S_{2.3}|\geq |S_{2.1}|$.
	\begin{align*}
	&w_2=\frac{q^{n-t-j-1}-2q^{t-j+1}+1}{q-1} -q^{2(t+1-j)} \frac{(q^{n-3t-3+j}-1)(q^{t+2}-1)}{(q^{j+1}-1)(q^{t+2-j}-1)}\\
	&  =\frac{\left(q^{n-j+1}+q^{n-t-j}-q^{n-t}-q^{n-2j+1}\right)+\left(q^{3t-2j+5}-2q^{2t-j+4}-q^{3t-2j+4}\right)}{(q-1)(q^{j+1}-1)(q^{t+2-j}-1)}\\
	& \hspace{3cm} + \frac{\left(2q^{t+2}-2q^{t-j+1}\right)+\left(q^{t+3}-q^{j+1}-q^{t-j+2}\right)+q^{2t-2j+2}+q^{2t-2j+3}+1}{(q-1)(q^{j+1}-1)(q^{t+2-j}-1)}
	\end{align*}
	For $0\leq j\leq t$, we find that both the nominator and denominator are positive, since we have that $q\geq 3$. So $w_2\geq 0$.
	Hence, we have that $|S_{2.3}|> |S_{2.1}|$.
\end{proof}

\begin{lemma}\label{appendixaffienverschil1}
	Let $n>2k-t, q\geq 3$ and consider Example \ref{example} and Example \ref{example2} in $\AG(n,q)$. The number of elements in Example \ref{example} is larger than the number of elements in Example \ref{example2}  if $k>2t+1$.
\end{lemma}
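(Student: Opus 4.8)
The plan is to argue by contradiction and reduce the comparison of the two cardinalities to a single polynomial inequality in $q$, in complete analogy with the projective Lemma \ref{appendixprojverschil1}. The only structural change is that the constants $\theta_{k+1}$ and $\theta_{t+2}$ appearing there are replaced by $\theta_k$ and $\theta_{t+1}$, and this one-unit shift in the exponents is precisely what lowers the threshold from $k>2t+2$ to $k>2t+1$.

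First I would write $S_{a1}$ and $S_{a2}$ for the sets of Example \ref{example} and Example \ref{example2}, and assume for contradiction that $|S_{a2}|\geq|S_{a1}|$. Using the summation form \eqref{Saffien1somformule} of $|S_{a1}|$, I would discard the leading $\theta_k$ and all terms of the sum except $j=0$, which gives the strict lower bound $|S_{a1}|>\theta_{k-t}q^{(k-t)(k-t-1)}\qbin{n-k-1}{k-t-1}$ (the sum contains this term because $k\geq t+2$). Together with the assumption and the closed form of $|S_{a2}|$, this yields
\[
\qbin{n-t-2}{k-t-2}\left(1+\theta_{t+1}q^{k-t-1}\frac{q^{n-k}-1}{q^{k-t-1}-1}\right) > \theta_{k-t}\,q^{(k-t)(k-t-1)}\qbin{n-k-1}{k-t-1}.
\]

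Next I would invoke Lemma \ref{bounds}: bound $\qbin{n-t-2}{k-t-2}\leq 2q^{(k-t-2)(n-k)}$ from above on the left, and $\theta_{k-t}\geq\left(1+\frac{1}{q}\right)q^{k-t}$ together with $\qbin{n-k-1}{k-t-1}\geq\left(1+\frac{1}{q}\right)q^{(k-t-1)(n-2k+t)}$ from below on the right (the latter is valid since $n>2k-t$ and $k-t-1\geq 1$). The crucial bookkeeping point is that, after dividing both sides by $q^{(k-t-2)(n-k)}$, the right-hand exponent $k-t+(k-t)(k-t-1)+(k-t-1)(n-2k+t)-(k-t-2)(n-k)$ collapses to exactly $n-t$; this is a one-line computation using $n-2k+t=(n-k)-(k-t)$. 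Clearing the positive denominator $(q-1)(q^{k-t-1}-1)$ and substituting $\theta_{t+1}(q-1)=q^{t+2}-1$ then reduces everything to a purely polynomial inequality in $q$.

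The final and main step is to expand both sides into monomials, move everything to one side, and partition the resulting terms into a few groups each of which is non-positive for $q\geq 3$, $n>2k-t$ and $k>2t+1$. I expect this grouping to be the only genuine obstacle, since it requires choosing the pairings with care: the leading negative power $-q^{n+k-2t}$ must be balanced against $2q^{n+1}+q^{n-t+1}$, and $-2q^{k+1}$ against $2q^{k-t}+2$, with the remaining lower powers absorbed into two further non-positive blocks. The hypothesis $k\geq 2t+2$ is exactly what guarantees that each negative leading power dominates its positive partners; the argument is sharp in the sense that at $k=2t+1$ it no longer applies, consistently with Example \ref{example2} being the larger set in that regime. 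Since the assumption $|S_{a2}|\geq|S_{a1}|$ has thereby forced a strictly positive quantity to be non-positive, this contradiction establishes $|S_{a1}|>|S_{a2}|$ whenever $k>2t+1$.
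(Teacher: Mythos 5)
Your proposal is correct and follows essentially the same route as the paper's own proof: contradiction, isolating the $j=0$ term of the summation form of $|S_{a1}|$, applying the upper bound $\qbin{n-t-2}{k-t-2}\leq 2q^{(k-t-2)(n-k)}$ and the lower bounds $\theta_{k-t},\qbin{n-k-1}{k-t-1}\geq\left(1+\frac{1}{q}\right)q^{(\cdot)}$ from Lemma \ref{bounds}, collapsing the exponent to $n-t$, clearing denominators, and grouping monomials exactly as in the paper (e.g.\ $2q^{n+1}+q^{n-t+1}-q^{n+k-2t}$ and $2q^{k-t}+2-2q^{k+1}$). No gaps.
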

\begin{proof}
	Let $R_{3.1}$ and $R_{3.3}$ be the set of elements in Example \ref{example} and in Example \ref{example2} respectively.
	Suppose that $k>2t+1$. Then we have to prove that $|R_{3.3}|<|R_{3.1}|$. Suppose to the contrary that $|R_{3.3}|\geq |R_{3.1}|$. Then 
	\begin{align*}
	&\qbin{n-t-2}{k-t-2} \left( 1+ \theta_{t+1}q^{k-t-1}\frac{q^{n-k}-1}{q^{k-t-1}-1}  \right) \geq \theta_{k}+\sum_{j=0}^{k-t-2} \qbin{k-t+1}{j+1}q^{(k-t-j)(k-t-j-1)}\qbin{n-k-1}{k-t-j-1}\\
	&\xRightarrow{j=0} \qbin{n-t-2}{k-t-2} \left( 1+ \theta_{t+1}q^{k-t-1}\frac{q^{n-k}-1}{q^{k-t-1}-1}  \right) >  \theta_{k-t}q^{(k-t)(k-t-1)}\qbin{n-k-1}{k-t-1}\\
	&\xRightarrow{L. \ref{bounds}} 2q^{(k-t-2)(n-k)}\left( 1+ \theta_{t+1}q^{k-t-1}\frac{q^{n-k}-1}{q^{k-t-1}-1}  \right) > \left(1+\frac{1}{q}\right)^2 q^{k-t+(k-t)(k-t-1)+(k-t-1)(n-2k+t)}\\
	&\Rightarrow 2+ 2 \theta_{t+1} q^{k-t-1}\frac{q^{n-k}-1}{q^{k-t-1}-1}>\left(1+\frac{1}{q}\right)^2 q^{n-t}\\
	& \Rightarrow 2(q-1)(q^{k-t-1}-1)+ 2(q^{t+2}-1)q^{k-t-1}(q^{n-k}-1)>\left(1+\frac{1}{q}\right)^2 (q-1)(q^{k-t-1}-1)q^{n-t}\\
	&\Rightarrow 2q^{k-t}-2q^{k-t-1}-2q+2+2q^{n+1}-2q^{n-t-1}-2q^{k+1}+2q^{k-t-1}\\ & \qquad>\left(1+\frac{1}{q}\right)^2(q^{n+k-2t}-q^{n+k-2t-1}-q^{n-t+1}+q^{n-t})\\
	&\qquad\geq q^{n+k-2t}+q^{n+k-2t-1}-q^{n+k-2t-2}-q^{n+k-2t-3}-q^{n-t+1}-q^{n-t}+q^{n-t-1}+q^{n-t-2}\\
	&\Rightarrow \left(  2q^{n+1}+ q^{n-t+1} - q^{n+k-2t} \right)  +\left( q^{n+k-2t-2}+q^{n+k-2t-3}+q^{n-t}-q^{n+k-2t-1}  \right)\\
	&\qquad +\left( 2q^{k-t}+2-2q^{k+1}  \right)+(-2q-3q^{n-t-1}-q^{n-t-2}) >0
	\end{align*}
	In the left hand side of the last inequality all terms are at most zero for $k>2t+1$ and $q\geq 3$. Hence we find a contradiction which proves the statement.
\end{proof}

\begin{lemma}\label{appendixaffienverschil2}
	Let $n>2k-t, k>t+1,  q\geq 3$ and consider Example \ref{example} and Example \ref{example2} in $\AG(n,q)$. The number of elements in Example \ref{example2} is larger than the number of elements in Example \ref{example}  if $k<2t+1$.
\end{lemma}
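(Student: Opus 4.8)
The plan is to follow the projective argument of Lemma~\ref{appendixprojverschil2} almost verbatim, keeping track of two changes: the count for Example~\ref{example2} now carries $\theta_{t+1}$ instead of $\theta_{t+2}$, and the part of Example~\ref{example} lying inside $\langle\pi,\delta\rangle$ contributes $\theta_k-\theta_{k-t}=q^{k-t+1}\theta_{t-1}$ instead of $\theta_{k+1}-\theta_{k-t}=q^{k-t+1}\theta_{t}$. These two downward shifts are exactly what move the threshold from $k<2t+2$ to $k<2t+1$. Write $R_{3.1}$ and $R_{3.3}$ for the sizes of Example~\ref{example} and Example~\ref{example2}, and suppose for a contradiction that $|R_{3.3}|\le|R_{3.1}|$. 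I would combine the exact value of $|R_{3.3}|$ with the structural upper bound obtained by splitting Example~\ref{example} into $S_1$ and $S_2$: one has $|S_1|=\theta_k-\theta_{k-t}=q^{k-t+1}\theta_{t-1}$, while every element of $S_2$ passes through one of the $\theta_{k-t}$ many $(t+1)$-spaces through $\delta$ inside $\langle\pi,\delta\rangle$, so $|S_2|<\theta_{k-t}\qbin{n-t-1}{k-t-1}$. Hence the contrary assumption gives
\[
\qbin{n-t-2}{k-t-2}\left(1+\theta_{t+1}q^{k-t-1}\frac{q^{n-k}-1}{q^{k-t-1}-1}\right)<q^{k-t+1}\theta_{t-1}+\theta_{k-t}\qbin{n-t-1}{k-t-1}.
\]

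Next I would use Lemma~\ref{bounds} to reduce this to an inequality between sums of powers of $q$. On the left I discard the summand $\qbin{n-t-2}{k-t-2}$ and bound the remaining Gaussian coefficient below by $\left(1+\tfrac1q\right)q^{(n-k)(k-t-2)}$; on the right I bound $\qbin{n-t-1}{k-t-1}$ above by $2q^{(n-k)(k-t-1)}$. Both replacements weaken the inequality consistently, so the contrary assumption still forces the resulting (weaker) inequality. Clearing the denominators $q-1$ and $q^{k-t-1}-1$ and dividing by $q^{(n-k)(k-t-2)}$ produces a comparison identical in shape to the projective computation, but with $q^{t+2}-1$ replacing $q^{t+3}-1$ and $q^{t}-1$ replacing $q^{t+1}-1$. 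Expanding and transposing everything to one side, I would regroup the powers of $q$ exactly as in Lemma~\ref{appendixprojverschil2}; the only effect of the two shifts is that the dominant term is now $q^{n+1}$ rather than $q^{n+2}$.

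The final step is to show that every group is strictly positive, contradicting the strict inequality just derived. The groups not containing the term with the possibly negative exponent are immediate from $n>2k-t$, $t+1<k<2t+1$ and $q\ge3$: for instance $q^n-q^{k+1}-q^k>0$ because $k\ge t+2$ forces $n\ge k+3$, and $2q^{n-t+1}-2q^{n-k}>0$ because $k>t-1$. The main obstacle, as in the projective case, is the group
\[
q^{n+1}-2q^{n+k-2t}-q^{\,k-t+1-(n-k)(k-t-2)}(q^{t}-1)(q^{k-t-1}-1).
\]
Here the hypothesis $k<2t+1$ is exactly what is needed: it gives $n+k-2t\le n$, hence $2q^{n+k-2t}\le 2q^{n}$, while $(q^{t}-1)(q^{k-t-1}-1)<q^{k}$ and $(n-k)(k-t-2)\ge0$ bound the last term above by $q^{2k-t}$. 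Therefore the group exceeds $q^{n+1}-2q^{n}-q^{2k-t}=(q-2)q^{n}-q^{2k-t}\ge q^{n}-q^{2k-t}>0$, the last step using $q\ge3$ and $n>2k-t$. This contradicts the derived inequality and proves $|R_{3.3}|>|R_{3.1}|$; this clean estimate conveniently replaces the slightly more delicate sub-argument via $(n-k-1)(k-t-1)>0$ used in the projective proof.
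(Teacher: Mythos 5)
Your proposal is correct and follows essentially the same route as the paper's proof: the same contradiction set-up, the same upper bound $|R_{3.1}|<q^{k-t+1}\theta_{*}+\theta_{k-t}\qbin{n-t-1}{k-t-1}$ (the paper loosely uses $\theta_t$ where you use the exact $\theta_{t-1}$), the same applications of Lemma \ref{bounds}, and the same regrouping of powers of $q$. Your treatment of the dominant group is in fact slightly more careful than the paper's, since you explicitly dominate both subtracted terms $2q^{n+k-2t}$ (via $k\le 2t$) and $q^{k-t+1-(n-k)(k-t-2)}(q^{t}-1)(q^{k-t-1}-1)$ (via $(n-k)(k-t-2)\ge 0$ and $n\ge 2k-t+1$), whereas the paper's write-up only records the bound on the latter.
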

\begin{proof}
	Let $R_{3.1}$ and $R_{3.3}$ be the set of elements in Example \ref{example} and in Example \ref{example2} respectively.
	Suppose that $k<2t+1$. Then we have to prove that $|R_{3.3}|>|R_{3.1}|$. Suppose to the contrary that $|R_{3.3}|\leq |R_{3.1}|$. Then 
	\begin{align*}
	\qbin{n-t-2}{k-t-2} \left( 1+ \theta_{t+1}q^{k-t-1}\frac{q^{n-k}-1}{q^{k-t-1}-1}  \right) &\leq \theta_{k}+\sum_{j=0}^{k-t-2} \qbin{k-t+1}{j+1}q^{(k-t-j)(k-t-j-1)}\qbin{n-k-1}{k-t-j-1}\\ & <q^{k-t+1}\theta_t+ \theta_{k-t}\qbin{n-t-1}{k-t-1}\\
	\end{align*}
	The last inequality follows since $q^{k-t+1}\theta_t-1$ is the number of elements of $R_{3.1}$ contained in $\langle \pi, \delta \rangle$ but not containing $\delta$. The second term $\theta_{k-t}\qbin{n-t-1}{k-t-1}$ is the number of $k$-spaces through a $(t+1)$-space in $\langle \pi, \delta \rangle$ through $\delta$.
	\begin{align*}
	&\xRightarrow{L. \ref{bounds}} \left(1+\frac 1q\right)q^{(n-k)(k-t-2)}\left( \theta_{t+1}q^{k-t-1}\frac{q^{n-k}-1}{q^{k-t-1}-1}  \right) < q^{k-t+1} \theta_t+2\theta_{k-t} q^{(n-k)(k-t-1)}\\
	& \Rightarrow \left(1+\frac 1q \right) (q^{t+2}-1)(q^{n-k}-1) q^{k-t-1} < \frac{ (q^{t+1}-1)(q^{k-t-1}-1)}{q^{(n-k)(k-t-2)-k+t-1}}+2(q^{k-t+1}-1)(q^{k-t-1}-1)q^{n-k}\\
	&\Rightarrow q^{n+1}+q^{k-t-1}+q^{n}+q^{k-t-2}-q^{n-t-1}-q^{k+1}-q^{n-t-2}-q^{k}\\ & \qquad <2q^{n+k-2t}+2q^{n-k}-2q^{n-t-1}-2q^{n-t+1}+\frac{q^{k-t+1} (q^{t+1}-1)(q^{k-t-1}-1)}{q^{(n-k)(k-t-2)}}\\
	& \Rightarrow\left( q^{n+1}-2q^{n+k-2t}-{q^{k-t+1-(n-k)(k-t-2)} (q^{t+1}-1)(q^{k-t-1}-1)}\right) +\left( q^{n}-q^{n-k}-q^{k+1} -q^k\right) \\ &\qquad + \left(  2q^{n-t-1}-q^{n-t-1}-q^{n-t-2}\right) +\left(2q^{n-t+1}+q^{k-t-1}+q^{k-t-2}-q^{n-k}\right)<0
	\end{align*}
	Now, the contradiction follows since all terms in the left hand side of the last inequality are positive. For the last three terms this follows immediately since $n>2k-t,k< 2t+1, k>t+1, q\geq 3$.  We end this proof by proving that the first term is also positive. Since $k\geq t+2$ and $n>2k-t=k+(k-t)\geq k+2$ we have that
	\begin{align*}
	&1<(n-k-1)(k-t-1)\\
	&\Leftrightarrow  n+1 > 2k-t+1-(n-k)(k-t-2)\\
	&\Rightarrow	q^{n+1}>q^{2k-t+1-(n-k)(k-t-2)}>q^{k-t+1-(n-k)(k-t-2)} (q^{t+1}-1)(q^{k-t-1}-1).
	\end{align*}
\end{proof}

\begin{lemma}\label{appendixaffienverschil3}
	Let $n>2k-t, q\geq 3$, and consider Example \ref{example} and Example \ref{example2} in $\AG(n,q)$. The number of elements in Example \ref{example2} is at least  the number of elements in Example \ref{example} if $k=2t+1$.
\end{lemma}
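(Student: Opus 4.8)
The plan is to follow the strategy of the proof of Lemma~\ref{appendixprojverschil3}, which handles the completely analogous projective boundary case $k=2t+2$; in the affine setting the cross-over value is $k=2t+1$, so that $k-t=t+1$. Because this is exactly the value of $k$ where the two families are of comparable size, the crude estimates through Lemma~\ref{bounds} that sufficed in Lemmas~\ref{appendixaffienverschil1} and~\ref{appendixaffienverschil2} are too weak here, and an exact count of both examples is required.

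First I would produce a refined count of $|R_{3.3}|$. Fix the projective extension $\tilde\Gamma$, a $(t+2)$-space, together with an auxiliary $t$-space $\tau$ in $\PG(n,q)$ disjoint from $\tilde\Gamma$, and split the elements of $\tilde{R}_{3.3}$ that meet $\tilde\Gamma$ in a $(t+1)$-space according to $i=\dim(\tilde\alpha\cap\tau)$. As each $\sigma\in\mathcal R$ satisfies $\tilde\sigma\nsubseteq H_\infty$, every projective $k$-space through $\tilde\sigma$ is automatically affine, so the completions can be counted purely projectively by repeated use of Lemma~\ref{lemmadisjunct}, exactly as in Lemma~\ref{appendixprojverschil3}, with two modifications: the number of relevant $(t+1)$-spaces is $|\mathcal R|=\theta_{t+1}$ rather than $\theta_{t+2}$, and, since now $k=2t+1$, the index $i$ runs over $\{-1,0,\dots,t-1\}$. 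This yields a closed summation formula for $|R_{3.3}|$, the affine analogue of equation~(\ref{vgl22}).

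Next I would specialise the summation formula~(\ref{Saffien1somformule}) for $|R_{3.1}|$ to $k=2t+1$, obtaining the affine analogue of~(\ref{vgl32}), and subtract the two expressions. As in the projective proof, I would reorganise $|R_{3.3}|-|R_{3.1}|$ into a single leading correction $w_1$ together with a sum $\sum_{j}(\text{positive factor})\,w_2$, where each factor is a product of a power of $q$, a Gaussian binomial, and a ratio of the form $\tfrac{q^{t+2}-1}{q^{t-j+1}-1}$. It then suffices to prove $w_1\ge 0$ and $w_2\ge 0$ separately, which gives $|R_{3.3}|\ge|R_{3.1}|$.

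For $w_1$ I would combine the lower bound $\qbin{n-t-2}{t-1}\ge\bigl(1+\tfrac{1}{q}\bigr)q^{(t-1)(n-2t-1)}$ with the upper bound $\theta_{2t+1}\le\tfrac{q^{2t+2}}{q-1}$ from Lemma~\ref{bounds}, reducing $w_1\ge 0$ to an elementary comparison of exponents that holds as soon as $n$ is not too close to $2k-t$. For $w_2\ge 0$ I would clear the (positive) denominators, expand, and verify that every resulting monomial has the right sign for $q\ge 3$, $n>2k-t$ and $0\le j\le t-1$. The main obstacle is the one already met in the projective case: the lower bound on $\qbin{n-t-2}{t-1}$ requires $t-1>0$, so the generic exponent estimate for $w_1$ degenerates for $t=1$ and, more generally, fails for finitely many small parameter values with $n$ near its lower bound. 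These exceptional instances have to be disposed of by a direct computation (by hand or with a computer algebra system), after which $|R_{3.3}|\ge|R_{3.1}|$ holds throughout the stated range.
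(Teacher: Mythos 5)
Your proposal follows essentially the same route as the paper's proof: an exact count of both families via an auxiliary subspace disjoint from $\Gamma$ and repeated use of Lemma \ref{lemmadisjunct}, reorganisation of the difference $|R_{3.3}|-|R_{3.1}|$ into a leading term $w_1$ plus a sum of positive factors times $w_2$, separate verification that $w_1\geq 0$ and $w_2\geq 0$ via Lemma \ref{bounds}, and direct computation of the small exceptional cases (the paper handles $t=1$, where the two sizes are in fact equal, and $t=2$ with $n\in\{9,10\}$). The only slip is that the auxiliary space $\tau$ should be a $(t-1)$-space rather than a $t$-space, which is exactly what makes your stated index range $i\in\{-1,0,\dots,t-1\}$ correct and aligns the resulting sum with the formula (\ref{Saffien1somformule}) for $|R_{3.1}|$.
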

\begin{proof}
	Let $R_{3.1}$ and $R_{3.3}$ be the set of elements in Example \ref{example} and in Example \ref{example2} respectively.
	Suppose that $k= 2t+1$, then we have to prove that $|R_{3.3}|\geq |R_{3.1}|$.
	Consider the set $R_{3.3}$, and let $\tau$ be a $(t-1)$-space in $\AG(n,q)$, disjoint from $\Gamma$. Then we find that $|R_{3.3}|=\qbin{n-t-2}{t-1}+\sum_{i=-1}^{t-1} R_i(n,2t+1,t)$, with $R_i(n,k,t)=\{ \alpha\in R_{3.3}| \alpha\cap \Gamma \in \mathcal{R}, \dim(\tilde{\alpha}\cap \tilde{\tau})=i  \}$. 
	Since there are $\theta_{t+1}$ affine $(t+1)$-spaces in $\mathcal{R}$, and $\qbin{t}{i+1}$ projective $i$-spaces in $\tau$ we have, by using Lemma \ref{lemmadisjunct}, that
	\begin{align}
	|R_{3.3}|&=\qbin{n-t-2}{t-1}+\sum_{i=-1}^{t-1} \theta_{t+1} \qbin{t}{i+1}\left(q^{(t-i-1)^2}\qbin{n-2t-1}{t-i-1}-q^{(t-i-1)(t-i-2)}\qbin{n-2t-2}{t-i-2}\right)\nonumber \\
	&=\qbin{n-t-2}{t-1}+\sum_{i=-1}^{t-2} \theta_{t+1} \qbin{t}{i+1}q^{(t-i-1)(t-i-2)}\qbin{n-2t-2}{t-i-2}\frac{q^{n-t-i-2}-2q^{t-i-1}+1}{q^{t-i-1}-1}+\theta_{t+1}\nonumber\\
	&=\qbin{n-t-2}{t-1}+\sum_{j=0}^{t-1} \theta_{t+1} \qbin{t}{j}q^{(t-j)(t-j-1)}\qbin{n-2t-2}{t-j-1}\frac{q^{n-t-j-1}-2q^{t-j}+1}{q^{t-j}-1}+\theta_{t+1}.
	\end{align}
	On the other hand we find have that 
	\begin{align}\label{vgl3}
	|R_{3.1}|&=\theta_{2t+1}+\sum_{j=0}^{t-1} \qbin{t+2}{j+1}q^{(t+1-j)(t-j)}\qbin{n-2t-2}{t-j}.
	\end{align}
	Hence, it follows that
	\begin{align}\label{vgl4}
	|R_{3.3}|-|R_{3.1}| =\underbrace{\qbin{n-t-2}{t-1}+\theta_{t+1}-\theta_{2t+1}}_{=w_1} \nonumber+\sum_{j=0}^{t-1} q^{(t-j)(t-j-1)}\qbin{n-2t-2}{t-j}\qbin{t}{j }(q^{t+2}-1) w_2,
	\end{align}
	with
	\begin{align*}
		w_2=\frac{q^{n-t-j-1}-2q^{t-j}+1}{(q-1)(q^{n-3t+j-1}-1)} -\frac{q^{t+1}-1}{(q^{j+1}-1)(q^{t-j+1}-1)}q^{2(t-j)}.
	\end{align*}
	We will prove that $w_1\geq 0$ and $w_2\geq 0$, which proves that $|R_{3.3}|\geq |R_{3.1}|$ for $k= 2t+1$.
	\begin{align*}
	w_1=\qbin{n-t-2}{t-1}+\theta_{t+1}-\theta_{2t+1}&\overset{L. \ref{bounds}}{ \geq} \left(1+\frac{1}{q}\right) q^{(n-2t-1)(t-1)}+\theta_{t+1}-\frac{q^{2t+2}}{q-1}\\
	&=\frac{1}{q(q-1)}\left(  q^{(n-2t-1)(t-1)+2}-q^{(n-2t-1)(t-1)} -q^{2t+3} \right)+\theta_{t+1}.
	\end{align*}
	
	It is sufficient to prove that  $q^{(n-2t-1)(t-1)+2}>q^{2t+3} $. This inequality holds for $n> 2t+\frac{3t}{t-1}$. For $t>2$ this assumption holds since $n>2k-t=3t+2$.
	For $t=2$, the assumption holds for $n>10$. For $t=2,n=10$, we have that 
		$w_1=\theta_5+\theta_3-\theta_5>0$.

	Since $n>2k-t=3t+2$, the only remaining cases,  are $t=2$ and $n=9$, and $t=1$ and $n>5$. In these cases, we immediately calculate  $|R_{3.3}|- |R_{3.1}|$. For $t=2, n=9$, we have that $|R_{3.3}|-|R_{3.1}|=q^9+2q^8+3q^7+2q^6+q^5>0$. For $t=1,  n>5,$ we have that $|R_{3.3}|=|R_{3.1}|=1+q\theta_2\theta_{n-4}$.
	Now we investigate $w_2$:

	
	\begin{align*}
	w_2&=\frac{q^{n-t-j-1}-2q^{t-j}+1}{(q-1)(q^{n-3t+j-1}-1)} -\frac{q^{t+1}-1}{(q^{j+1}-1)(q^{t-j+1}-1)}q^{2(t-j)}\\
	  &= \frac{(q^{j+1}-1)(q^{t-j+1}-1)(q^{n-t-j-1}-2q^{t-j}+1)-(q-1)(q^{n-3t+j-1}-1)(q^{t+1}-1)q^{2(t-j)}}{(q-1)(q^{n-3t+j-1}-1)(q^{j+1}-1)(q^{t-j+1}-1)} \\
	& =\frac{\left(q^{n-j}+q^{n-j-t}-q^{n-t}-q^{n-2j}\right)+\left(q^{3t-2j+2}-q^{3t-2j+1}-2q^{2t-j+2}\right)}{(q-1)(q^{n-3t+j-1}-1)(q^{j+1}-1)(q^{t-j+1}-1)}\\
& \qquad +\frac{\left(q^{t+2}+2q^{t+1}-q^{t-j+1}-2q^{t-j}-q^{j+1}\right)+q^{2t-2j+1}+q^{2t-2j}+1}{(q-1)(q^{n-3t+j-1}-1)(q^{j+1}-1)(q^{t-j+1}-1)}.
	\end{align*}
	As $0\leq j\leq t-1$ and $q\geq 3$ we find that all terms in the denominator are at least $0$, which proves that $w_2\geq 0$.
	Hence, we find that $|R_{3.3}|\geq |R_{3.1}|$.
	
\end{proof}

\begin{lemma}\label{lemmaongelijkheid2x} Suppose $n>2k+t+2, q\geq 3, k>t+1, t>0$, then
	\[(\theta_{k-t})^x\qbin{n-t-x}{k-t-x}\qbin{t+x+1}{t+1}<(\theta_{k-t})^2\qbin{n-t-2}{k-t-2}\qbin{t+3}{t+1} \] for all $x>2$.
\end{lemma}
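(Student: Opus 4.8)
The plan is to set
\[
g(x)=(\theta_{k-t})^x\qbin{n-t-x}{k-t-x}\qbin{t+x+1}{t+1},
\]
so that the assertion becomes $g(x)<g(2)$ for every integer $x$ with $2<x\le k-t$ (the upper bound $x\le k-t$ is forced by the coefficient $\qbin{n-t-x}{k-t-x}$, and is exactly the range relevant to the application, where $\psi(\mathcal{S}_p)=t+x\le k$ by Lemma \ref{remarks}). I would prove this by showing that $g$ is strictly decreasing on $\{2,3,\dots,k-t\}$, that is, $g(x+1)/g(x)<1$ for all $2\le x\le k-t-1$; the resulting chain $g(2)>g(3)>\cdots>g(k-t)$ then gives $g(x)<g(2)$ for all $x>2$. (For $k=t+2$ there is no such $x$ and the statement is vacuous, so one may assume $k\ge t+3$.)

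First I would compute the ratio explicitly. Using the standard recurrences $\qbin{a}{b}=\tfrac{q^a-1}{q^b-1}\qbin{a-1}{b-1}$ and $\qbin{m}{j}=\tfrac{q^m-1}{q^{m-j}-1}\qbin{m-1}{j}$, together with $\theta_{k-t}=\tfrac{q^{k-t+1}-1}{q-1}$, the three factors telescope to give
\[
\frac{g(x+1)}{g(x)}=\frac{q^{k-t+1}-1}{q-1}\cdot\frac{q^{k-t-x}-1}{q^{n-t-x}-1}\cdot\frac{q^{t+x+2}-1}{q^{x+1}-1}.
\]
Next I would extract the leading power of $q$. Writing each factor as $\tfrac{q^A-1}{q^B-1}=q^{A-B}\,\tfrac{1-q^{-A}}{1-q^{-B}}$, the product of the leading terms is $q^{(k-t)+(k-n)+(t+1)}=q^{2k-n+1}$, while the correction factor is at most $\big(\tfrac{q}{q-1}\big)^3$: every numerator factor $1-q^{-A}$ is below $1$, and every denominator factor satisfies $1-q^{-B}\ge 1-q^{-1}$, since the relevant exponents $1,\ n-t-x,\ x+1$ are all at least $1$. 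The hypothesis $n>2k+t+2$ with $t\ge1$ forces $n\ge 2k+4$, hence $2k-n+1\le -3$ and $q^{2k-n+1}\le q^{-3}$. Combining,
\[
\frac{g(x+1)}{g(x)}\le q^{-3}\Big(\frac{q}{q-1}\Big)^3=\frac{1}{(q-1)^3}\le\frac18<1
\]
for $q\ge 3$. This is precisely where both hypotheses enter: $n>2k+t+2$ drives the leading exponent down to $-3$, and $q\ge3$ lets $(q-1)^{-3}$ absorb the correction.

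The main (and essentially only) obstacle is the \emph{sharpness} of the estimate. A crude bound of the form ``numerator $<q^A$, denominator $>q^{B-1}$'' loses a factor $q^3$ and only yields $q^{2k-n+4}$, which need not be below $1$ when $t$ is small; so the argument would fail at the endpoint. The key is therefore to isolate the exact leading power $q^{2k-n+1}$ and to control the multiplicative corrections $1-q^{-e}$ precisely, which makes the ratio uniformly small across the whole range of $x$ at once. Once $g(x+1)/g(x)<1$ is established for $2\le x\le k-t-1$, strict monotonicity of $g$ on $\{2,\dots,k-t\}$ gives $g(x)<g(2)$ for every $x>2$, which is the claim.
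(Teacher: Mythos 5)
Your proof is correct, and it takes a genuinely different route from the paper's. The paper argues by contradiction and compares the term for general $x$ directly with the term for $x=2$: it replaces each factor by the crude estimates of Lemma \ref{bounds} (namely $\qbin{n}{k}\le 2q^{k(n-k)}$, $\qbin{n}{k}\ge(1+\frac1q)q^{k(n-k)}$ and $\theta_m\le \frac{q^{m+1}}{q-1}$), after which the assumed inequality collapses to $4>(1+\frac1q)^2(q-1)^{x-2}q^{(x-2)(n-2k-2)}$, which is false for $q\ge3$, $x>2$, $t>0$ and $n>2k+t+2$. You instead compute the exact ratio $g(x+1)/g(x)$ of consecutive terms via the Pascal-type recurrences, isolate the leading power $q^{2k-n+1}\le q^{-3}$, and bound the multiplicative correction by $\bigl(\tfrac{q}{q-1}\bigr)^3$, obtaining $g(x+1)/g(x)\le (q-1)^{-3}<1$ and hence strict monotonicity; your computations check out (in particular the telescoped ratio and the verification that all denominator exponents $1$, $n-t-x$, $x+1$ are at least $1$ on the range $2\le x\le k-t-1$). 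The two arguments rest on the same numerics --- each increment of $x$ costs roughly a factor $q^{n-2k-2}$, which the hypothesis $n>2k+t+2$ with $t>0$ makes large --- but yours bypasses the auxiliary approximation lemma entirely and yields the sharper quantitative conclusion that the sequence decreases by a factor of at least $(q-1)^3$ per step. One small remark: your restriction to $x\le k-t$ does not appear in the statement of the lemma, but it is harmless and in fact matches the paper, whose own proof implicitly requires $k-t-x\ge0$ for the bound on $\qbin{n-t-x}{k-t-x}$ to apply, and only the range $\psi(\mathcal{S}_p)=t+x\le k$ ever arises in the application.
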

\begin{proof}
	Suppose to the contrary that
	\begin{align*}
	&	(\theta_{k-t})^x\qbin{n-t-x}{k-t-x}\qbin{t+x+1}{t+1}\geq (\theta_{k-t})^2\qbin{n-t-2}{k-t-2}\qbin{t+3}{t+1}  \\
	&	\xRightarrow{L. \ref{bounds}} \frac{q^{(k-t+1)(x-2)}}{(q-1)^{x-2}}2^2 q^{(n-k)(k-t-x)+x(t+1)}> \left(1+ \frac 1q\right)^2 q^{(n-k)(k-t-2)+2(t+1)}\\
	& \Rightarrow 4 > \left(1+\frac 1q\right)^2 (q-1)^{x-2} q^{(x-2)(n-2k-2)}\\
		& \xRightarrow[x>2, t>0]{ n>2k+t+2}4 > \left(1+\frac 1q\right)^2 (q-1) q.
	\end{align*}
	The last inequality gives a contradiction for $q\geq 3$.
\end{proof}

\begin{lemma}\label{lemmaappendixlelijk}
	Suppose  {$ k>t+1$, $t>0$,} $q\geq 4$, and $n> 2k+t+2,$ (or $q=3$ and $n> 2k+t+3$),
	then
	\[(\theta_{k-t})^x\qbin{n-t-x}{k-t-x}\qbin{t+x+1}{t+1}< f_p(q,n,k,t) \] for all $x\geq2$.
\end{lemma}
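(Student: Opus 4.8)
The plan is to reduce to the single exponent $x=2$ and then estimate crudely with the Gaussian-coefficient bounds of Lemma \ref{bounds}. For every $x>2$ the hypotheses force $n>2k+t+2$ and $q\geq 3$, so Lemma \ref{lemmaongelijkheid2x} applies and shows that the left-hand side is strictly smaller than its value at $x=2$. Hence it suffices to treat $x=2$. Moreover $f_p(q,n,k,t)$ is by definition at least the size of Example \ref{examplep}, and in the sum expression for that size every summand is nonnegative, so keeping only the $j=0$ term (which exists since $k>t+1$ forces $k-t-2\geq 0$) gives
\[f_p(q,n,k,t)\geq \theta_{k-t}\,q^{(k-t)(k-t-1)}\qbin{n-k-1}{k-t-1}.\]

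Thus it suffices to prove
\[(\theta_{k-t})^2\qbin{n-t-2}{k-t-2}\qbin{t+3}{t+1}< \theta_{k-t}\,q^{(k-t)(k-t-1)}\qbin{n-k-1}{k-t-1},\]
and after cancelling one factor $\theta_{k-t}$ it remains to compare $\theta_{k-t}\qbin{n-t-2}{k-t-2}\qbin{t+3}{t+1}$ with $q^{(k-t)(k-t-1)}\qbin{n-k-1}{k-t-1}$. Here I would bound the left side from above by $\theta_{k-t}\leq q^{k-t+1}/(q-1)$ together with the upper bounds of Lemma \ref{bounds} for $\qbin{n-t-2}{k-t-2}$ and $\qbin{t+3}{t+1}=\qbin{t+3}{2}$ (the sharp $(1+2/q)q^{k(n-k)}$ bound when $q\geq 4$, the weaker $2q^{k(n-k)}$ bound when $q=3$), and bound the right side from below by $\qbin{n-k-1}{k-t-1}\geq (1+1/q)q^{(k-t-1)(n-2k+t)}$ (valid since $n>2k-t$ and $k>t+1$). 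Tracking exponents, the left side is at most a constant times $q^{(k+t+3)+(k-t-2)(n-k)}$ and the right side at least a constant times $q^{(k-t-1)(n-k)}$, so after dividing through by $q^{(k-t-2)(n-k)}$ everything collapses to the single inequality
\[q^{\,n-2k-t-3}>\frac{(q+2)^2}{q(q^2-1)}\ \ (q\geq 4),\qquad q^{\,n-2k-t-3}>\frac{4q}{q^2-1}\ \ (q=3).\]
For $q\geq 4$ the right-hand constant is $<1$, equivalently $q(q^2-1)>(q+2)^2$, which holds for $q\geq 4$; and $n>2k+t+2$ gives $n-2k-t-3\geq 0$. For $q=3$ the constant equals $3/2$, and $n>2k+t+3$ gives $n-2k-t-3\geq 1$, so $3^{\,n-2k-t-3}\geq 3>3/2$. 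This settles the $x=2$ case and hence the lemma.

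The delicate point I expect is that the exponent gap $n-2k-t-3$ vanishes at the boundary $n=2k+t+3$ that is allowed when $q\geq 4$, so the crude $q\geq 3$ estimate $\qbin{a}{b}\leq 2q^{b(a-b)}$ is too lossy there and the sharper $q\geq 4$ bound of Lemma \ref{bounds} is essential; the argument then hinges on the clean numeric fact $q(q^2-1)>(q+2)^2$. A secondary point is the choice to bound $f_p$ from below by Example \ref{examplep} rather than Example \ref{example2p}: already its leading $j=0$ term dominates the left-hand side uniformly in $k>t+1$, which is exactly what lets one estimate handle all cases at once.
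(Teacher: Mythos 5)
Your proof is correct, and it is structurally leaner than the paper's. After the common first step (reducing to $x=2$ via Lemma \ref{lemmaongelijkheid2x}), the paper splits into the two cases $k\leq 2t+2$ and $k>2t+2$ and, in each, compares the left-hand side against whichever of Example \ref{examplep} or Example \ref{example2p} actually realises the maximum $f_p$; this yields two separate computations, inequalities (\ref{eqeq}) and (\ref{eqeqeq}). You instead observe that since $f_p$ is a maximum it suffices to beat \emph{one} fixed candidate, and that the $j=0$ term $\theta_{k-t}q^{(k-t)(k-t-1)}\qbin{n-k-1}{k-t-1}$ of Example \ref{examplep} (which exists precisely because $k>t+1$) already dominates $(\theta_{k-t})^2\qbin{n-t-2}{k-t-2}\qbin{t+3}{t+1}$ uniformly in $k>t+1$ — the exponent bookkeeping $\bigl(k+t+3+(k-t-2)(n-k)\bigr)$ versus $(k-t-1)(n-k)$ never uses the relation between $k$ and $2t+2$. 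This makes the paper's inequality (\ref{eqeq}) superfluous. Your remaining computation coincides with the paper's treatment of (\ref{eqeqeq}): the same reduction to $q^{n-2k-t-3}$ exceeding an explicit constant, the same observation that the crude bound $\qbin{a}{b}\leq 2q^{b(a-b)}$ fails at the boundary $n=2k+t+3$ for $q=4$, and the same rescue via the sharper $(1+2/q)$ bound, culminating in $q(q^2-1)>(q+2)^2$ (the paper reaches the equivalent $(q+2)^2<(q-1)(q+1)q$). All constants and parameter ranges check out ($f_p\geq|S_{2.1}|$ by definition of the maximum; $\qbin{n-k-1}{k-t-1}\geq(1+1/q)q^{(k-t-1)(n-2k+t)}$ is legitimate since $n>2k-t$ and $k>t+1$), so the argument is complete. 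What your route buys is the elimination of one of the two appendix computations; what the paper's buys is nothing extra for this lemma, though its case split mirrors the case split already present in the definition of $f_p$.
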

\begin{proof}
	From Lemma \ref{lemmaongelijkheid2x} it follows that it is sufficient to prove the lemma for $x=2$. Hence we have to prove the following inequalities:
	\begin{align}
		&(\theta_{k-t})^2\qbin{n-t-2}{k-t-2}\qbin{t+3}{t+1} < \qbin{n-t-2}{k-t-2} \left( 1+ \theta_{t+2}q^{k-t-1}\frac{q^{n-k}-1}{q^{k-t-1}-1}  \right)  &\text{ for } k\leq 2t+2; \label{eqeq}\\
		&(\theta_{k-t})^2\qbin{n-t-2}{k-t-2}\qbin{t+3}{t+1} < \theta_{k+1}+\sum_{j=0}^{k-t-2} \qbin{k-t+1}{j+1}q^{(k-t-j)(k-t-j-1)}\qbin{n-k-1}{k-t-j-1}  &\text{ for } k> 2t+2. \label{eqeqeq}
	\end{align}
	We start by proving inequality (\ref{eqeq}). Suppose to the contrary that this inequality does not hold. Then we have that
	\begin{align*}
	&(\theta_{k-t})^2\qbin{t+3}{t+1} \geq 1+ \theta_{t+2}q^{k-t-1}\frac{q^{n-k}-1}{q^{k-t-1}-1}  \\
	&\xRightarrow{L. \ref{bounds}} \frac{q^{2k-2t+2}}{(q-1)^2}2q^{2t+2} > \frac{q^{t+3}-1}{q-1} q^{k-t-1} \frac{q^{n-k}-1}{q^{k-t-1}-1}\\
	&\xRightarrow{n>2k+t+2} 2q^{k+t+5}(q^{k-t-1}-1)> (q-1)(q^{t+3}-1)(q^{n-k}-1)\geq (q-1)(q^{t+3}-1)(q^{k+t+3}-1)\\
	& \Rightarrow 0> (q^{k+2t+7}-q^{k+2t+6}-2q^{2k+4})+(2q^{k+t+5}-q^{t+4}-q^{k+t+4}-1)+q^{t+3}+q^{k+t+3}+q.
	\end{align*}
All terms in the right hand side of the last inequality are non negative since $k\leq 2t+2$ and $q\geq 3$. Hence we have a contradiction which proves (\ref{eqeq}).

Now we prove inequality (\ref{eqeqeq}). Suppose again to the contrary that this inequality does not hold. Then we have that
\begin{align}
&(\theta_{k-t})^2\qbin{n-t-2}{k-t-2}\qbin{t+3}{t+1} \geq \theta_{k+1}+\sum_{j=0}^{k-t-2} \qbin{k-t+1}{j+1}q^{(k-t-j)(k-t-j-1)}\qbin{n-k-1}{k-t-j-1}\nonumber \\
&\xRightarrow{j=0} (\theta_{k-t})^2\qbin{n-t-2}{k-t-2}\qbin{t+3}{t+1} > \theta_{k+1}+ \theta_{k-t}q^{(k-t)(k-t-1)}\qbin{n-k-1}{k-t-1}\nonumber \\
&\Rightarrow \theta_{k-t}\qbin{n-t-2}{k-t-2}\qbin{t+3}{t+1} > q^{(k-t)(k-t-1)}\qbin{n-k-1}{k-t-1}\label{eqq}\\
&\xRightarrow{L. \ref{bounds}} \frac{q^{k-t+1}}{q-1}2q^{(n-k)(k-t-2)} 2q^{2t+2}>q^{(k-t)(k-t-1)} \left(1+\frac 1q \right) q^{(n-2k+t)(k-t-1)} \nonumber\\
&\Leftrightarrow 4>(q-1)\left(q+1\right) q^{n-2k-t-4}. \nonumber
\end{align}
For $n\geq 2k+t+4$ we have that the last inequality gives a contradiction for $q\geq 3$. 
For $n=2k+t+3$ we find a contradiction for $q\geq 5$. Hence, we still have to prove inequality (\ref{eqeqeq}) for $n=2k+t+3$ and $q=4$.

For $n=2k+t+3$ and $q=4$ it follows, by using Lemma \ref{bounds} in equation (\ref{eqq}), that
\begin{align*}
	&\frac{q^{k-t+1}}{q-1}\left(1+\frac 2q\right)q^{2t+2} \left(1+\frac 2q\right) q^{(n-k)(k-t-2)}>q^{(k-t)(k-t-1)} \left(1+\frac 1q \right) q^{(n-2k+t)(k-t-1)} \\
	&\Leftrightarrow \left(q+2\right)^2>(q-1)\left(q+1\right)q.
\end{align*}
This gives a contradiction for $q=4$.
	
\end{proof}

\begin{lemma}\label{lemmaongelijkheid2xaffien} Suppose $n>2k+t+2, q\geq 3, k>t+1, t>0$, then
	\[(\theta_{k-t})^x\qbin{n-t-x}{k-t-x}q^x\qbin{t+x}{x}<(\theta_{k-t})^2\qbin{n-t-2}{k-t-2}q^2\qbin{t+2}{2} \] for all $x>2$.
\end{lemma}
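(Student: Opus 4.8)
The plan is to argue by contradiction, following verbatim the strategy of the projective counterpart Lemma \ref{lemmaongelijkheid2x}, since the two statements differ only in the "small" combinatorial factors on each side.

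First I would dispose of the trivial range. If $x>k-t$, then $k-t-x<0$, so $\qbin{n-t-x}{k-t-x}=0$ and the left-hand side vanishes while the right-hand side is positive; the inequality then holds automatically. Hence I may assume $2<x\le k-t$, which forces $k\ge t+3$ and in particular $k-t-2\ge 1$. This is exactly the range in which the lower bound $\qbin{n-t-2}{k-t-2}\ge \bigl(1+\tfrac1q\bigr)q^{(k-t-2)(n-k)}$ of Lemma \ref{bounds} is valid, so no degenerate case obstructs the estimates below.

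Assuming for contradiction that the reverse inequality $\ge$ holds, I would cancel the common factor $(\theta_{k-t})^2$ and then apply Lemma \ref{bounds} to both sides. On the left I use $\theta_{k-t}\le \tfrac{q^{k-t+1}}{q-1}$, the upper bound $\qbin{n-t-x}{k-t-x}\le 2q^{(k-t-x)(n-k)}$, and the key estimate $q^x\qbin{t+x}{x}\le 2q^{x(t+1)}$. On the right I use $\qbin{n-t-2}{k-t-2}\ge \bigl(1+\tfrac1q\bigr)q^{(k-t-2)(n-k)}$ together with $q^2\qbin{t+2}{2}\ge \bigl(1+\tfrac1q\bigr)q^{2(t+1)}$ (legitimate since $t>0$). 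The crucial observation making this painless is that the affine factors $q^x\qbin{t+x}{x}$ and $q^2\qbin{t+2}{2}$ carry precisely the same leading exponents, $x(t+1)$ and $2(t+1)$, as the projective factors $\qbin{t+x+1}{x}$ and $\qbin{t+3}{2}$ in Lemma \ref{lemmaongelijkheid2x}; hence the exponent bookkeeping is literally identical to the projective computation.

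Collecting powers of $q$, the exponent of the bounded left-hand side minus that of the bounded right-hand side simplifies to $(x-2)(2k+2-n)=-(x-2)(n-2k-2)$, while the constant prefactors combine to $4/(q-1)^{x-2}$ against $\bigl(1+\tfrac1q\bigr)^2$. The assumption thus reduces to
\[
4 \ge \Bigl(1+\tfrac1q\Bigr)^2 (q-1)^{x-2}\,q^{(x-2)(n-2k-2)}.
\]
Since $x>2$ gives $x-2\ge 1$ and $n>2k+t+2$ with $t>0$ gives $n-2k-2\ge 1$, the right-hand side is minimised at $x-2=n-2k-2=1$, so it is at least $\bigl(1+\tfrac1q\bigr)^2(q-1)q$, which equals $\tfrac{32}{3}$ at $q=3$ and increases with $q$. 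As this exceeds $4$ for every $q\ge 3$, the displayed inequality is impossible, giving the contradiction. The main (and only modest) obstacle is verifying that the exponent identity $-(x-2)(n-2k-2)$ comes out exactly as in the projective lemma; once the exponent match between the affine and projective factors is recognised, the remainder is routine arithmetic.
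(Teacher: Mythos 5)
Your proposal is correct and follows essentially the same route as the paper: assume the reverse inequality, apply the bounds of Lemma \ref{bounds} to each factor, observe that the exponents collapse to $-(x-2)(n-2k-2)$, and reduce to $4\ge\bigl(1+\tfrac1q\bigr)^2(q-1)^{x-2}q^{(x-2)(n-2k-2)}$, which fails for $q\ge 3$. Your preliminary reduction to the range $2<x\le k-t$ is a small extra care (ensuring the lower bound on $\qbin{n-t-2}{k-t-2}$ applies) that the paper leaves implicit, but the core argument is identical.
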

\begin{proof}
	Suppose to the contrary that
	\begin{align*}
	&	(\theta_{k-t})^x\qbin{n-t-x}{k-t-x}q^x\qbin{t+x}{x}\geq (\theta_{k-t})^2\qbin{n-t-2}{k-t-2}q^2\qbin{t+2}{2}  \\
	&	\xRightarrow{L. \ref{bounds}} \frac{q^{(k-t+1)(x-2)}}{(q-1)^{x-2}}4 q^{(n-k)(k-t-x)+x+xt}> \left(1+ \frac 1q\right)^2 q^{(n-k)(k-t-2)+2+2t}\\
	& \Rightarrow 4 > \left(1+\frac 1q\right)^2 (q-1)^{x-2} q^{(x-2)(n-2k-2)}\\
	& \xRightarrow[x>2, t>0]{ n>2k+t+2}4 > \left(1+\frac 1q\right)^2 (q-1) q.
	\end{align*}
	The last inequality gives a contradiction for $q\geq 3$.
\end{proof}

\begin{lemma}\label{lemmaappendixaffienlelijk}
	Suppose Suppose  {$ k>t+1$, $t>0$,} $q\geq 4$, and $n> 2k+t+2,$ (or $q=3$ and $n> 2k+t+3$), then
	\[(\theta_{k-t})^x\qbin{n-t-x}{k-t-x}q^x\qbin{t+x}{x}< f_a(q,n,k,t) \] for all $x\geq2$.
\end{lemma}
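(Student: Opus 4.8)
The plan is to imitate the proof of the projective companion, Lemma~\ref{lemmaappendixlelijk}, step for step. The first move is to invoke Lemma~\ref{lemmaongelijkheid2xaffien}, which asserts that the left-hand side $(\theta_{k-t})^x\qbin{n-t-x}{k-t-x}q^x\qbin{t+x}{x}$ decreases strictly as $x$ grows beyond $2$; consequently it is enough to establish the inequality for $x=2$, that is, $(\theta_{k-t})^2\qbin{n-t-2}{k-t-2}q^2\qbin{t+2}{2}<f_a(q,n,k,t)$. Since $f_a$ is defined by a case split, I would treat the two regimes $k\leq 2t+1$ and $k>2t+1$ separately, in each case bounding $f_a$ from below by the size of the appropriate example and arguing by contradiction with the help of Lemma~\ref{bounds}.

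In the regime $k\leq 2t+1$ the value $f_a$ equals the size of Example~\ref{example2}, namely $\qbin{n-t-2}{k-t-2}\bigl(1+\theta_{t+1}q^{k-t-1}\frac{q^{n-k}-1}{q^{k-t-1}-1}\bigr)$. Cancelling the common factor $\qbin{n-t-2}{k-t-2}$ reduces the goal to $(\theta_{k-t})^2 q^2\qbin{t+2}{2}<1+\theta_{t+1}q^{k-t-1}\frac{q^{n-k}-1}{q^{k-t-1}-1}$. Assuming the opposite and inserting the estimates $\theta_{k-t}\leq\frac{q^{k-t+1}}{q-1}$ and $\qbin{t+2}{2}\leq 2q^{2t}$ of Lemma~\ref{bounds} on the left, while using $q^{n-k}-1\geq q^{k+t+3}-1$ (from $n>2k+t+2$) on the right, turns the assumption into a polynomial inequality in $q$ of the shape $2q^{k+t+5}(q^{k-t-1}-1)>(q-1)(q^{t+2}-1)(q^{k+t+3}-1)$; after collecting terms, every summand has a definite sign for $k\leq 2t+1$ and $q\geq 3$, and they combine to yield the contradiction. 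Note that the left-hand estimate coincides with the projective one, since $q^{2}\cdot 2q^{2t}=2q^{2t+2}$ matches the bound for $\qbin{t+3}{t+1}$, so only the replacement of $\theta_{t+2}$ by the smaller $\theta_{t+1}$ distinguishes this computation.

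In the regime $k>2t+1$ I would bound $f_a$ below by the summation form of Example~\ref{example}, keeping only its $j=0$ term, so that $f_a>\theta_{k-t}q^{(k-t)(k-t-1)}\qbin{n-k-1}{k-t-1}$. Arguing by contradiction, dividing through by $\theta_{k-t}$ and discarding the harmless term $\theta_k$, the problem becomes $\theta_{k-t}\qbin{n-t-2}{k-t-2}q^2\qbin{t+2}{2}<q^{(k-t)(k-t-1)}\qbin{n-k-1}{k-t-1}$. Substituting the bounds of Lemma~\ref{bounds} and cancelling the common power of $q$ collapses this to an inequality of the form $4>(q-1)(q+1)q^{n-2k-t-4}$; this is violated --- and hence gives the contradiction --- as soon as $n\geq 2k+t+4$ with $q\geq 3$, or $n=2k+t+3$ with $q\geq 5$, which covers almost the whole hypothesis.

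The only point that does not close under these crude estimates, and hence the main obstacle, is the single boundary case $q=4$, $n=2k+t+3$ inside the regime $k>2t+1$, exactly as in the projective proof. There I would replace the factor-of-two bound $\qbin{m}{\ell}\leq 2q^{\ell(m-\ell)}$ by the sharper two-sided bounds $\bigl(1+\tfrac1q\bigr)q^{\ell(m-\ell)}\leq\qbin{m}{\ell}\leq\bigl(1+\tfrac2q\bigr)q^{\ell(m-\ell)}$ of Lemma~\ref{bounds}, valid for $q\geq 4$; this reduces the obstruction to the numerical inequality $(q+2)^2>(q-1)(q+1)q$ at $q=4$, which reads $36>60$ and is false, delivering the contradiction. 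Should any finitely many residual small instances survive this analysis, they would be settled by a direct evaluation of both sides, precisely as at the end of the proof of Lemma~\ref{lemmaappendixlelijk}.
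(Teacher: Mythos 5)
Your proposal matches the paper's proof essentially step for step: the reduction to $x=2$ via Lemma \ref{lemmaongelijkheid2xaffien}, the case split according to the two branches of $f_a$, the same polynomial inequalities $2q^{k+t+5}(q^{k-t-1}-1)>(q-1)(q^{t+2}-1)(q^{k+t+3}-1)$ and $4>(q-1)(q+1)q^{n-2k-t-4}$, and the same treatment of the boundary case $n=2k+t+3$, $q=4$ via the sharper bound leading to $(q+2)^2>(q-1)(q+1)q$. This is the paper's argument and it is correct.
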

\begin{proof}
	From Lemma \ref{lemmaongelijkheid2xaffien} it follows that it is sufficient to prove the lemma for $x=2$. Hence we have to prove the following inequalities:
	\begin{align}
	&(\theta_{k-t})^2\qbin{n-t-2}{k-t-2}q^2\qbin{t+2}{2} < \qbin{n-t-2}{k-t-2} \left( 1+ \theta_{t+1}q^{k-t-1}\frac{q^{n-k}-1}{q^{k-t-1}-1}  \right)  &\text{ for } k\leq 2t+1; \label{eqeqa}\\
	&(\theta_{k-t})^2\qbin{n-t-2}{k-t-2}q^2\qbin{t+2}{2} < \theta_{k}+\sum_{j=0}^{k-t-2} \qbin{k-t+1}{j+1}q^{(k-t-j)(k-t-j-1)}\qbin{n-k-1}{k-t-j-1}  &\text{ for } k> 2t+1. \label{eqeqeqa}
	\end{align}
	We start by proving inequality (\ref{eqeqa}). Suppose to the contrary that this inequality doesn't hold. Then we have that
	\begin{align*}
	&(\theta_{k-t})^2\qbin{t+2}{2}q^2 \geq 1+ \theta_{t+1}q^{k-t-1}\frac{q^{n-k}-1}{q^{k-t-1}-1}  \\
	&\xRightarrow{L. \ref{bounds}} \frac{q^{2k-2t+2}}{(q-1)^2}2q^{2t+2} > \frac{q^{t+2}-1}{q-1} q^{k-t-1} \frac{q^{n-k}-1}{q^{k-t-1}-1}\\
	&\xRightarrow{n>2k+t+2} 2q^{k+t+5}(q^{k-t-1}-1)> (q-1)(q^{t+2}-1)(q^{n-k}-1)>(q-1)(q^{t+2}-1)(q^{k+t+3}-1)\\
	& \Rightarrow 0> (q^{k+2t+6}-q^{k+2t+5}-2q^{2k+4})+(2q^{k+t+5}-q^{t+3}-q^{k+t+4}-1)+q^{t+2}+q^{k+t+3}+q.
	\end{align*}
	All terms in the right hand side of the last inequality are non negative since $k\leq 2t+1$ and $q\geq 3$. Hence we have a contradiction which proves (\ref{eqeqa}).
	
	Now we prove inequality (\ref{eqeqeqa}). Suppose again to the contrary that this inequality doesn't hold. Then we have that
	\begin{align}
	&(\theta_{k-t})^2\qbin{n-t-2}{k-t-2}q^2\qbin{t+2}{2} \geq \theta_{k}+\sum_{j=0}^{k-t-2} \qbin{k-t+1}{j+1}q^{(k-t-j)(k-t-j-1)}\qbin{n-k-1}{k-t-j-1}\nonumber \\
	&\xRightarrow{j=0} (\theta_{k-t})^2\qbin{n-t-2}{k-t-2}q^2\qbin{t+2}{2} > \theta_{k}+ \theta_{k-t}q^{(k-t)(k-t-1)}\qbin{n-k-1}{k-t-1}\nonumber \\
	&\Rightarrow \theta_{k-t}\qbin{n-t-2}{k-t-2}q^2\qbin{t+2}{2} > q^{(k-t)(k-t-1)}\qbin{n-k-1}{k-t-1}\label{eqqa}\\
	&\xRightarrow{L. \ref{bounds}} \frac{q^{k-t+1}}{q-1}2q^{(n-k)(k-t-2)}2q^{2t+2}>q^{(k-t)(k-t-1)} \left(1+\frac 1q \right) q^{(n-2k+t)(k-t-1)} \nonumber\\
	&\Leftrightarrow 4>(q-1)\left(q+1\right) q^{n-2k-t-4}. \nonumber
	\end{align}
	For $n\geq 2k+t+4$ we have that the last inequality gives a contradiction for $q\geq 3$. 
	For $n=2k+t+3$ we find a contradiction for $q\geq 5$. Hence, we still have to prove inequality (\ref{eqeqeqa}) for $n=2k+t+3$ and $q=4$.
	
	For $n=2k+t+3$ and $q=4$ there follows, by using Lemma \ref{bounds} in equation (\ref{eqqa}) that
	\begin{align*}
	&\frac{q^{k-t+1}}{q-1}\left(1+\frac 2q\right)q^{2t+2} \left(1+\frac 2q\right) q^{(n-k)(k-t-2)}>q^{(k-t)(k-t-1)} \left(1+\frac 1q \right) q^{(n-2k+t)(k-t-1)} \\
	&\Leftrightarrow \left(q+2\right)^2>(q-1)\left(q+1\right)q.
	\end{align*}
	This gives a contradiction for $q=4$.
	
\end{proof}

\begin{lemma}\label{appendixgeennaammeer}
		Suppose $n>2k+t+2, q\geq 3, k>t+1, t>0$, then
	\[	2\qbin{n-t-1}{k-t-1}+ (\theta_{t+1}\theta_{k-t}-\theta_{t+1}-1) \theta_{k-t} \qbin{n-t-2}{k-t-2}< f_p(q,n,k,t). \] 
\end{lemma}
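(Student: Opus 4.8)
The plan is to bound the left-hand side against a single, conveniently chosen term of the size $A:=\theta_{k+1}-\theta_{k-t}+\qbin{n-t}{k-t}-q^{(k-t+1)(k-t)}\qbin{n-k-1}{k-t}$ of Example \ref{examplep}, thereby avoiding any case split on $k$ versus $2t+2$. Since $f_p(q,n,k,t)=\max\{A,B\}\ge A$ (with $B$ the size of Example \ref{example2p}), and since in the sum expression for $A$ every summand is positive, we have in particular $A>T$, where $T:=\theta_{k-t}\,q^{(k-t)(k-t-1)}\qbin{n-k-1}{k-t-1}$ is the $j=0$ summand (recall $\qbin{k-t+1}{1}=\theta_{k-t}$, and this summand is present because $k>t+1$ forces $k-t-2\ge 0$). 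Hence $f_p(q,n,k,t)\ge A>T$, and it suffices to prove the sharper inequality that the left-hand side is strictly below $T$.

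First I would discard the two negative contributions in the coefficient of the left-hand side: as $\theta_{t+1}\theta_{k-t}-\theta_{t+1}-1\le\theta_{t+1}\theta_{k-t}$, the left-hand side is at most $2\qbin{n-t-1}{k-t-1}+\theta_{t+1}\theta_{k-t}^2\qbin{n-t-2}{k-t-2}$. I would then feed both sides into Lemma \ref{bounds}: apply the upper bounds $\qbin{n}{k}\le 2q^{k(n-k)}$ and $\theta_n\le q^{n+1}/(q-1)$ to these two summands, and the lower bounds $\qbin{n}{k}\ge(1+\tfrac1q)q^{k(n-k)}$ together with $\theta_{k-t}>q^{k-t}$ to $T$. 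Writing $m:=k-t\ (\ge 2)$, the exponent carried by $T$ is $m+(m-1)(n-k)$, while the two pieces of the bounded left-hand side carry $q$-exponents $(m-1)(n-k)$ and, after absorbing the $\theta$-bounds, at most $t+2m+4+(m-2)(n-k)$ over a denominator $(q-1)^3$.

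The crux is the comparison of these exponents. Clearing the common factor $q^{(m-2)(n-k)}$ and then dividing by $q^{n-k}$ reduces the required inequality to one of the shape
\[
4+\frac{2\,q^{\,3k-t+4-n}}{(q-1)^3}<\Bigl(1+\tfrac1q\Bigr)q^{m}.
\]
The exponent $m$ on the right exceeds $3k-t+4-n$ by exactly $n-2k-4$, which is $\ge 0$ by the hypothesis $n>2k+t+2$ together with $t>0$, and the first term's exponent is smaller still. In the generic range this gap is strictly positive and the estimate is immediate for $q\ge 3$. The one delicate point, which I expect to be the main obstacle, is the tightest regime $t=1$, $n=2k+4$, where the inflated $\theta$-bound shrinks the exponent gap to $0$; there one must retain the denominator $(q-1)^3$ and the factor $1+\tfrac1q$, for which $1+\tfrac1q-\tfrac{2}{(q-1)^3}>1$ and $q^{m}\ge 9$ still force the strict inequality for all $q\ge 3$. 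Everything else is routine power-of-$q$ bookkeeping.
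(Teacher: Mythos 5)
Your proposal is correct, and it takes a genuinely shorter route than the paper. The paper proves the lemma by a case split on $k$ versus $2t+2$, matching the two branches in the definition of $f_p$: for $k\leq 2t+2$ it compares the left-hand side against the size of Example \ref{example2p} (inequality (\ref{eeq}), a separate polynomial manipulation), and for $k>2t+2$ against the size $A$ of Example \ref{examplep}, reduced to its $j=0$ summand $T=\theta_{k-t}q^{(k-t)(k-t-1)}\qbin{n-k-1}{k-t-1}$ (inequality (\ref{eeqeq})). Your key observation is that since $f_p=\max\{A,B\}\geq A>T$ unconditionally (the sum expression (\ref{Sproj1somnotatie}) for $A$ contains the positive term $T$ once $k>t+1$, plus $\theta_{k+1}>0$), it suffices to prove the single inequality $\mathrm{LHS}<T$ with no reference to which of $A,B$ realises the maximum; and indeed the paper's own chain for (\ref{eeqeq}) nowhere uses $k>2t+2$, only $k>t+1$, $t>0$, $q\geq 3$ and $n\geq 2k+t+3$. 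Your reduction to $4+\tfrac{2q^{3k-t+4-n}}{(q-1)^3}<(1+\tfrac1q)q^{m}$ with $m=k-t\geq 2$ checks out (the exponent gap $m-(3k-t+4-n)=n-2k-4\geq 0$ uses exactly $n\geq 2k+t+3$ and $t\geq 1$), and the uniform estimate $4+\tfrac{2q^m}{(q-1)^3}\leq 4+\tfrac{q^m}{4}<(1+\tfrac1q)q^m$ for $q\geq 3$, $m\geq 2$ closes it, including your extremal case $t=1$, $n=2k+4$. What your approach buys is the elimination of the entire $k\leq 2t+2$ branch; what it gives up is the slightly stronger information, implicit in the paper, that the left-hand side is below \emph{each} of the two example sizes in its own regime — but the lemma as stated, and its use in the main theorem, only require the bound against the maximum, so nothing is lost.
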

\begin{proof}
	We have to prove the following inequalities:
	\begin{align}
	&2\qbin{n-t-1}{k-t-1}+ (\theta_{t+1}\theta_{k-t}-\theta_{t+1}-1) \theta_{k-t} \qbin{n-t-2}{k-t-2} \nonumber \\ &\hspace{2cm} < \qbin{n-t-2}{k-t-2} \left( 1+ \theta_{t+2}q^{k-t-1}\frac{q^{n-k}-1}{q^{k-t-1}-1}  \right)   \hspace{2,34cm}\text{ for } k\leq 2t+2; \label{eeq}\\
	&2\qbin{n-t-1}{k-t-1}+ (\theta_{t+1}\theta_{k-t}-\theta_{t+1}-1) \theta_{k-t} \qbin{n-t-2}{k-t-2}\nonumber \\&\hspace{2cm} < \theta_{k+1}+\sum_{j=0}^{k-t-2} \qbin{k-t+1}{j+1}q^{(k-t-j)(k-t-j-1)}\qbin{n-k-1}{k-t-j-1}   \text{ for } k> 2t+2. \label{eeqeq}
	\end{align}
	We start by proving inequality (\ref{eeq}). Suppose to the contrary that this inequality doesn't hold. Then we have that 
	\begin{align*}
	&2\qbin{n-t-1}{k-t-1}+ (\theta_{t+1}\theta_{k-t}-\theta_{t+1}-1) \theta_{k-t} \qbin{n-t-2}{k-t-2} \geq \qbin{n-t-2}{k-t-2} \left( 1+ \theta_{t+2}q^{k-t-1}\frac{q^{n-k}-1}{q^{k-t-1}-1}  \right)  \\
	&\Leftrightarrow 2 \frac{q^{n-t-1}-1}{q^{k-t-1}-1}+(\theta_{t+1}\theta_{k-t}-\theta_{t+1}-1) \theta_{k-t} \geq 1+ \theta_{t+2}q^{k-t-1}\frac{q^{n-k}-1}{q^{k-t-1}-1}\\
	&\Rightarrow 2 (q^{n-t-1}-1)+(q^{k-t-1}-1)(\theta_{t+1}\theta_{k-t}-\theta_{t+1}-1) \theta_{k-t} \geq \theta_{t+2}q^{k-t-1}(q^{n-k}-1)\\
	&\xRightarrow{L. \ref{bounds}}  2 (q^{n-t-1}-1)(q-1)+(q^{k-t-1}-1)(q^{k-t+1}-1)\frac{q^{k+3}}{(q-1)^2} > (q^{t+3}-1)q^{k-t-1}(q^{n-k}-1)\\
	& \Rightarrow 2q^{n-t}-2q^{n-t-1}-2q+2+ \frac{q^{3k-2t+3}}{(q-1)^2}> q^{n+2}-q^{n-t-1}-q^{k+2}+q^{k-t-1}\\
	& \Rightarrow 0> \left(q^{n+2}- \frac{q^{3k-2t+3}}{(q-1)^2}-2q^{n-t}\right)+\left(q^{n-t-1}-q^{k+2}-2\right)+q^{k-t-1}+2q\\
		& \xRightarrow[k\leq 2t+2]{n\geq 2k+t+3} 0> \left(q^{2k+3}(q^{t+2}-2)- \frac{q^{2k+5}}{(q-1)^2}\right)+\left(q^{2k+2}-q^{k+2}-2\right)+q^{k-t-1}+2q.
	\end{align*}
	For $q\geq 3$, we have that all terms on the right hand side of the last inequality are non negative. Hence we find a contradiction, which proves (\ref{eeq}).
	
	Now we prove inequality (\ref{eeqeq}) for $k>2t+2$. Suppose again to the contrary that this inequality doesn't hold. Then we have that 
	\begin{align*}
	&2\qbin{n-t-1}{k-t-1}+ (\theta_{t+1}\theta_{k-t}-\theta_{t+1}-1) \theta_{k-t} \qbin{n-t-2}{k-t-2} \\ & \hspace{4cm}\geq  \theta_{k+1}+\sum_{j=0}^{k-t-2} \qbin{k-t+1}{j+1}q^{(k-t-j)(k-t-j-1)}\qbin{n-k-1}{k-t-j-1}  \\
	&\xRightarrow{j=0} 2\qbin{n-t-1}{k-t-1}+ (\theta_{t+1}\theta_{k-t}-\theta_{t+1}-1) \theta_{k-t} \qbin{n-t-2}{k-t-2} \geq  \theta_{k-t}q^{(k-t)(k-t-1)}\qbin{n-k-1}{k-t-1} \\
	&\xRightarrow{L. \ref{bounds}} 4q^{(n-k)(k-t-1)}+ \frac{q^{2k-t+4}}{(q-1)^3} 2 q^{(n-k)(k-t-2)} \geq  \theta_{k-t}q^{(k-t)(k-t-1)}\left( 1+\frac 1q \right) q^{(n-2k+t)(k-t-1)}  \\
	&\Rightarrow  4+ \frac{2}{q^{n-3k+t-4}(q-1)^3}   \geq  \theta_{k-t}\left( 1+\frac 1q \right) >\theta_{k-t}+4\\
	&\xRightarrow{n\geq 2k+t+3} \frac{2q^{k-2t+1}}{(q-1)^2}   >  q^{k-t+1}-1\\
	&\xRightarrow{q\geq 3} q^{k-2t+1} >  q^{k-t+1}-1.
	\end{align*}
	The last inequality gives a contradiction for $q\geq 3, t>0$.
\end{proof}

\begin{lemma}\label{appendixaffienbla}
	Suppose $n>2k+t+2, q\geq 3, k>t+1, t>0$, then
	\[2\qbin{n-t-1}{k-t-1}+ (\theta_{t+1}\theta_{k-t}-\theta_{t+1}-\theta_{k-t}) \theta_{k-t} \qbin{n-t-2}{k-t-2}< f_a(q,n,k,t). \] 
	\end{lemma}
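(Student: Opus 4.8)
The plan is to follow the proof of the projective analogue, Lemma~\ref{appendixgeennaammeer}, and to split the argument according to the two branches defining $f_a(q,n,k,t)$: the range $k\le 2t+1$, where $f_a$ is the size of Example~\ref{example2}, and the range $k>2t+1$, where $f_a$ is the size of Example~\ref{example}. In both branches I argue by contradiction, assuming that the left-hand side is at least $f_a$, and reduce to an elementary polynomial inequality in $q$ via the estimates of Lemma~\ref{bounds}.

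The range $k>2t+1$ costs almost no extra work. The left-hand side satisfies $(\theta_{t+1}\theta_{k-t}-\theta_{t+1}-\theta_{k-t})\theta_{k-t}\le\theta_{t+1}\theta_{k-t}^{2}$, so it is bounded above by $2\qbin{n-t-1}{k-t-1}+\theta_{t+1}\theta_{k-t}^{2}\qbin{n-t-2}{k-t-2}$, which is precisely the upper bound used for the projective left-hand side. On the other side I keep only the $j=0$ summand of the size of Example~\ref{example}, namely $\theta_{k-t}q^{(k-t)(k-t-1)}\qbin{n-k-1}{k-t-1}$, as a lower bound. The chain of estimates establishing inequality~(\ref{eeqeq}) after the reduction to $j=0$ uses only $k>t+1$, $t>0$, $q\ge 3$ and $n\ge 2k+t+3$, and never the stronger hypothesis $k>2t+2$; hence it applies verbatim here and produces the contradiction for every $k>2t+1$. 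In particular this covers $k=2t+2$, which lies in the affine Example~\ref{example} range although it sits in the projective Example~\ref{example2p} range.

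For $k\le 2t+1$ I compare the left-hand side with the size $\qbin{n-t-2}{k-t-2}\bigl(1+\theta_{t+1}q^{k-t-1}\tfrac{q^{n-k}-1}{q^{k-t-1}-1}\bigr)$ of Example~\ref{example2}. I first cancel the common factor $\qbin{n-t-2}{k-t-2}$ using $\qbin{n-t-1}{k-t-1}=\tfrac{q^{n-t-1}-1}{q^{k-t-1}-1}\qbin{n-t-2}{k-t-2}$, then clear the denominator $q^{k-t-1}-1$ and multiply by $q-1$ so that $\theta_{t+1}(q-1)=q^{t+2}-1$, and finally bound the remaining $\theta$-factors and Gaussian coefficients from above with Lemma~\ref{bounds}, which turns $(\theta_{t+1}\theta_{k-t}-\theta_{t+1}-\theta_{k-t})\theta_{k-t}$ into a term at most $\tfrac{q^{3k-2t+3}}{(q-1)^2}$. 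This converts the assumed inequality into a polynomial inequality of the form $0>G_1+G_2+G_3$, with $G_1=q^{n+1}-\tfrac{q^{3k-2t+3}}{(q-1)^2}-2q^{n-t}$, $G_2=q^{n-t-1}-q^{k+1}-2$ and $G_3=q^{k-t-1}+2q$. I then show each group is non-negative for $k\le 2t+1$, $q\ge 3$, $n\ge 2k+t+3$: for $G_1$ the exponent comparison $n+1\ge 2k+t+4>3k-2t+3$ (valid since $k\le 2t+1$, $t\ge 1$) together with $\tfrac{q^{k-2t}}{(q-1)^2}<1$ gives $\tfrac{q^{3k-2t+3}}{(q-1)^2}<q^{n-t}$ and hence $G_1>q^{n-t}(q^{t+1}-3)\ge 0$; the bounds $G_2\ge 0$ and $G_3\ge 0$ are immediate from $n-t-1\ge 2k+2$ and positivity. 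This contradicts the assumption and finishes the case.

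The main obstacle is that the affine margin is genuinely tighter than the projective one: the left-hand side carries the coefficient $\theta_{t+1}\theta_{k-t}-\theta_{t+1}-\theta_{k-t}$ rather than $\theta_{t+1}\theta_{k-t}-\theta_{t+1}-1$, and $f_a$ uses $\theta_{k}$ (respectively $\theta_{t+1}$) where $f_p$ uses $\theta_{k+1}$ (respectively $\theta_{t+2}$), losing a full factor of $q$ on the right. Consequently Lemma~\ref{appendixgeennaammeer} cannot simply be quoted; the delicate point is to check that the grouped terms $G_1,G_2,G_3$ remain non-negative precisely on the shifted range $k\le 2t+1$, one unit tighter than the projective range $k\le 2t+2$, and that the boundary value $n=2k+t+3$ is still admissible for all $q\ge 3$.
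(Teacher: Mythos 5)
Your proposal is correct and follows essentially the same route as the paper's own proof: a case split along the two branches of $f_a$, an argument by contradiction, retention of only the $j=0$ summand on the right, and a reduction via Lemma \ref{bounds} to the grouped polynomial inequality $0>G_1+G_2+G_3$ with exactly the paper's groupings. Your observations that the $k>2t+1$ computation never uses $k>2t+2$ (so it covers the shifted boundary case $k=2t+2$) and that $G_1\geq 0$ hinges on $3k-2t+3\leq n-t+1$ for $k\leq 2t+1$ match the paper's use of $n\geq 2k+t+3$ and $k\leq 2t+1$ in its final implication.
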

\begin{proof}
	We have to prove the following inequalities:
	\begin{align}
	2\qbin{n-t-1}{k-t-1}&+ (\theta_{t+1}\theta_{k-t}-\theta_{t+1}-\theta_{k-t}) \theta_{k-t} \qbin{n-t-2}{k-t-2} \nonumber \\ &< \qbin{n-t-2}{k-t-2} \left( 1+ \theta_{t+1}q^{k-t-1}\frac{q^{n-k}-1}{q^{k-t-1}-1}  \right)   \hspace{2cm}\text{ for } k\leq 2t+1; \label{eeeq}\\
	2\qbin{n-t-1}{k-t-1}&+ (\theta_{t+1}\theta_{k-t}-\theta_{t+1}-\theta_{k-t}) \theta_{k-t} \qbin{n-t-2}{k-t-2}\nonumber \\&< \theta_{k}+\sum_{j=0}^{k-t-2} \qbin{k-t+1}{j+1}q^{(k-t-j)(k-t-j-1)}\qbin{n-k-1}{k-t-j-1}   \text{ for } k> 2t+1. \label{eeeqeq}
	\end{align}
	We start by proving inequality (\ref{eeeq}). Suppose to the contrary that this inequality doesn't hold. Then we have that 
	\begin{align*}
	&2\qbin{n-t-1}{k-t-1}+ (\theta_{t+1}\theta_{k-t}-\theta_{t+1}-\theta_{k-t}) \theta_{k-t} \qbin{n-t-2}{k-t-2} \geq \qbin{n-t-2}{k-t-2} \left( 1+ \theta_{t+1}q^{k-t-1}\frac{q^{n-k}-1}{q^{k-t-1}-1}  \right)  \\
	&\Leftrightarrow 2 \frac{q^{n-t-1}-1}{q^{k-t-1}-1}+(\theta_{t+1}\theta_{k-t}-\theta_{t+1}-\theta_{k-t}) \theta_{k-t} \geq 1+ \theta_{t+1}q^{k-t-1}\frac{q^{n-k}-1}{q^{k-t-1}-1}\\
	&\Rightarrow 2 (q^{n-t-1}-1)+(q^{k-t-1}-1)(\theta_{t+1}\theta_{k-t}-\theta_{t+1}-\theta_{k-t}) \theta_{k-t} > \theta_{t+1}q^{k-t-1}(q^{n-k}-1)\\
	&\xRightarrow{L. \ref{bounds}}  2 (q^{n-t-1}-1)(q-1)+(q^{k-t-1}-1)(q^{k-t+1}-1)\frac{q^{k+3}}{(q-1)^2} > (q^{t+2}-1)q^{k-t-1}(q^{n-k}-1)\\
	& \Rightarrow 2q^{n-t}-2q^{n-t-1}-2q+2+ \frac{q^{3k-2t+3}}{(q-1)^2}> q^{n+1}-q^{n-t-1}-q^{k+1}+q^{k-t-1}\\
	& \Rightarrow 0> \left(q^{n+1}- \frac{q^{3k-2t+3}}{(q-1)^2}-2q^{n-t}\right)+\left(q^{n-t-1}-q^{k+1}-2\right)+q^{k-t-1}+2q\\
	& \xRightarrow[k\leq 2t+1]{n\geq 2k+t+3} 0> \left(q^{2k+3}(q^{t+1}-2)- \frac{q^{2k+4}}{(q-1)^2}\right)+\left(q^{2k+2}-q^{k+1}-2\right)+q^{k-t-1}+2q.
	\end{align*}
	For $q\geq 3$, we have that all terms on the right hand side of the last inequality are non negative. Hence we find a contradiction, which proves (\ref{eeeq}).
	
	Now we prove inequality (\ref{eeeqeq}). Suppose again to the contrary that this inequality doesn't hold. Then we have that 
	\begin{align*}
	&2\qbin{n-t-1}{k-t-1}+ (\theta_{t+1}\theta_{k-t}-\theta_{t+1}-\theta_{k-t} ) \theta_{k-t} \qbin{n-t-2}{k-t-2} \\&\hspace{4cm} \geq  \theta_{k}+\sum_{j=0}^{k-t-2} \qbin{k-t+1}{j+1}q^{(k-t-j)(k-t-j-1)}\qbin{n-k-1}{k-t-j-1}  \\
	&\xRightarrow{j=0} 2\qbin{n-t-1}{k-t-1}+ (\theta_{t+1}\theta_{k-t}-\theta_{t+1}-\theta_{k-t} ) \theta_{k-t} \qbin{n-t-2}{k-t-2} \geq  \theta_{k-t}q^{(k-t)(k-t-1)}\qbin{n-k-1}{k-t-1} \\
	&\xRightarrow{L. \ref{bounds}} 4q^{(n-k)(k-t-1)}+ \frac{q^{2k-t+4}}{(q-1)^3} 2 q^{(n-k)(k-t-2)} \geq  \theta_{k-t}q^{(k-t)(k-t-1)}\left( 1+\frac 1q \right) q^{(n-2k+t)(k-t-1)}  \\
	&\Rightarrow  4+ \frac{2}{q^{n-3k+t-4}(q-1)^3}   \geq  \theta_{k-t}\left( 1+\frac 1q \right) >\theta_{k-t}+4\\
	&\xRightarrow{n\geq 2k+t+3} \frac{2q^{k-2t+1}}{(q-1)^2}   >  q^{k-t+1}-1\\
	&\xRightarrow{q\geq 3} q^{k-2t+1} >  q^{k-t+1}-1.
	\end{align*}
	The last inequality gives a contradiction for $q\geq 3$.
\end{proof}

\begin{lemma}\label{appendixlaatste}
	Suppose that $n>2k+t+2, k>2t+2$, $2\leq x\leq k-t+1, t>0$. Then we have that 
	\begin{align*}
		 &\theta_{k+1}+\sum_{j=0}^{k-t-2} \qbin{k-t+1}{j+1}q^{(k-t-j)(k-t-j-1)}\qbin{n-k-1}{k-t-j-1}\\& \hspace{3cm}>\theta_{t+x}\qbin{n-t-x+1}{k-t-x+1}+\theta_{k-t}^{2}\qbin{n-t-2}{k-t-2}+\theta_{k-t-1}\qbin{n-t-1}{k-t-1}.
	\end{align*}
	
\end{lemma}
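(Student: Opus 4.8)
The plan is to reduce to the extreme value $x=2$ and then pit the leading term of the left-hand side against the three summands on the right. First I would note that among the three right-hand terms only $g(x):=\theta_{t+x}\qbin{n-t-x+1}{k-t-x+1}$ depends on $x$, so it suffices to bound the right-hand side by $g(2)$ once I know $g$ is maximized at $x=2$. This follows from
\[ \frac{g(x)}{g(x+1)}=\frac{\theta_{t+x}}{\theta_{t+x+1}}\cdot\frac{q^{n-t-x+1}-1}{q^{k-t-x+1}-1}, \]
together with $\frac{q^{n-t-x+1}-1}{q^{k-t-x+1}-1}>q^{n-k}>q+1>\frac{\theta_{t+x+1}}{\theta_{t+x}}$, the outer inequalities holding because $n-k>k+t+2$. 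Hence $g$ is strictly decreasing on $\{2,\dots,k-t+1\}$ and it is enough to prove, with $A=\theta_{t+2}\qbin{n-t-1}{k-t-1}$, $B=\theta_{k-t}^2\qbin{n-t-2}{k-t-2}$, $C=\theta_{k-t-1}\qbin{n-t-1}{k-t-1}$, that $\text{LHS}>A+B+C$.

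For the lower bound I would discard every term on the left except the $j=0$ summand $L_0=\theta_{k-t}\,q^{(k-t)(k-t-1)}\qbin{n-k-1}{k-t-1}$ and establish $L_0>A+B+C$. The decisive estimate links $A$ and $C$ to $L_0$. Setting $\rho=\qbin{n-t-1}{k-t-1}\big/\big(q^{(k-t)(k-t-1)}\qbin{n-k-1}{k-t-1}\big)$ and expanding both Gaussian coefficients as products gives
\[ \rho=q^{-(k-t)(k-t-1)}\prod_{i=0}^{k-t-2}\frac{q^{(k-t)+(n-k-1-i)}-1}{q^{n-k-1-i}-1}<\prod_{i=0}^{k-t-2}\big(1-q^{-(n-k-1-i)}\big)^{-1}, \]
and since the smallest exponent occurring is $n-2k+t+1\ge t+4$, this last product is a constant below $1.01$ for $q\ge3$ and $n>2k+t+2$. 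Combining this with the elementary fact $\theta_j/\theta_{k-t}<1/q$ for $j\le k-t-1$ (equivalent to $q>1$) yields $A/L_0,\,C/L_0<\rho/q$, so $A+C<\tfrac{2\rho}{q}L_0$. For $B$ the crude estimates of Lemma \ref{bounds} already suffice, giving $B/L_0\le \frac{2q}{(q-1)^2(q+1)}\,q^{\,2k-t+2-n}$, which is negligible because $2k-t+2-n\le -2t-1<0$. Assembling the pieces,
\[ \frac{A+B+C}{L_0}<\frac{2\rho}{q}+\frac{2q}{(q-1)^2(q+1)}\,q^{\,2k-t+2-n}<1\qquad(q\ge3), \]
which is the desired strict inequality since $L_0\le\text{LHS}$.

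The main obstacle, and the reason the routine machinery does not close the argument, is the tightness at $q=3$ and $k=2t+3$: there $A$ and $C$ are each comparable to $\tfrac1q L_0$, so $A+C$ is of the same order as $L_0$ and the inequality survives only because $2/q\le 2/3<1$. The factor-$2$ loss in the Gaussian bound of Lemma \ref{bounds} and the factor $q/(q-1)$ loss in its $\theta$-bound would each be fatal to this narrow margin, so the heart of the proof is the honest comparison of $\qbin{n-t-1}{k-t-1}$ with $q^{(k-t)(k-t-1)}\qbin{n-k-1}{k-t-1}$ through the near-$1$ constant $\rho$, rather than bounding the two coefficients crudely and separately.
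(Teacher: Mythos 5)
Your proof is correct, and its first two reductions coincide with the paper's: both show that $\theta_{t+x}\qbin{n-t-x+1}{k-t-x+1}$ decreases in $x$ (so only $x=2$ matters), and both then keep only the $j=0$ summand $L_0=\theta_{k-t}q^{(k-t)(k-t-1)}\qbin{n-k-1}{k-t-1}$ on the left. The difference lies in the closing estimate. The paper feeds all remaining quantities into the crude bounds of Lemma \ref{bounds} (a factor $2$ upward on each Gaussian coefficient, only $1+\tfrac1q$ downward), reduces to a polynomial inequality in $q$, and must then treat $k=2t+3$ as a separate sub-case because the margin nearly vanishes there. You instead compare $\qbin{n-t-1}{k-t-1}$ directly with $q^{(k-t)(k-t-1)}\qbin{n-k-1}{k-t-1}$ through the ratio $\rho<1.01$ and pair this with the exact inequalities $\theta_{t+2}/\theta_{k-t}<1/q$ and $\theta_{k-t-1}/\theta_{k-t}<1/q$ (the first is precisely where $k>2t+2$ enters, via $t+2\le k-t-1$), obtaining $A+C<\tfrac{2\rho}{q}L_0$ uniformly and making $B$ visibly negligible, with no case distinction. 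Your instinct about the fragility of the crude route is vindicated by the numbers: at $q=3$, $t=1$, $k=2t+3=5$, $n=2k+t+3=14$ the paper's reduced inequality compares $322.67$ against $322.98$ and therefore does not yield the desired contradiction (its final display effectively needs $n\ge 5t+10$ while the hypothesis only guarantees $n\ge 5t+9$), whereas your ratio estimate gives $(A+B+C)/L_0<0.69$ there. So your argument is not merely a different route to the same bound; the honest comparison via $\rho$ also repairs the one corner where the paper's own chain of crude estimates is too lossy.
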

\begin{proof}
	We first prove that $\theta_{t+x+1}\qbin{n-t-x}{k-t-x}<\theta_{t+x}\qbin{n-t-x+1}{k-t-x+1}$ for $2 \leq x< k-t+1$.
	\begin{align*}
	&\theta_{t+x+1}\qbin{n-t-x}{k-t-x}<\theta_{t+x}\qbin{n-t-x+1}{k-t-x+1}\\
	&\Leftrightarrow q^{t+x+2}-1<(q^{t+x+1}-1)\frac{q^{n-t-x+1}-1}{q^{k-t-x+1}-1}\\
	&\Leftrightarrow q^{k+3}-q^{t+x+2}-q^{k-t-x+1}<q^{n+2}-q^{n-t-x+1}-q^{t+x+1}\\
	&\Leftrightarrow -q^{t+x+2}-q^{k-t-x+1}<q^{n+2}-q^{n-t-x+1}-q^{t+x+1}-q^{k+3}.
	\end{align*}
	Note that the right hand side of the last inequality is positive for $q\geq 3$, which proves the inequality.

	Suppose now to the contrary that the inequality in the statement of the lemma doesn't hold. Then we have from the previous observation that 
	\begin{align*}
		&\theta_{k+1}+\sum_{j=0}^{k-t-2} \qbin{k-t+1}{j+1}q^{(k-t-j)(k-t-j-1)}\qbin{n-k-1}{k-t-j-1}\\ & \hspace{2.5cm}\leq\theta_{t+x}\qbin{n-t-x+1}{k-t-x+1}+\theta_{k-t}^{2}\qbin{n-t-2}{k-t-2}+\theta_{k-t-1}\qbin{n-t-1}{k-t-1}\\
		&\xRightarrow{j=0} \theta_{k-t}q^{(k-t)(k-t-1)}\qbin{n-k-1}{k-t-1}  < \theta_{t+2}\qbin{n-t-1}{k-t-1}+\theta_{k-t}^{2}\qbin{n-t-2}{k-t-2}+\theta_{k-t-1}\qbin{n-t-1}{k-t-1}\\
		&\xRightarrow{L. \ref{bounds}} \frac{q^{k-t+1}-1}{q-1}q^{(k-t)(k-t-1)} \left(1+\frac 1q\right)q^{(n-2k+t)(k-t-1)}\\ & \hspace{2.5cm} <\frac{q^{t+3}-1}{q-1}2q^{(n-k)(k-t-1)}+\frac{(q^{k-t+1}-1)^2}{(q-1)^2}2q^{(n-k)(k-t-2)}+\frac{q^{k-t}-1}{q-1}2q^{(n-k)(k-t-1)}\\
		&\Rightarrow (q^{k-t+1}-1)\left(1+\frac 1q\right) <2(q^{t+3}-1)+2\frac{(q^{k-t+1}-1)^2}{q^{n-k}(q-1)}+2(q^{k-t}-1)\\
		&\Rightarrow \left(q^{k-t+1}-2q^{t+3}-2q^{k-t}\right)+\left(q^{k-t}+3-\frac 1q-2\frac{(q^{k-t+1}-1)^2}{q^{n-k}(q-1)}\right)<0
	\end{align*}
	For $k>2t+3, q\geq 3$ and $n>2k+t+2$ both terms in the left hand side of the last inequality are non negative, which gives a contradiction.
	For $k=2t+3$ we have 
	\begin{align*}
		\left(q^{t+4}-3q^{t+3}\right)+\left(3-\frac 1q-2\frac{(q^{t+4}-1)^2}{q^{n-2t-3}(q-1)}\right)<0\\
		\xRightarrow{n\geq 5t+9}\left(q^{t+4}-3q^{t+3}\right)+\left(3-\frac 1q-2\frac{(q^{t+4}-1)^2}{q^{3t+7}(q-1)}\right)<0,\\
	\end{align*}
	which also gives a contradiction for $q\geq 3$ and $t>0$.
\end{proof}

\begin{lemma}\label{appendixlaatsteaffienextra}
	Suppose that $n>2k+t+2, k>2t+1$, $3\leq x\leq k-t+1, t>0, q\geq 3$. Then we have that  
	\begin{align*}
		&\theta_{k}+\sum_{j=0}^{k-t-2} \qbin{k-t+1}{j+1}q^{(k-t-j)(k-t-j-1)}\qbin{n-k-1}{k-t-j-1}\\ &\hspace{3cm}>\theta_{t+x}\qbin{n-t-x+1}{k-t-x+1}+\theta_{k-t}^{2}\qbin{n-t-2}{k-t-2}+\theta_{k-t-1}\qbin{n-t-1}{k-t-1}.
	\end{align*}

\end{lemma}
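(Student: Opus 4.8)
The plan is to follow the strategy used for Lemma \ref{appendixlaatste}, taking advantage of the fact that here the admissible range is $x\geq 3$ rather than $x\geq 2$. First I would record the monotonicity estimate
\[
\theta_{t+x+1}\qbin{n-t-x}{k-t-x}<\theta_{t+x}\qbin{n-t-x+1}{k-t-x+1},\qquad 2\leq x<k-t+1,
\]
which follows from the same computation as in the proof of Lemma \ref{appendixlaatste} (clear denominators and check that the resulting polynomial inequality is positive for $q\geq 3$). Since the two remaining summands $\theta_{k-t}^{2}\qbin{n-t-2}{k-t-2}$ and $\theta_{k-t-1}\qbin{n-t-1}{k-t-1}$ on the right-hand side do not depend on $x$, this shows that the whole right-hand side is strictly decreasing in $x$ on $\{3,\dots,k-t+1\}$. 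Hence it suffices to establish the inequality at the single value $x=3$, where the first term becomes $\theta_{t+3}\qbin{n-t-2}{k-t-2}$.

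For the base case $x=3$ I would argue by contradiction: assuming the inequality fails, I discard the $\theta_k$ term and every summand except $j=0$ on the left, keeping only
\[
\theta_{k-t}\,q^{(k-t)(k-t-1)}\qbin{n-k-1}{k-t-1}<\theta_{t+3}\qbin{n-t-2}{k-t-2}+\theta_{k-t}^{2}\qbin{n-t-2}{k-t-2}+\theta_{k-t-1}\qbin{n-t-1}{k-t-1}.
\]
Then I apply Lemma \ref{bounds}: the lower bound $\qbin{n-k-1}{k-t-1}\geq(1+\tfrac1q)q^{(k-t-1)(n-2k+t)}$ together with the exact values of the $\theta$'s on the left, and the upper bounds $\qbin{n-t-2}{k-t-2}\leq 2q^{(k-t-2)(n-k)}$ and $\qbin{n-t-1}{k-t-1}\leq 2q^{(k-t-1)(n-k)}$ on the right. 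The crucial feature, which is what makes the affine case cleaner than the projective one, is that both right-hand terms carrying $\qbin{n-t-2}{k-t-2}$ acquire a factor $q^{-(n-k)}$ relative to the dominant $q^{(k-t-1)(n-k)}$ scale.

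Dividing through by $\tfrac1{q-1}q^{(k-t-1)(n-k)}$ reduces everything to a one-variable inequality in $q$: the left becomes $(q^{k-t+1}-1)(1+\tfrac1q)$ and the right becomes $2(q^{k-t}-1)+q^{-(n-k)}\bigl[2(q^{t+4}-1)+\tfrac{2}{q-1}(q^{k-t+1}-1)^2\bigr]$. Moving the only un-suppressed right-hand term $2(q^{k-t}-1)$ to the left leaves $q^{k-t}(q-1)+1-\tfrac1q$, which must be shown to exceed the remaining $q^{-(n-k)}$-bracket. Using $n-k\geq k+t+3$ (from $n>2k+t+2$) and $k-t\geq t+2$ (from $k>2t+1$), the bracket is dominated by $q^{k-t}(q-1)$ for every $q\geq 3$ and $t>0$, yielding the contradiction. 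I expect the only technical point to be the exponent bookkeeping in this final step; unlike the projective Lemma \ref{appendixlaatste}, no separate boundary case (such as $k=2t+3$) should be required, precisely because at $x=3$ the large factor $\theta_{t+3}$ multiplies the smaller coefficient $\qbin{n-t-2}{k-t-2}$ and is therefore exponentially suppressed.
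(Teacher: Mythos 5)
Your proposal is correct and takes essentially the same route as the paper's proof: reduce to $x=3$ via the monotonicity inequality $\theta_{t+x+1}\qbin{n-t-x}{k-t-x}<\theta_{t+x}\qbin{n-t-x+1}{k-t-x+1}$ imported from Lemma \ref{appendixlaatste}, keep only the $j=0$ summand on the left, apply Lemma \ref{bounds}, and divide by $\frac{1}{q-1}q^{(k-t-1)(n-k)}$, with your final estimate differing from the paper's only by a trivial regrouping of terms. Your remark that no boundary case (analogous to $k=2t+3$ in the projective lemma) is needed, because at $x=3$ the $\theta_{t+3}$ term carries the suppressed coefficient $\qbin{n-t-2}{k-t-2}$, matches the paper exactly.
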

\begin{proof}
	From the proof of Lemma \ref{appendixlaatste}, we know that  $\theta_{t+x+1}\qbin{n-t-x}{k-t-x}<\theta_{t+x}\qbin{n-t-x+1}{k-t-x+1}$ for $3 \leq x< k-t+1$.
	
	Suppose now to the contrary that the inequality in the statement of the lemma doesn't hold. Then we have from the previous observation that 
	\begin{align*}
	&\theta_{k}+\sum_{j=0}^{k-t-2} \qbin{k-t+1}{j+1}q^{(k-t-j)(k-t-j-1)}\qbin{n-k-1}{k-t-j-1}\\ & \hspace{2.5cm}\leq\theta_{t+x}\qbin{n-t-x+1}{k-t-x+1}+\theta_{k-t}^{2}\qbin{n-t-2}{k-t-2}+\theta_{k-t-1}\qbin{n-t-1}{k-t-1}\\
	&\xRightarrow[x\geq 3]{j=0} \theta_{k-t}q^{(k-t)(k-t-1)}\qbin{n-k-1}{k-t-1}  < \theta_{t+3}\qbin{n-t-2}{k-t-2}+\theta_{k-t}^{2}\qbin{n-t-2}{k-t-2}+\theta_{k-t-1}\qbin{n-t-1}{k-t-1}\\
	&\xRightarrow{L. \ref{bounds}} \frac{q^{k-t+1}-1}{q-1}q^{(k-t)(k-t-1)} \left(1+\frac 1q\right)q^{(n-2k+t)(k-t-1)}\\ & \hspace{2.5cm} <\frac{q^{t+4}-1}{q-1}2q^{(n-k)(k-t-2)}+\frac{(q^{k-t+1}-1)^2}{(q-1)^2}2q^{(n-k)(k-t-2)}+\frac{q^{k-t}-1}{q-1}2q^{(n-k)(k-t-1)}\\
	&\Rightarrow (q^{k-t+1}-1)\left(1+\frac 1q\right) <2\frac{q^{t+4}-1}{q^{n-k}}+2\frac{(q^{k-t+1}-1)^2}{q^{n-k}(q-1)}+2(q^{k-t}-1)\\
	&\Rightarrow \left(q^{k-t+1}-2\frac{q^{t+4}-1}{q^{n-k}}-2q^{k-t}\right)+\left(q^{k-t}+1-\frac 1q-2\frac{(q^{k-t+1}-1)^2}{q^{n-k}(q-1)}\right)<0
	\end{align*}
	For $n>2k+t+2, q\geq 3$  both terms in the left hand side of the last inequality are non negative, which gives a contradiction.
\end{proof}

\begin{lemma}\label{appendixlaatsteaffien}
	Suppose that $n>2k+t+2, k>2t+1$ and $q\geq 3$. Then we have that 
	\begin{align*}
		 &\theta_{k}+\sum_{j=0}^{k-t-2} \qbin{k-t+1}{j+1}q^{(k-t-j)(k-t-j-1)}\qbin{n-k-1}{k-t-j-1}\\& \hspace{3cm}>q^2 \theta_{t-1}\qbin{n-t-1}{k-t-1}+\theta_{k-t}^{2}\qbin{n-t-2}{k-t-2}+\theta_{k-t-1}\qbin{n-t-1}{k-t-1}
	\end{align*}
	
\end{lemma}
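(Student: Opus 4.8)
The plan is to follow the same template as Lemmas \ref{appendixlaatste} and \ref{appendixlaatsteaffienextra}: assume the reverse inequality and derive a contradiction by retaining only the dominant $j=0$ term of the left-hand sum. First I would discard the positive summand $\theta_k$ together with every term of the sum except the one for $j=0$, namely $\qbin{k-t+1}{1}q^{(k-t)(k-t-1)}\qbin{n-k-1}{k-t-1}=\theta_{k-t}q^{(k-t)(k-t-1)}\qbin{n-k-1}{k-t-1}$. Since the discarded quantities are strictly positive, under the assumption that the claimed inequality fails this single term is still strictly smaller than the whole right-hand side, so it suffices to contradict
\[
\theta_{k-t}q^{(k-t)(k-t-1)}\qbin{n-k-1}{k-t-1}<q^2\theta_{t-1}\qbin{n-t-1}{k-t-1}+\theta_{k-t}^{2}\qbin{n-t-2}{k-t-2}+\theta_{k-t-1}\qbin{n-t-1}{k-t-1}.
\]

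Next I would apply Lemma \ref{bounds}: bound the left-hand $q$-binomial from below by $(1+\tfrac1q)q^{(k-t-1)(n-2k+t)}$ (legitimate since $n>2k-t$ and $k>t+1$), and each of the three right-hand coefficients from above by $2q^{(k-t-1)(n-k)}$, $2q^{(k-t-2)(n-k)}$ and $2q^{(k-t-1)(n-k)}$ respectively. The crucial bookkeeping is that the left-hand exponent collapses, $(k-t)(k-t-1)+(k-t-1)(n-2k+t)=(k-t-1)(n-k)$, so after dividing through by $q^{(k-t-1)(n-k)}$ the inequality becomes
\[
\theta_{k-t}\Bigl(1+\tfrac1q\Bigr)<2q^{2}\theta_{t-1}+2\theta_{k-t-1}+\frac{2\theta_{k-t}^{2}}{q^{\,n-k}}.
\]
Substituting $\theta_m=\tfrac{q^{m+1}-1}{q-1}$, clearing the factor $q-1$ and writing $m=k-t$, this reduces to the polynomial inequality $q^{m+1}-q^{m}-2q^{t+2}+2q^{2}+1-\tfrac1q-\mathrm{(tail)}\ge 0$ that I must establish, where the hypotheses $k>2t+1$ (hence $m\ge t+2$), $n>2k+t+2$ (hence $n-k\ge m+2t+3$) and $q\ge 3$ are to be used.

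Finally I would regroup so that the outcome is manifestly positive. Keeping $q^{m+1}-q^{m}=q^{m}(q-1)$ intact, I bound the tail by $\tfrac{2\theta_{k-t}^{2}}{q^{n-k}}<\tfrac{2q^{2m+2}}{(q-1)q^{n-k}}\le \tfrac{2q^{\,m-2t-1}}{q-1}$ using $n-k\ge m+2t+3$, and then show $q^{m}(q-1)-\tfrac{2q^{m-2t-1}}{q-1}-2q^{t+2}\ge -1$ while $2q^{2}+1-\tfrac1q>1$ for $q\ge3$, so the total is strictly positive. \emph{The main obstacle}, and the genuine difference from Lemma \ref{appendixlaatsteaffienextra}, is that the term $q^{2}\theta_{t-1}\qbin{n-t-1}{k-t-1}$ carries the full power $q^{(k-t-1)(n-k)}$ rather than a lower one, so it does not acquire a damping factor $q^{-(n-k)}$ and competes with the leading term at the \emph{same} order; the estimate is correspondingly tight in the extremal case $m=t+2$, $q=3$, where $q^{m}(q-1)-2q^{t+2}$ degenerates to $-3^{1-t}\ge -1$ and one must genuinely spend the lower-order positive terms $2q^{2}+1-\tfrac1q$ to close the gap. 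I therefore expect the delicate point to be resisting the temptation to bound $q^{m}\ge q^{t+2}$ prematurely, and instead carrying $q^{m}(q-1)$ through the grouping.
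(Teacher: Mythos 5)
Your proposal is correct and follows essentially the same route as the paper's proof: assume the contrary, retain only the $j=0$ term of the sum, apply Lemma \ref{bounds}, divide out $q^{(k-t-1)(n-k)}$, and verify the resulting polynomial inequality using $n-k\geq k+t+3$. The only difference is cosmetic: your final regrouping around $q^{k-t}(q-1)$ handles $k=2t+2$ uniformly, whereas the paper splits off that boundary case ($q^{t+3}-3q^{t+2}$ vanishing at $q=3$) and treats it separately.
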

\begin{proof}
	Suppose to the contrary that the inequality in the statement of the lemma doesn't hold. Then we have from the previous observation that 
	\begin{align*}
	&\theta_{k}+\sum_{j=0}^{k-t-2} \qbin{k-t+1}{j+1}q^{(k-t-j)(k-t-j-1)}\qbin{n-k-1}{k-t-j-1}\\ & \hspace{2.5cm}\leq q^2\theta_{t-1}\qbin{n-t-1}{k-t-1}+\theta_{k-t}^{2}\qbin{n-t-2}{k-t-2}+\theta_{k-t-1}\qbin{n-t-1}{k-t-1}\\
	&\xRightarrow{j=0} \theta_{k} +\theta_{k-t}q^{(k-t)(k-t-1)}\qbin{n-k-1}{k-t-1}  < q^2\theta_{t-1}\qbin{n-t-1}{k-t-1}+\theta_{k-t}^{2}\qbin{n-t-2}{k-t-2}+\theta_{k-t-1}\qbin{n-t-1}{k-t-1}\\
	&\xRightarrow{L. \ref{bounds}} \frac{q^{k-t+1}-1}{q-1}q^{(k-t)(k-t-1)} \left(1+\frac 1q\right)q^{(n-2k+t)(k-t-1)}\\ & \hspace{2.5cm} <\frac{q^{t}-1}{q-1}2q^{(n-k)(k-t-1)+2}+\frac{(q^{k-t+1}-1)^2}{(q-1)^2}2q^{(n-k)(k-t-2)}+\frac{q^{k-t}-1}{q-1}2q^{(n-k)(k-t-1)}\\
	&\xRightarrow{} (q^{k-t+1}-1)\left(1+\frac 1q\right) <2(q^{t+2}-q^2)+2\frac{(q^{k-t+1}-1)^2}{q^{n-k}(q-1)}+2(q^{k-t}-1)\\
	&\Rightarrow \left(q^{k-t+1}-2q^{t+2}-2q^{k-t}\right)+\left(q^{k-t}+1+2q^2-\frac 1q-2\frac{(q^{k-t+1}-1)^2}{q^{n-k}(q-1)}\right)<0
	\end{align*}
	For $k>2t+2, q\geq 3$ and $n>2k+t+2$ both terms in the left hand side of the last inequality are non negative, which gives a contradiction.
	For $k=2t+2$ we have that $n>2k+t+2=5t+6$, and so
	\begin{align*}
	\left(q^{t+3}-3q^{t+2}\right)+\left(2q^2+1-\frac 1q-2\frac{(q^{t+3}-1)^2}{q^{n-2t-2}(q-1)}\right)<0\\
	\xRightarrow{n\geq 5t+7}\left(q^{t+3}-3q^{t+2}\right)+\left(2q^2+1-\frac 1q-2\frac{(q^{t+3}-1)^2}{q^{3t+5}(q-1)}\right)<0,\\
	\end{align*}
	which also gives a contradiction for $q\geq 3$.
\end{proof}

\subsection*{Acknowledgements}
The research of Jozefien D'haeseleer is supported by the FWO (Research Foundation Flanders).

\end{document}